\documentclass[11pt,leqno]{amsart}
\usepackage{amscd}
\usepackage{color}
\usepackage[symbol]{footmisc}
\usepackage{amssymb}
\usepackage{amsfonts}
\usepackage{latexsym}
\usepackage{verbatim}
\usepackage{bbm}
\usepackage{mathrsfs}
 \usepackage[backref=page]{hyperref}
 \renewcommand*{\backref}[1]{}
\renewcommand*{\backrefalt}[4]{%
 	\ifcase #1 (Not cited).%
	\or        (Cited on page~#2).%
	\else      (Cited on pages~#2).%
	\fi}
\hypersetup{colorlinks=true,linkcolor=blue,citecolor=blue,urlcolor=black}
\newcommand{\N}{\mathbb{N}}

\newcommand{\R}{\mathbb{R}}
\newcommand{\C}{\mathbb{C}}
\newcommand{\f}{\varphi}
\renewcommand{\H}{\mathbb{H}}
\newcommand{\MA}{\mathrm{MA}}

\DeclareMathOperator{\QPSH}{\mathrm{PSH}^{\mathbb{H}}}
\renewcommand{\epsilon}{\varepsilon}
\renewcommand{\phi}{\varphi}
\renewcommand{\Re}{\mathrm{Re}}

\theoremstyle{plain}
\newtheorem{thm}{Theorem}[section]
\newtheorem{prop}[thm]{Proposition}
\newtheorem{lem}[thm]{Lemma}

\theoremstyle{plain}
\newtheorem{oss}[thm]{Remark}

\theoremstyle{definition}
\newtheorem{defn}[thm]{Definition}

\title[Continuous solutions to the quaternionic Monge--Amp\`ere equation]{Continuous solutions to the quaternionic Monge--Amp\`ere equation on locally flat HKT manifolds}

\begin{document}
 
\thanks{This work was supported by GNSAGA of INdAM, the first-named author was supported by the ANR--FAPESP project ANR-21-CE40-0017 {\it Bridges} and the DFF Sapere Aude grant \lq \lq Conformal geometry: metrics and cohomology''}
\subjclass[2020]{32U15, 35J96, 53C26, 32U05}

\address{(Giovanni Gentili) Aarhus University\\ Institut for Matematik\\
8000, Aarhus C\\ Denmark.}
\email{giovanni.gentili@math.au.dk}

\address{(Antonio Trusiani) Dipartimento di Matematica \\ Universit\`a di Roma Tor Vergata\\
00133 Roma\\ Italy.}
\email{trusiani@mat.uniroma2.it}

\address{(Luigi Vezzoni) Dipartimento di Matematica G. Peano \\ Universit\`a di Torino\\
Via Carlo Alberto 10\\
10123 Torino\\ Italy.}
\email{luigi.vezzoni@unito.it}

\author{Giovanni Gentili, Antonio Trusiani and Luigi Vezzoni}


\begin{abstract}
We prove that on a compact locally flat HKT manifold the quaternionic Monge--Amp\`ere equation always has a unique continuous solution. This provides new evidence for the Conjecture of Alesker and Verbitsky.
\end{abstract}

\maketitle

\section{Introduction}
Hyperk\"ahler manifolds with torsion (HKT manifolds) are special hypercomplex manifolds introduced by Howe and Papadopoulos in \cite{Howe-Papadopoulos (1996)} and then studied in several mathematical papers (see e.g.     
\cite{Alesker (2013),Alesker-Shelukhin (2017),Alesker-Verbitsky (2006),Alesker-Verbitsky (2010),Banos,Barberis-Fino,DinewSroka,dotti-fino2,GF,GentiliVezzoni,GLV,Grantcharov-Poon (2000),Ivanov,Lejmi-Weber,Sroka,Swann,Verbitsky (2002),Verbitsky (2007),Verbitsky (2009)} and the references therein).
A {\em hypercomplex manifold} is a smooth manifold $M$ equipped with a triple of complex structures $I,J,K$ satisfying the quaternionic-type relation 
\begin{equation}\label{IJK}
IJ=-JI=K\,.
\end{equation}
A hypercomplex manifold $(M,I,J,K)$ is HKT if it has a $2$-form $\Omega$ which is of type $(2,0)$ with respect to $I$ and satisfies  
\begin{enumerate}
\item[1.] the {\em q-reality condition} $\bar \Omega=J\Omega$; 

\vspace{0.1cm}
\item[2.] the {\em q-positivity condition} $\Omega(Z,J\bar Z)>0$ for every non-zero complex vector $Z$ which is of type $(1,0)$ with respect to $I$;

\vspace{0.1cm}
\item[3.] $\partial \Omega=0$, where  the operator $\partial$ is  computed with respect to $I$. 
\end{enumerate}
 
The HKT form $\Omega$ induces the Riemannian metric $g(X,Y)=\Re\, \Omega(X,JY)$ which is Hermitian with respect to each complex structure $I,J,K$.  

\medskip 
A central problem in HKT geometry is to seek \lq\lq special''  HKT structures in the  {\em quaternionic Bott--Chern cohomology class} of $\Omega$ 
$$
[\Omega]:=\{\Omega +\partial \partial_J\varphi\mid \varphi \in C^{\infty}(M)\}\,,
$$
where $\partial_J=J^{-1}\bar\partial J$, and $\partial$ and $\bar \partial$ are the usual Dolbeault operators with respect to $I$. A key ingredient in this direction is the following conjecture stated by Alesker and Verbitsky in \cite{Alesker-Verbitsky (2010)}, which is analogous to the Calabi conjecture \cite{Calabi,Yau} in the framework of quaternionic geometry:  

\medskip 
\noindent {\bf Conjecture (Alesker and Verbitsky \cite{Alesker-Verbitsky (2010)}).} {\em Let $(M^n,I,J,K,\Omega)$ be a compact HKT manifold such that $K_I(M)$ is holomorphically trivial. Then, for a given $F\in C^{\infty}(M)$ such that
\begin{equation*}\label{intF}
\int_M {\rm e}^F\,\Omega^n=\int_M \Omega^n \,,
\end{equation*}
the equation 
\begin{equation}\label{eq_QMA}
 (\Omega +\partial \partial_J \phi)^n={\rm e}^F\,\Omega^n\,,\quad \int_M \varphi \,\Omega^n=0 
\end{equation}
has a unique solution $\varphi\in C^{\infty}(M)$.} 

\medskip
In the statement of the conjecture $K_I(M)$ denotes the canonical bundle of $(M,I)$. We recall that the assumption on $K_I(M)$ to be holomorphically trivial implies the existence of a unique holomorphic volume form $\Theta$ on $(M,I)$ such that 
\begin{equation}\label{eq:normalisation}
\int_M\Omega^n \wedge \bar \Theta=1\,. 
\end{equation}
The form $\Theta$ always satisfies the q-positivity condition 
$$
\Theta(Z_1,JZ_1,Z_2,JZ_2,\dots, Z_{n},JZ_n)>0\mbox{ for every $(1,0)$-frame $\{Z_1,\dots,Z_n\}$ on $(M,I)$}\,,
$$
\cite[Thm 2.3]{Verbitsky (2007)}.

\medskip
\noindent In the statement of the conjecture and throughout the paper, we adopt the following notation:\\
\noindent \emph{Given a form $\Psi$ of type $(2n,0)$ on $(M,I)$, we write simply
$
\int_M \Psi
$
instead of $\int_M \Psi \wedge \overline{\Theta}.$}

\medskip 
One of the main geometric applications of the conjecture of Alesker and Verbitsky is the existence of Chern--Ricci flat HKT metrics on compact HKT  manifolds with holomorphically trivial canonical bundle. These metrics play the role in hypercomplex geometry that Calabi--Yau metrics play in K\"ahler geometry. 
 
Equation \eqref{eq_QMA} is called the \emph{quaternionic Monge--Amp\`ere equation} and it was originally considered by Alesker in open sets of the $n$-dimensional quaternionic space $\mathbb H^n$ \cite{Alesker}.
 
The conjecture of Alesker and Verbitsky is still open, but some partial results in the literature suggest that it could be true \cite{Alesker (2013),Alesker-Shelukhin (2017),Alesker-Verbitsky (2010),BGV,DinewSroka,GL,GentiliVezzoni,GV,GV2,Sro,Sroka,Sroka24}. In particular it is known that solutions to \eqref{eq_QMA} are unique \cite{Alesker-Verbitsky (2010)} and that the equation can be solved under some extra assumptions. In \cite{Alesker (2013)} Alesker confirmed the conjecture when the hypercomplex structure is locally flat and there is a hyperk\"ahler metric compatible with it. 

\medskip

A hypercomplex manifold is {\em locally flat} if it is locally isomorphic to open sets of $\mathbb H^n$. This condition was first considered by Sommese \cite{Sommese} who proved that it forces the underlying differential manifold to admit an atlas whose transition functions are affine quaternionic transformations. 
Moreover, the locally flat condition can be characterised in terms of the flatness of the Obata connection, i.e. the unique torsion free connection on a hypercomplex manifold preserving the hypercomplex structure \cite{Obata (1956)}. Typical examples of locally flat HKT manifolds are: flat hyperk\"ahler manifolds, Hopf hypercomplex manifolds \cite{Kato}, $2$-step nilmanifolds with left-invariant abelian hypercomplex structures (see \cite{dotti-fino2} and \cite[Prop. 6.1]{dotti-fino1}).  

\medskip

Recently Dinew and Sroka strongly improved Alesker's Theorem by removing the assumption of local flatness and confirming the conjecture on compact hyperk\"ahler manifolds \cite{DinewSroka}.

The next natural step in the study of the conjecture is the case when the manifold is locally flat, but does not admit any compatible hyperk\"ahler metric. This is the objective of the present paper where we prove that in the locally flat case equation \eqref{eq_QMA} can always be solved at least in a weak sense. This provides new evidence for the validity of the conjecture. 
Our methodology strictly follows the approach in the complex case developed in the paper of  Berman, Boucksom, Guedj and Zeriahi \cite{Berman-Boucksom-Guedj-Zeriahi} and is based on a variational technique via a quaternionic version of the Ding functional. An analogous method was implemented in several different scenarios. 

This approach requires a thorough investigation of the properties of {\em quaternionic plurisubharmonic functions}. The notion of quaternionic plurisubharmonicity was introduced by Alesker \cite{Alesker 2003}. Pluripotential theory in the quaternionic space was developed by Kang, Wan, Wang and Zhang through the study of quaternionic plurisubharmonic functions \cite{Wan 2017,Wan 2019,Wan-Kang,Wan-Wang,Wan-Zhang}. In particular, following some ideas of Bedford and Taylor \cite{Bedford-Taylor 1982}, Wan and Wang extended in \cite{Wan-Wang} the definition of the {\em quaternionic Monge--Amp\`{e}re operator} 
to locally bounded quaternionic plurisubharmonic functions on domains of $\H^n$. These definitions can be naturally further extended to compact locally flat HKT manifolds $(M^n,I,J,K,\Omega)$
by defining the set of $\Omega$\emph{-quaternionic plurisubharmonic functions} as the set of upper semicontinuous functions $\phi\in L^1(M,\R)$ such that $u+\varphi$ is quaternionic plurisubharmonic on any quaternionic chart $U$ where $\partial \partial_Ju=\Omega$ (see Section \ref{Sec 2}). Therefore the {\em quaternionic Monge--Ampère operator} 
\begin{equation}\label{MAoperator}
\MA_{\f}:= (\Omega+\partial \partial_J\f)^n
\end{equation}
extends to bounded $\Omega$-quaternionic plurisubharmonic functions. 
\medskip

The purpose of the present paper is to prove the following

\begin{thm}\label{thm:main}
Let $ (M^n,I,J,K,\Omega) $ be a compact locally flat HKT manifold such that $K_I(M)$ is holomorphically trivial. Then the quaternionic Monge--Ampère equation \eqref{eq_QMA} has always a unique continuous solution $\varphi$.
\end{thm}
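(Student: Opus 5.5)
We follow the variational strategy of Berman--Boucksom--Guedj--Zeriahi, transplanted to the quaternionic setting on locally flat charts. Denote by $\QPSH(M,\Omega)$ the $\Omega$-quaternionic plurisubharmonic functions and by $\mu:=e^F\Omega^n$ (a smooth positive $(2n,0)$-form, integrated against $\overline\Theta$). The first step is to set up pluripotential theory on $M$. Local flatness lets us work in quaternionic affine charts, where $\Omega=\partial\partial_J u$ with $u$ smooth and strictly quaternionic psh. In each chart the Wan--Wang theory gives a well-defined, weakly continuous operator $(\partial\partial_J(u+\f))^n$ along monotone sequences of bounded qpsh functions. Because $\partial\partial_J$ of an affine quaternionic change vanishes appropriately, these local definitions glue to a global operator $\MA_\f$.

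We then need the following standard facts for bounded $\f,\psi\in\QPSH(M,\Omega)$:
\begin{enumerate}
\item[(a)] integration by parts, $\int_M \f\,\partial\partial_J\psi\wedge T=\int_M\psi\,\partial\partial_J\f\wedge T$, which uses $\partial\Omega=0$ and Stokes against $\overline\Theta$ (holomorphic, hence $\bar\partial$-closed);
\item[(b)] the comparison principle;
\item[(c)] compactness of $\{\f\in\QPSH:\sup\f=0\}$ in $L^1$, with a uniform $\int e^{-\alpha\f}\le C$ (quaternionic Skoda) coming from the local subharmonicity of qpsh functions in $\R^{4n}$.
\end{enumerate}

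Next we define the Monge--Amp\`ere energy
$$E(\f)=\frac{1}{n+1}\sum_{j=0}^n\int_M\f\,(\Omega+\partial\partial_J\f)^j\wedge\Omega^{n-j}$$
(normalised by $\int\Omega^n$). Using (a), we show that $E$ is concave and monotone, that it is upper semicontinuous on $\QPSH$, and that it has derivative $\MA_\f$. We then extend $E$ to the finite energy class $\mathcal E^1$ and introduce the Ding functional
$$D(\f)=E(\f)+\log\int_M e^{-\f}\mu,$$
or more simply, since there is no Kähler--Einstein twist, the functional $\f\mapsto E(\f)-\int\f\,\mu$. The second choice is better, being linear in $\mu$. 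We maximise it over $\mathcal E^1$. Coercivity follows from $\int\f\,\mu\le \sup\f + C$ together with $\sup\f\le \int\f\,\Omega^n/\!\int\Omega^n + C$ (by (c)), and from $E(\f)\le\sup\f$ up to normalisation. Upper semicontinuity of $E$ and continuity of $\f\mapsto\int\f\mu$ along $L^1$-convergent sequences (again using the uniform integrability in (c)) then yield a maximiser $\f_0$.

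The step I expect to be the main obstacle is the Euler--Lagrange equation, i.e.\ showing that the maximiser solves $\MA_{\f_0}=\mu$ after normalisation. To perturb within $\QPSH$ we need envelopes $P(\f_0+tv)$ for $v\in C^0$, together with the differentiability $\frac{d}{dt}E(P(\f+tv))|_{t=0}=\int v\,\MA_\f$. This rests on the orthogonality $\int (P(h)-h)\,\MA_{P(h)}=0$, which we obtain via a quaternionic balayage on small quaternionic balls in the flat charts, using Wan--Wang's Dirichlet problem solvability there. With this in hand, the projection argument of BBGZ goes through verbatim and gives a weak solution in $\mathcal E^1$.

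For boundedness and continuity we use the Ko{\l}odziej approach. Since $\mu$ has smooth density, the capacity estimate $\mu(K)\le C\,\mathrm{Cap}(K)^2$ follows from (c), and Ko{\l}odziej's iteration with (b) gives $\f_0\in L^\infty$. Continuity follows by approximation: on a locally flat manifold we can regularise by convolution in charts and glue (à la Demailly/Blocki--Kołodziej), producing smooth $\Omega$-qpsh approximants; stability estimates from (b) then make $\f_0$ a uniform limit of continuous functions. Finally, we normalise so that $\int\f_0\Omega^n=0$. Uniqueness among bounded solutions follows from the comparison principle, by the standard Kołodziej/Dinew-type argument.
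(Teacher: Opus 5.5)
\textbf{Overall.} Your existence argument (maximising $E(\f)-\int_M\f\,\mu$ over $\mathcal E^1$, Euler--Lagrange equation via the envelopes $P(\f+tv)$ and balayage in small balls) and the Ko{\l}odziej-type boundedness scheme follow the paper. There is, however, a genuine gap in the continuity step, and a false ingredient in (c).

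\textbf{The continuity step.} You assume that a bounded $\Omega$-quaternionic psh function is a decreasing limit of smooth (or continuous) $\Omega$-quaternionic psh functions, obtained by convolving in charts and gluing ``\`a la Demailly/B{\l}ocki--Ko{\l}odziej''. No such result is known here, and the paper says so explicitly. When $\f$ is discontinuous, the local regularisations $(u_j+\f)\star\chi_\epsilon-u_j$ do not converge uniformly on overlaps, so gluing them costs a fixed amount of positivity. One only obtains approximants in $\QPSH(M,C\Omega)$, with $C>1$ independent of $\epsilon$ (Proposition~\ref{Prop:approx}). The paper identifies the lack of a quaternionic Liouville-type theorem as the obstruction to running B{\l}ocki--Ko{\l}odziej; indeed bounded non-constant quaternionic psh functions on $\H^n$ exist.

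With a fixed loss $C\Omega$, your stability inequality $\sup_M(\f_j-\f_0)\le\epsilon+C'\,\mathrm{Cap}_\Omega(\{\f_j-\f_0>\epsilon\})^{\alpha/n}$ cannot be applied. It rests on the comparison principle, which needs $\f_j\in\QPSH(M,\Omega)$, and rescaling to $\f_j/C$ destroys convergence to $\f_0$. Producing continuous approximants by solving the equation with smoothed data would be circular, since smooth solvability is the conjecture itself.

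The paper avoids global regularisation and runs Ko{\l}odziej's local argument \cite{K98}:
\begin{itemize}
\item Assuming $d=\sup_M(\f-\f_*)>0$, pick $x_0$ where the jump is maximal, with $\f(x_0)$ minimal among such points.
\item By Lemma~\ref{Lem:potential}, take a local potential $u$ of $\Omega$ with a strict minimum at $x_0$.
\item On a small ball, compare $w=t(u+\f)+d-a_0$, whose Monge--Amp\`ere measure $t^n\mathrm{e}^F\Omega^n$ has bounded density, with convolutions $\psi_j\searrow u+\f$, which are genuinely quaternionic psh there.
\item Lemma~\ref{Cor:Hartogs} gives $\psi_j<w$ near the boundary of the ball.
\item Proposition~\ref{Prop:supCap} (Wan--Kang comparison principle plus Sroka's volume--capacity bound, with constants uniform in $j$) keeps $\mathrm{Cap^{WK}}(\{\psi_j-w>c\})$ bounded below for some $c>0$.
\item This contradicts $\mathrm{Cap^{WK}}(\{\psi_j-(u+\f)>\epsilon\})\to0$.
\end{itemize}

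\textbf{The false ingredient in (c).} Quaternionic psh functions satisfy no uniform Skoda bound $\int\mathrm{e}^{-\alpha\f}\le C$. For example, $-|q|^{-2}$ is quaternionic psh on $\H^n$, yet $\mathrm{e}^{\alpha|q|^{-2}}$ is not integrable near $0$ for any $\alpha>0$. A small cut-off multiple of it is $\Omega$-quaternionic psh on $M$. In general such functions lie only in $L^p_{\rm loc}$ for $p<2$ \cite{Sro}.

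This does not hurt the variational step: since $\mu$ has bounded density, $L^1$-convergence and Lemma~\ref{lem:rel_cptness} suffice. Note also that properness needs the lower bound $\int_M\f\,\mu\ge\sup_M\f-C$, not the upper bound you wrote. But your volume--capacity estimate $\mu(K)\le C\,\mathrm{Cap}(K)^2$ ``from (c)'' then has no justification. The paper instead uses Sroka's local bound $\lambda(E)\le C_\alpha\,\mathrm{Cap^{WK}}(E)^{1+\alpha}$ for $\alpha\in(0,1)$, transferred to $\mathrm{Cap}_\Omega$ via Lemma~\ref{lem:Cap_C_comparable}. Any exponent $>1$ suffices for the de Giorgi iteration.

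\textbf{Smaller points.}
\begin{itemize}
\item That iteration also needs the comparison principle on $\mathcal E(M,\Omega)$, not only for bounded functions, because $\f_0$ is not yet known to be bounded.
\item Your one-line uniqueness argument is under-justified: Dinew's argument needs mixed Monge--Amp\`ere inequalities that are not established here. The paper adapts B{\l}ocki's integration-by-parts proof \cite{Blocki}, using positivity of $\int_M\partial f\wedge\partial_J f\wedge T$ for differences of bounded $\Omega$-quaternionic psh functions and the resulting Cauchy--Schwarz inequality.
\end{itemize}
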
   

Theorem \ref{thm:main} is proved by developing a new approach to the Alesker--Verbitsky conjecture based on quaternionic pluripotential theory, rather than by adapting Yau's estimates from the classical Calabi conjecture. In view of \cite{Alesker (2013),DinewSroka}, the result is new when $(M,I,J,K)$ does not admit a compatible hyperk\"ahler metric. Natural examples to which the theorem applies are nilmanifolds \cite{dotti-fino1,dotti-fino2}.
Our approach could, hopefully, be extended to the non-locally flat case in future developments, in order to obtain a general version of Theorem \ref{thm:main} in the full framework of the Alesker--Verbitsky conjecture.

\smallskip

Theorem \ref{thm:main} is proved as follows: \\
In Section \ref{sec 5} we prove the existence of solutions in the space $\mathcal{E}^1(M,\Omega)$ of quaternionic plurisubharmonic functions with finite Monge--Ampère energy (we introduce this space in Section \ref{sec 4}).
Here, following the ideas in \cite{Berman-Boucksom-Guedj-Zeriahi}, we consider a Ding-type functional $\mathcal{F}\colon  \mathcal{E}^1(M,\Omega)\to \R$ and we show that it is an Euler--Lagrange functional of the quaternionic equation \eqref{eq_QMA} and that its critical points are weak solutions of the quaternionic Monge--Ampère equation. The strategy of the proof now adopts a variational method. We show that  $\mathcal{F}$ is proper, which allows us to deduce the existence of a maximiser in a finite energy class and thus the existence of a weak solution.

We then prove in Section \ref{Sec 6} that such a solution in $\mathcal{E}^1(M,\Omega)$ is necessarily bounded by adapting an argument in \cite{EGZ}. In Section \ref{Sec 7} following the ideas of \cite{K98} and \cite{Cho-Choi}, we show that any bounded solution in $\mathcal{E}^1(M,\Omega)$ is necessarily continuous. Finally, the uniqueness of $\varphi $ is achieved using suitable integration by parts in a similar way to the classical proof of uniqueness of solutions of the complex Monge--Ampère equations due to Calabi \cite{Calabi}.

In Section \ref{Sec 2}, we introduce quaternionic plurisubharmonic functions on locally flat HKT manifolds, proving some essential results such as a Hartogs' Compactness Theorem.
Section \ref{sec 3} is devoted to the definition of the quaternionic Monge–Ampère operator for unbounded $\Omega$-quaternionic plurisubharmonic functions. We also establish fundamental results in pluripotential theory, including the comparison principle and the maximum principle.

Finally, to improve the readability of the paper, we collect some technical results used in the paper in a final Appendix.

\medskip

\noindent {\bf Acknowledgements.} The authors are very grateful to Semyon Alesker, S\l awomir Dinew and Mehdi Lejmi for very useful comments on an early version of the paper.

\section{Quaternionic plurisubharmonic functions on locally flat HKT manifolds}\label{Sec 2}
{\em Quaternionic plurisubharmonic functions} on domains of $\H^n$ were introduced by Alesker in \cite{Alesker 2003} and their local theory was mostly developed in \cite{Alesker,Sro,Wan 2017,Wan 2019,Wan 2019b,Wan 2020,Wan-Kang,Wan-Wang,Wan-Zhang,Wang (2021)}. Throughout the paper by a {\em domain} in $\H^n$ we mean a connected open subset.

\begin{defn}
Given a domain $A\subseteq \H^n$, a function $ \phi \colon A \to [-\infty,\infty) $ is called {\em quaternionic plurisubharmonic} (quaternionic psh for short) if it is upper semicontinuous, $ \phi \not \equiv -\infty $ and it is either subharmonic or constant $ -\infty $ on each affine right quaternionic line, i.e. for every $ x\in A $ and $ v\in \H^n $ the function $ y\mapsto \phi(x+vy) $ is either subharmonic in the usual sense or constant $ -\infty $. We denote by $\QPSH(A)$ the set of quaternionic plurisubharmonic functions on $A$.
\end{defn}

Quaternionic plurisubharmonic functions are in particular subharmonic  \cite{Alesker 2003}, and for $ n=1 $ the two notions coincide. Therefore quaternionic psh functions inherit all the nice properties of subharmonic functions, for instance, they are in $ L^1_{\rm loc} $.
Even if, for many aspects, the theory of  quaternionic psh functions is analogous to the one of plurisubharmonic functions in the complex space, there are also some differences: for instance complex plurisubharmonic functions are always in $ L^p_{\mathrm{loc}} $ for any $ p\geq 1 $, while Sroka showed in \cite{Sro} that quaternionic psh functions are only in $ L^p_{\mathrm{loc}} $ for $ p<2 $ and such exponent is optimal. Moreover, another remarkable difference is that bounded plurisubharmonic functions in $ \C^n $ are necessarily constant, while bounded quaternionic psh functions on $ \H^n $ need not be. For example, for $ n=1 $ we know that quaternionic psh functions can be regarded as subharmonic functions in $ \R^4 $ and it is well-known that there exist bounded non-constant subharmonic functions in $ \R^4 $.

Quaternionic psh functions can be regularised via convolution (Proposition \ref{prop_convolution}), and $\QPSH(A)$ is a convex cone in $L^1_{\mathrm{loc}}(A)$ that is closed under decreasing limits (Proposition \ref{3}).  

\medskip 
A $C^2$-function $\varphi$ on an HKT manifold $(M,I,J,K,\Omega)$ is {\em $\Omega$-quaternionic plurisubharmonic}  if 
$\Omega+\partial \partial_J\f$ is q-positive. In the locally flat case the definition naturally extends to less regular functions by considering local quaternionic charts where $\Omega$ writes as $\partial \partial_Ju$ for a smooth function $u$ (such charts always exist \cite{Alesker-Verbitsky (2006),Banos}). We introduce the following

\begin{defn}
A function $ \phi \in L^1(M)$ on a locally flat HKT manifold $(M,I,J,K,\Omega)$ is called $\Omega $-{\em quaternionic plurisubharmonic} ($\Omega$-quaternionic psh for short) if in any local quaternionic chart such that $\Omega=\partial \partial_J u$ the function $ u+\phi $ is quaternionic plurisubharmonic. 
We denote by
\[
\QPSH(M,\Omega):= \{ \phi \in L^1(M) \mid \phi \text{ is $\Omega$-quaternionic psh} \}
\]
the space of $ \Omega $-quaternionic psh functions on $(M,I,J,K,\Omega)$. 
\end{defn}

Note that any $\Omega$-quaternionic psh function is bounded from above as it can be expressed locally as the difference of a quaternionic psh function and a local potential of $\Omega$, implying upper semicontinuity. The set $ \QPSH(M,\Omega) $ is naturally endowed with the $ L^1 $-topology. Moreover, as in the complex case (see, e.g., \cite[Prop. 8.2]{Guedj-Zeriahi book}), from the local theory it can be proved that the set $\QPSH(M,\Omega)$ is closed under maxima and convex combinations.

\medskip


The theory of $ \Omega $-quaternionic psh functions naturally interlaces with the study of currents. Given a compact HKT manifold $(M,I,J,K,\Omega)$ we denote by $ \mathcal{D}^{p,q}(M) $ the space of $ (p,q) $-currents on $M$ with respect to $I$. By definition $ \mathcal{D}^{p,q}(M) $
is the topological dual to $ \Lambda^{2n-p,2n-q}(M) $, where $ \Lambda^{2n-p,2n-q}(M) $ is with respect to $I$.  For instance, any $ (p,q) $-form $ \eta $ naturally defines a $ (p,q) $-current $ T_\eta $ by integration:
\[
T_\eta(\alpha)=\int_M \eta \wedge \alpha\,.
\]

The actions of $ I,J,K $ naturally extend to currents by duality; for instance the complex structure $ J\colon \mathcal{D}^{p,q}(M)\to \mathcal{D}^{q,p}(M) $ acts on $ T\in \mathcal{D}^{p,q}(M) $ as $
(JT)(\alpha)=T(J\alpha) $ for any $ \alpha \in \Lambda^{2n-p,2n-q}(M) $. Similarly, the operators $ \partial,\partial_J \colon \mathcal{D}^{p,q}\to \mathcal{D}^{p+1,q} $ extend to $ (p,q) $-currents by 
\begin{equation}\label{eqn:curr}
(\partial T)(\alpha)=(-1)^{p+q+1}T(\partial \alpha)\,, \qquad (\partial_J T)(\alpha)=(-1)^{p+q+1}T(\partial_J \alpha)\,.
\end{equation}
for any $ \alpha \in \Lambda^{2n-p,2n-q}(M) $. We recall here that a current $ T\in \mathcal{D}^{2p,2q} $ is called
\begin{itemize}
	\item \emph{q-real} if $ JT=\bar T $, where $ \bar T(\alpha):=\overline{T(\bar \alpha)} $ for any $ \alpha \in \Lambda^{2n-2p,2n-2q}(M) $;
	
	\vspace{0.1cm}
	\item \emph{q-positive} if it is q-real and additionally $ T(\alpha)\geq 0 $ for any q-positive $ \alpha \in \Lambda^{2n-2p,2n-2q}(M) $;
	
	\vspace{0.1cm}
	\item $ \partial $-\emph{closed} (resp. $ \partial_J $-\emph{closed}) if $ \partial T=0 $ (resp. $ \partial_JT=0 $);
	
	\vspace{0.1cm}
	\item \emph{q-closed} if it is both $ \partial $ and $ \partial_J$-closed.
\end{itemize}

For the second item we recall that a form $\alpha \in \Lambda^{2n-2p,2n-2q}(M)$ is called \emph{q-positive} if
\[
\alpha(Z_1,J\bar Z_1,\dots,Z_{n-p},J\bar Z_{n-p},\bar W_{1},JW_1,\dots,\bar W_{n-q},JW_{n-q})> 0
\]
for all non-vanishing linearly independent $Z_1,\dots,Z_{n-p}\in T^{1,0}_IM$ and all non-vanishing linearly independent $W_1,\dots,W_{n-q}\in T^{1,0}_IM$, equivalently, $\alpha$ lies in the interior of the cone generated by forms of the type
\[
\beta_1 \wedge J^{-1}\bar \beta_1 \wedge \dots \wedge  \beta_{n-p} \wedge J^{-1}\bar \beta_{n-p} \wedge \bar\gamma_1 \wedge J^{-1}\gamma_1\wedge \dots \wedge \bar \gamma_{n-q} \wedge J^{-1}\gamma_{n-q} \,,
\]
where $\beta_1,\dots,\beta_{n-p},\gamma_1,\dots,\gamma_{n-q} \in \Lambda^{1,0}_IM$ (see \cite{Verbitsky (2010)}).

Notice that a q-real current is $ \partial $-closed if and only if it is $ \partial_J $-closed. Moreover, by \cite[Prop. 3.4]{Wan-Wang}, any q-positive current can be regarded as a differential form whose coefficients are Radon measures. Recall that a  {\em Radon measure} is a Borel measure that is both inner and outer regular. 

\begin{oss}{\em 
The results in \cite{Wan-Kang,Wan-Wang,Wan-Zhang} are stated in terms of certain operators $ d_0, d_1, $ and their composition $ \Delta:=d_0d_1 $. Even if in general $d_0$ and $d_1$ differ from $\partial$ and $\partial_J$, it was proved by Sroka \cite[Prop. 1]{Sro} that after a suitable choice of coordinates, we have
\[
d_0=2\partial_J\,, \qquad d_1=-2\partial\,, \qquad \Delta=4\partial \partial_J\,.
\]
For this reason we will freely refer to the results contained in \cite{Wan-Kang,Wan-Wang,Wan-Zhang}, also when working with the operators $\partial$ and $\partial_J$.
}
\end{oss}

As a consequence of the next result, the definition of $\Omega$-quaternionic psh functions is independent of the choice of local potentials for $\Omega$, and the set $\QPSH(M,\Omega)$ essentially corresponds to the set of q-closed q-positive currents representing the (quaternionic Bott--Chern) cohomology class of $\Omega$.
\begin{prop}\label{prop:1}
Let $(M,I,J,K,\Omega)$ be a compact locally flat HKT manifold. If $\varphi\in \QPSH(M,\Omega)$, then $\Omega+\partial\partial_J \varphi$ is a q-closed q-positive current. Conversely if $\varphi\in L^1(M)$ is such that the q-closed current $\Omega +\partial \partial_J \varphi$ is q-positive, then there exists $\psi\in \QPSH(M,\Omega)$ such that $\phi=\psi$ almost everywhere. 
\end{prop}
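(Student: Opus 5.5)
The plan is to reduce both directions to local statements in quaternionic charts where $\Omega=\partial\partial_J u$ with $u$ smooth. On such a chart $U$ we have $\Omega+\partial\partial_J\varphi=\partial\partial_J(u+\varphi)$ as currents. Q-closedness and q-positivity are local properties: one can test against forms supported in a chart, using a partition of unity for q-closedness. Both implications then become statements about a single $L^1_{\mathrm{loc}}$ function $v=u+\varphi$ on a domain of $\H^n$. Since $M$ is locally flat and HKT, such charts cover $M$ \cite{Alesker-Verbitsky (2006),Banos}.

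\emph{Direct implication.} Let $\varphi\in\QPSH(M,\Omega)$. Then on each chart $v=u+\varphi$ is quaternionic psh.

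\begin{itemize}
\item[(a)] \emph{Q-closedness.} This holds formally for any $L^1$ function $\varphi$. As a current, $\partial\partial_J\varphi$ is $\partial$-closed because $\partial^2=0$, using the duality formula \eqref{eqn:curr}. It is $\partial_J$-closed because $\partial_J^2=0$ and $\partial\partial_J=-\partial_J\partial$. Finally $\Omega$ is q-closed, since it is $\partial$-closed and q-real.
\item[(b)] \emph{Q-reality.} Since $\varphi$ is real and $\Omega$ is q-real, the current $\Omega+\partial\partial_J\varphi$ is q-real; this is the identity $J\partial\partial_J\varphi=\overline{\partial\partial_J\varphi}$ for real functions, which passes to currents by duality.
\item[(c)] \emph{Positivity.} I will use Proposition \ref{prop_convolution}: the convolutions $v_\epsilon=v*\rho_\epsilon$ are smooth and quaternionic psh on slightly smaller domains, and $v_\epsilon\to v$ in $L^1_{\mathrm{loc}}$. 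For smooth quaternionic psh functions, subharmonicity on every right quaternionic line means exactly that the quaternionic Hessian is nonnegative. Via Sroka's identification $\Delta=4\partial\partial_J$, this gives that $\partial\partial_J v_\epsilon$ is a q-positive form (in the weak, closed-cone sense). Pairing with a q-positive test form $\alpha$ supported in $U$ and passing to the limit gives $\int v\,\partial\partial_J\alpha\ge 0$, i.e.\ $(\partial\partial_J v)(\alpha)\ge0$.
\end{itemize}

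A partition of unity subordinate to the charts finishes the argument. Here I need that a q-positive test form multiplied by a nonnegative cutoff remains q-positive, or at least weakly positive, which is clear from the cone description.

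\emph{Converse.} Suppose $\varphi\in L^1(M)$ and $T=\Omega+\partial\partial_J\varphi$ is q-positive. On a chart, $v=u+\varphi\in L^1_{\mathrm{loc}}$ and $\partial\partial_J v\ge0$ as a current. I mollify to get $v_\epsilon=v*\rho_\epsilon$. Since convolution commutes with the constant-coefficient operator $\partial\partial_J$ in flat quaternionic coordinates and preserves positivity, $\partial\partial_J v_\epsilon\ge0$ pointwise. Hence $v_\epsilon$ is a smooth quaternionic psh function: restricted to any right quaternionic line, its Laplacian is a nonnegative combination of Hessian entries.

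The key step is monotonicity. Quaternionic psh functions are subharmonic, so $v_\epsilon$ is decreasing in $\epsilon$ for a radial mollifier: by the sub-mean value property, $v_\epsilon*\rho_\delta\ge v_\epsilon$ and convolutions commute. By Proposition \ref{3}, the decreasing limit $\tilde v=\lim_{\epsilon\to0}v_\epsilon$ is quaternionic psh, unless it is $\equiv-\infty$, which is excluded because $v_\epsilon\to v$ in $L^1_{\mathrm{loc}}$. Thus $\tilde v=v$ almost everywhere. I then set $\psi=\tilde v-u$ on $U$.

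\emph{Main obstacle: gluing.} The local modifications must patch to a single global $\psi$. On overlaps $U\cap U'$, the potentials $u,u'$ differ by a smooth function $h$ with $\partial\partial_J h=0$. So $\tilde v-u$ and $\tilde v'-u'$ agree a.e. I will argue that they agree everywhere by using the canonical representative: for a subharmonic function, $\tilde v(x)=\lim_{r\to0}$ of its ball averages, i.e.\ $\tilde v(x)=\operatorname{ess\,lim\,sup}_{y\to x}v(y)$. This representative depends only on the a.e.\ class, and adding the smooth function $h$ commutes with taking the upper-semicontinuous regularisation. Hence $\psi(x)=\operatorname{ess\,lim\,sup}_{y\to x}\varphi(y)$ is globally well defined. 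It is $\Omega$-quaternionic psh in every chart, and in particular independent of the choice of local potential. The same argument shows that the definition of $\QPSH(M,\Omega)$ does not depend on the potentials chosen.
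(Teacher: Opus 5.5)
Your proposal is correct and follows essentially the paper's proof. You localize to quaternionic charts where $\Omega=\partial\partial_J u$ and use a partition of unity for positivity. For the converse, you mollify $u+\varphi$, using that convolution commutes with the constant-coefficient $\partial\partial_J$, pass to the decreasing quaternionic psh limit, and glue using that a quaternionic psh function is determined by its a.e.\ class. The differences are cosmetic: you prove the local positivity by mollification where the paper cites \cite[Prop.~3.7]{Wan-Wang}, and you glue via the intrinsic representative $\psi(x)=\operatorname{ess\,lim\,sup}_{y\to x}\varphi(y)$ rather than by comparing regularisations on overlaps. One small correction: the monotonicity of $v_\epsilon$ needs that ball means of a subharmonic function increase with the radius, not merely the sub-mean value inequality $v_\epsilon\star\rho_\delta\geq v_\epsilon$.
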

\begin{proof}
Let $\varphi\in \QPSH(M,\Omega) $ and let $\alpha\in \Lambda^{2n-2,2n}(M)$ be a q-positive form. Consider a finite cover $\mathcal{U}:=\{U_j\}_{j=1}^N$ such that $\Omega_{|U_j}=\partial \partial_J u_j$ for local smooth potentials $u_j$, and denote by $\{\rho_j\}_{j=1}^N$ a partition of unity associated to $\mathcal{U}$. Then we have
$$
(\Omega+\partial\partial_J \phi)(\alpha)=\sum_{j=1}^N (\partial \partial_J (u_j+\varphi))(\rho_j \alpha)\geq 0\,,
$$
where the last inequality follows from \cite[Prop. 3.7]{Wan-Wang}. 

Conversely, let $\varphi\in L^1(M)$ be such that $\Omega+\partial \partial_J\varphi\geq 0$. It follows that
$$
0\leq (\Omega+\partial\partial_J \phi)(\alpha)=(\partial \partial_J (u+\phi))(\alpha)\,,
$$
for any q-positive form $\alpha\in \Lambda^{2n-2,2n}(M)$ that has compact support contained in a local quaternionic chart $U$, where $\Omega\vert_U=\partial\partial_J u_U$.
Take a local standard regularisation $ \psi_{U,\epsilon}=(u_U+\phi) \star \chi_\epsilon $. By \cite[Prop. 2.1.6]{Alesker 2003} we see that $ \partial \partial_J \psi_{U,\epsilon}=(\partial \partial_J (u_U+\phi))\star \chi_\epsilon\geq 0 $ on $U_\epsilon:=\{ x\in U \mid \mathrm{dist}(x,\partial U) > \epsilon\}$ as $ \psi_{U,\epsilon} \in \QPSH(U_\epsilon)\cap C^{\infty}(U_\epsilon)$ (Proposition \ref{prop_convolution}). Hence, for $ \epsilon \searrow 0 $ the regularisation $ \psi_{U,\epsilon} $ decreases to a quaternionic psh function $ \psi_U $ (Proposition \ref{3}), but since $ \psi_{U,\epsilon} \to u_U+\phi $ in $ L^1_{\rm loc}(U) $ we must have $ \psi_U=u_U+\phi $ almost everywhere on $U$.
Next, we observe that for any two quaternionic charts $U_1,U_2$ such that $U_1\cap U_2\neq \emptyset$, we have
$$
\psi_{U_1}=\psi_{U_2}+(u_{U_1}-u_{U_2})
$$
almost everywhere on $U_1\cap U_2$. But $\psi_{U_1}, \psi_{U_2}+(u_{U_1}-u_{U_2})\in \QPSH(U_1\cap U_2)$, thus they are equal on $U_1\cap U_2$ (this immediately follows from Proposition \ref{prop_convolution} as their local regularisations coincide). Hence the functions $\psi_{U_j}-u_{U_j}$ glue together to a global function $\psi$ that is equal to $\phi$ almost everywhere and belongs to $\QPSH(M,\Omega)$.
\end{proof}

We end this section with the following compactness result which will be fundamental in the proof of the main theorem.

\begin{lem}[Hartogs' Compactness Theorem]\label{lem:rel_cptness}
Let $(M,I,J,K,\Omega)$ be a compact locally flat HKT manifold.	The set
\[
\left\{ \phi \in \QPSH(M,\Omega) \mid  \sup_M \phi=0 \right\}
\]
is compact in $ \QPSH(M,\Omega) $. Furthermore, let $\mu$ be a positive Radon measure that satisfies $\QPSH(M,\Omega) \subseteq L^1(\mu)$, then the subset $ \{ \phi \in \QPSH(M,\Omega) \mid \int_M \phi\, d\mu=0 \} $ is relatively compact. In particular there exists $C$ such that for any $ \phi \in \QPSH(M,\Omega)$,
	\begin{equation}
	    \label{eqn:L1_sup_comparison}
        -C+ \mu(M)\sup_M \phi \leq \int_M \phi\, d\mu \leq \mu(M)\sup_M \phi\,.
	\end{equation}
\end{lem}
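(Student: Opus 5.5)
We follow the standard argument for the complex case (see e.g. \cite[Prop. 8.5]{Guedj-Zeriahi book}), working locally in quaternionic charts and using that quaternionic psh functions are subharmonic in $\R^{4n}$.

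Fix a finite cover of $M$ by quaternionic charts $U_j$ with $\Omega|_{U_j}=\partial\partial_J u_j$, and relatively compact balls $B_j\Subset U_j$ that still cover $M$.

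\textbf{Step 1: an $L^1$ bound for the normalised family.} Let $\mathcal{S}:=\{\phi\in\QPSH(M,\Omega)\mid \sup_M\phi=0\}$. We claim $\mathcal{S}$ is bounded in $L^1(M)$. Suppose not, and take $\phi_k\in\mathcal S$ with $\|\phi_k\|_{L^1}\to\infty$. Each $\phi_k$ attains its supremum $0$ at some point $x_k$, since it is upper semicontinuous on a compact set. Near $x_k$ the function $u_j+\phi_k$ is subharmonic, so the sub-mean value inequality on small balls around $x_k$ gives a uniform lower bound for $\int_{B}\phi_k$ on a ball $B$ of fixed radius. This yields a uniform $L^1$ bound on that ball. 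Now use the following local fact for subharmonic functions on a connected open set of $\R^{4n}$: if a sequence is locally uniformly bounded above and does not converge locally uniformly to $-\infty$, then it is bounded in $L^1_{\rm loc}$. Propagate it chart by chart using connectedness of $M$ and the finiteness of the cover. This is the main obstacle: passing the $L^1$ control from one chart to the next requires the overlap argument, and the potentials $u_j$ differ across charts, though they are bounded on the $B_j$. The resulting bound contradicts $\|\phi_k\|_{L^1}\to\infty$.

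\textbf{Step 2: compactness of $\mathcal S$.} Extract a subsequence converging in $L^1_{\rm loc}$ on each chart; this is the classical compactness of subharmonic functions bounded above with bounded $L^1$ norm. Then take the upper semicontinuous regularisation of the limit, $\psi=(\limsup\phi_k)^*$. By Proposition \ref{prop:1}, or by the local theory through Propositions \ref{prop_convolution} and \ref{3}, $\psi$ is $\Omega$-quaternionic psh. Hartogs' lemma for subharmonic functions gives $\sup_M\psi=\lim\sup_M\phi_k=0$. The inequality $\le$ is immediate. For $\ge$, use the points $x_k\to x$ and the sub-mean value property: $\psi(x)\geq\limsup\phi_k(x_k)=0$ follows by averaging over small balls and using $L^1$ convergence. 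So $\mathcal S$ is compact.

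\textbf{Step 3: the measure statement.} The map $\phi\mapsto\int_M\phi\,d\mu$ is bounded on the compact set $\mathcal S$. To see this, note that $\phi\mapsto\int\phi\,d\mu$ is upper semicontinuous along $L^1$-convergent sequences in $\mathcal S$: the $\phi_k$ are uniformly bounded above, and a Fatou-type argument with the regularisation $\psi$ applies. An upper semicontinuous function attains a maximum, but we need a lower bound. Suppose instead $\int\phi_k\,d\mu\to-\infty$ for some $\phi_k\in\mathcal S$. Replace them by $\sum 2^{-k}\phi_{k}$, after passing to a subsequence with $\int\phi_k\,d\mu\le -4^k$. This sum converges in $L^1$ to an element of $\QPSH(M,\Omega)$, by convexity and closedness under decreasing limits (Proposition \ref{3}). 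Its $\mu$-integral is $-\infty$ by monotone convergence, contradicting $\QPSH(M,\Omega)\subseteq L^1(\mu)$. Hence $\int\phi\,d\mu\ge -C$ on $\mathcal S$. Applying this to $\phi-\sup_M\phi$ gives \eqref{eqn:L1_sup_comparison}; the upper bound is trivial.

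Finally, if $\int\phi\,d\mu=0$, then \eqref{eqn:L1_sup_comparison} gives $0\le\sup_M\phi\le C/\mu(M)$. So $\phi-\sup_M\phi\in\mathcal S$ with the shift bounded, and relative compactness follows from Step 2.
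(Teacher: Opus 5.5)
Your proposal is correct and follows essentially the paper's route. Steps 1--2 reproduce the local argument behind Proposition~\ref{prop_basic}.(4): the sub-mean value inequality at a maximum point, followed by an open--closed propagation of the $L^1$ bounds. Step 3 is the same dyadic-series trick $\sum_k 2^{-k}\phi_k$ that the paper uses for non-smooth $\mu$, with the Lebesgue $L^1$ bound ensuring the sum is not $\equiv-\infty$.

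The remaining differences are cosmetic. You keep $\sup_M\psi=0$ for the limit via Hartogs' lemma and sub-mean values, where the paper uses Proposition~\ref{prop_basic}.(3). You also bound $\int_M\phi\,d\mu$ from below on the normalised set and then deduce relative compactness, whereas the paper bounds $\sup_M\phi$ on $\{\int_M\phi\,d\mu=0\}$ directly; the two are equivalent.
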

\begin{proof}
Set $K:=\left\{\phi\in \QPSH(M,\Omega)\, |\, \sup_M \phi=0\right\}$. Letting $(\phi_j)_{j\in \N}\subset K$, Proposition \ref{prop_basic}.(4) yields a subsequence $(\phi_{j_k})_{k\in \N}\subset K$ that converges to a function $\phi\in \QPSH(M,\Omega)$. Then, as immediate consequence of Proposition \ref{prop_basic}.(3), we have that necessarily $\sup_M \phi=0$, i.e. $\phi\in K$ and $K$ is compact.

The right inequality in \eqref{eqn:L1_sup_comparison} is trivial. Let now assume by contradiction to have a sequence $(\phi_j)_{j\in \N}$ in  $\QPSH(M,\Omega)$ such that $ \int_M \phi_j\, d\mu=0 $ and such that $\mu(M)\sup_M \phi_j\geq 2^j$ for all $j\in\N$. Setting $\psi_j:=\phi_j-\sup_M \phi_j $, by the first part of the proof, up to passing to a subsequence, $ \psi_{j_k} \to \psi \in L^1(M) $. We observe that 
\[
\|\psi_j\|_{L^1(\mu)}=\int_M \sup_M \phi_j\, d\mu-\int_M \phi_j\, d\mu =\mu(M)\sup_M \phi_j\geq 2^j\,.
\]
To derive the contradiction it then remains to prove that $\| \psi_j \|_{L^1(\mu)}$ is uniformly bounded in $j$.

Assume first that $\mu$ is smooth. Then $\psi_j \mu \to \psi \mu$ in the weak sense of measures. Consequently $\int_M \psi_j\, d \mu \to \int_M \psi\, d \mu>-\infty$, implying that $ \|\psi_j\|_{L^1(\mu)} $ is uniformly bounded.

If $\mu$ is not smooth, 
set $\psi=\sum_{j = 1}^\infty 2^{-j} \psi_j$. As the $ L^1 $ norm of $ \psi_j $ with respect to a smooth positive Radon measure is uniformly bounded, we have $ \psi \not \equiv -\infty $. Moreover $\psi\in \QPSH(M,\Omega)$, being decreasing limit of a sequence of functions in $ \QPSH(M,\Omega) $ (see Proposition \ref{prop_basic}.(1)). In particular $\psi\in L^1(\mu)$ and we get a contradiction observing that by the monotone convergence theorem we have $\int_M \psi\, d\mu=\sum_{j = 1 }^\infty 2^{-j} \int_M \psi_j\, d\mu=-\infty$.
\end{proof}




\section{The quaternionic Monge--Amp\`{e}re operator}\label{sec 3}
Let $ (M,I,J,K,\Omega) $ be a compact locally flat HKT manifold. 

\medskip
The {\em quaternionic Monge--Amp\`ere operator} \eqref{MAoperator}
naturally defined for $C^2$-functions, can be extended to functions in $\QPSH(M,\Omega)\cap L^\infty(M)$ by using currents. Although in general the wedge product of currents is not defined, if $\varphi\in \QPSH(M,\Omega)\cap L^\infty(M)$ and $k\in \mathbb N$, we can define $ (\Omega+\partial \partial_J \phi)^k $ as follows:

Firstly, given a current $ T\in \mathcal{D}^{p,q}(M) $ and $ \eta \in \Lambda^{r,s}(M) $ on $(M,I)$ their wedge product is defined as the $(p+r,q+s)$-current acting on  $ \alpha \in \Lambda^{2n-p-r,2n-q-s}(M) $ as 
\[
(T\wedge \eta)(\alpha):=T(\eta \wedge \alpha)\,;
\]
then for $\phi \in \QPSH(M,\Omega)\cap L^\infty(M) $ and a q-closed q-positive $ T\in \mathcal{D}^{2p,0}(M) $, the product $ \phi T $ is well-defined and, consequently, so is the current $ \partial \partial_J \phi \wedge T$ via the relation
\[
\partial \partial_J \phi \wedge T:=\partial \partial_J(\phi T)\,.
\]
The current $ \partial \partial_J \phi \wedge T$ is q-closed and, local regularisation (Proposition \ref{prop_convolution}) together with Proposition \ref{prop:1}, implies that $(\Omega+ \partial \partial_J \phi) \wedge T$ is also q-positive. Proceeding inductively one defines $ (\Omega+\partial \partial_J \phi)^k $. We can then regard $\MA$ as an operator $\QPSH(M,\Omega)\cap L^\infty(M)\to \mathcal{D}^{2n,0}(M)$. More generally, given $\phi_1,\dots,\phi_n\in \QPSH(M,\Omega)\cap L^\infty(M)$, we define the {\em quaternionic mixed Monge--Amp\`ere operator}
\[
\MA_{\phi_1,\dots,\phi_n}:= (\Omega+\partial \partial_J \phi_1) \wedge \dots \wedge (\Omega+\partial \partial_J \phi_n)\,,
\]
which is clearly symmetric in the $\phi_i$'s.

We have the following 

\begin{thm}\label{thm:cont}
	If $ \phi^0_j,\dots,\phi^k_j $ are sequences in $ \QPSH(M,\Omega)\cap L^\infty(M) $, such that either one of the following holds:
	\begin{itemize}
		\item $ \phi^i_j $ decreases pointwise to a $\Omega$-quaternionic plurisubharmonic function $ \phi^i\in L^\infty(M) $ for each $ i=0,\dots,k $;
		\item $ \phi^i_j $ is locally uniformly bounded and increases almost everywhere
		to a $\Omega$-quaternionic plurisubharmonic function $ \phi^i\in L^\infty(M) $ for each $ i=0,\dots,k $;
	\end{itemize}
	 then
	\[
	\phi^0_j\partial \partial_J \phi^1_j\wedge \dots \wedge \partial \partial_J \phi^k_j \to \phi^0\partial \partial_J \phi^1\wedge \dots \wedge \partial \partial_J \phi^k\,,
	\]
	in the weak sense of currents.
\end{thm}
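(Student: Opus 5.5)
The proof is a reduction to the local Bedford--Taylor type convergence theorem of Wan and Wang \cite{Wan-Wang}, via a partition of unity. Weak convergence of currents is tested on forms, and a test form splits into compactly supported pieces, so it is enough to prove convergence on test forms $\alpha$ supported in a single quaternionic chart $U$ with $\Omega|_U=\partial\partial_J u$ and $u$ smooth. Fix a finite cover $\{U_l\}$ by such charts with local potentials $u_l$ and a subordinate partition of unity $\{\rho_l\}$. Then for any test form $\alpha$ we write
\[
(\phi^0_j\partial \partial_J \phi^1_j\wedge \dots \wedge \partial \partial_J \phi^k_j)(\alpha)=\sum_l (\phi^0_j\partial \partial_J \phi^1_j\wedge \dots \wedge \partial \partial_J \phi^k_j)(\rho_l\alpha),
\]
and each summand concerns only the chart $U_l$.

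On a chart $U$ we set $v^i_j:=u+\phi^i_j$, which lies in $\QPSH(U)\cap L^\infty_{\rm loc}(U)$. By construction $\partial\partial_J\phi^i_j=\partial\partial_J v^i_j-\Omega$, and $\Omega=\partial\partial_J u$ is a smooth q-closed form.

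\emph{Step 1: expand into local products.} Using multilinearity, expand
\[
\phi^0_j\bigwedge_{i\ge1}(\partial\partial_J v^i_j-\partial\partial_J u)
\]
as a finite sum of terms of the form $\pm\,\phi^0_j\,\partial\partial_J v^{i_1}_j\wedge\dots\wedge\partial\partial_J v^{i_s}_j\wedge\Omega^{k-s}$. Write $\phi^0_j=v^0_j-u$. Each term then becomes a difference of two kinds of products: $v^0_j\,\partial\partial_J v^{i_1}_j\wedge\dots\wedge\Omega^{k-s}$ and $u\,\partial\partial_J v^{i_1}_j\wedge\dots\wedge\Omega^{k-s}$.

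This expansion must be consistent with the inductive global definition $\partial\partial_J\phi\wedge T:=\partial\partial_J(\phi T)$. That consistency follows from the local identity $\partial\partial_J(u T)=\partial\partial_J u\wedge T$ for smooth $u$ and q-closed $T$. The identity is a Leibniz computation and uses that $T$ is $\partial$- and $\partial_J$-closed and that $\partial\partial_J=-\partial_J\partial$. Wedging with the smooth closed form $\Omega^{k-s}$ is harmless, because it just changes the test form.

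\emph{Step 2: apply the local theorem.} Under hypothesis (i), the $v^i_j$ decrease pointwise to $v^i=u+\phi^i$. Under hypothesis (ii), they are locally uniformly bounded and increase almost everywhere to $v^i$. In both cases the local convergence theorem in \cite{Wan-Wang} (the quaternionic Bedford--Taylor monotone convergence theorem, for decreasing and for increasing a.e. sequences) gives
\[
v^0_j\,\partial\partial_J v^{i_1}_j\wedge\dots\to v^0\,\partial\partial_J v^{i_1}\wedge\dots
\]
weakly on $U$. The terms with the smooth factor $u$ in place of $v^0_j$ converge by the same theorem with the constant sequence $u$, or simply because $\partial\partial_J v^{i_1}_j\wedge\dots$ converges weakly and multiplying by a smooth function preserves weak convergence. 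Summing over the expansion and then over $l$ gives the claim.

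\emph{Main obstacle.} The delicate point is the bookkeeping in Step 1: the global product is defined inductively through $\partial\partial_J(\phi T)$, and it has to agree with the Wan--Wang local product on each chart. I would prove this by induction on $k$, using that local regularisation (Proposition \ref{prop_convolution}) shows that both definitions coincide for smooth functions and are continuous along decreasing smooth approximations. I would also check that the hypothesis in (ii), increase only almost everywhere, matches exactly the form in which the local theorem is stated, which is quasi-everywhere or almost-everywhere increasing with a quaternionic psh limit.
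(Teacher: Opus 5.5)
Your proposal is correct and is essentially the paper's proof. The paper handles this in one line: it uses local flatness to reduce to the local quaternionic Bedford--Taylor convergence theorem, citing \cite[Prop. 3.2]{Wan-Zhang} as the local statement for both monotone cases, so that is the reference to check for (ii). Your partition-of-unity localisation with $v^i_j=u+\phi^i_j$ and the multilinear expansion is that same reduction written out. The consistency issue you flag needs no regularisation: by induction on $k$, on a chart one has $\phi T=(v-u)T$, and the Leibniz identity $\partial\partial_J(uT)=\Omega\wedge T$ directly identifies the global and local products, whereas invoking continuity of the global operator there would be circular.
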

\begin{proof}
Following \cite{Bedford-Taylor 1976}, such continuity can be deduced directly from the local theory as the manifold is assumed to be locally flat, cf. \cite[Prop. 3.2]{Wan-Zhang}.
\end{proof}

From now on we assume that $K_I(M)$ is holomorphically trivial and we adopt the convention described in the introduction by omitting to write the form $\Theta$ inside the integrals.  
Observe that the holomorphicity of $\Theta$ implies $\int_M \partial T=0$ for any $(2n-1,0)$-current $T$. We prove two key results in pluripotential theory: the maximum and the comparison principle. Given a subset $U$ of $M$, we denote by ${\mathbf 1}_{U}$ the characteristic function of $U$. 


\begin{prop}\label{Prop:max_princ}
Let $ \phi_1,\dots,\phi_n,\psi\in \QPSH(M,\Omega)\cap L^\infty(M) $, then
$$
\mathbf{1}_{\{\phi_1>\psi\}}\MA_{\phi_1,\phi_2,\dots,\phi_n}=\mathbf{1}_{\{\phi_1>\psi\}}\MA_{\max(\phi_1,\psi),\phi_2,\dots,\phi_n}.
$$
In particular, for every $\phi,\psi \in \QPSH(M,\Omega) \cap L^\infty(M)$
$$
\mathbf{1}_{\{\phi>\psi\}}\MA_{\phi}=\mathbf{1}_{\{\phi>\psi\}}\MA_{\max(\phi,\psi)}\,.
$$
\end{prop}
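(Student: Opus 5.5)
We follow the classical Bedford--Taylor argument for the maximum principle, working locally in quaternionic charts. The key point is that the set $\{\phi_1>\psi\}$ is only quasi-open, so we cannot simply say that $\phi_1$ and $\max(\phi_1,\psi)$ agree on an open set. We therefore first treat the case where $\psi$ is continuous, and then pass to a general bounded $\psi$ by approximation and Theorem \ref{thm:cont}.

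\emph{Step 1: $\psi$ continuous.} Since $\phi_1$ is upper semicontinuous and $\psi$ is continuous, the set $\{\phi_1>\psi\}$ is not open in general. However, $\{\phi_1<\psi\}$ is open, and on $\{\phi_1>\psi\}$ we have $\max(\phi_1,\psi)=\phi_1$ pointwise. To exploit this, approximate $\phi_1$ from above by a decreasing sequence of continuous (smooth) functions $\phi_1^j\in\QPSH(M,\Omega)$. On a fixed chart this is done by local convolution (Proposition \ref{prop_convolution}). The global gluing is the delicate part, but we only need it locally because the statement is local: we multiply by cutoffs and work with measures on charts where $\Omega=\partial\partial_J u$, so that $\MA$ becomes the local Wan--Wang operator of $u+\phi_i$. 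For continuous $\phi_1^j$ the set $O_j:=\{\phi_1^j>\psi\}$ is open, and on $O_j$ we have $\max(\phi_1^j,\psi)=\phi_1^j$. Hence
\[
\mathbf 1_{O_j}\MA_{\phi_1^j,\phi_2,\dots,\phi_n}=\mathbf 1_{O_j}\MA_{\max(\phi_1^j,\psi),\phi_2,\dots,\phi_n},
\]
since currents defined via $\partial\partial_J(\phi T)$ are local.

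Next we pass to the limit $j\to\infty$. Since $\{\phi_1>\psi\}\subseteq O_j$ and the $O_j$ decrease, for $k\geq j$ we get $\mathbf 1_{O_j}\MA_{\phi_1^k,\dots}=\mathbf 1_{O_j}\MA_{\max(\phi_1^k,\psi),\dots}$. Both sequences $\phi_1^k$ and $\max(\phi_1^k,\psi)$ decrease, so Theorem \ref{thm:cont} gives weak convergence of the measures. We restrict to the open set $O_j$ and use that a weak limit restricted to an open set coincides with the limit tested against functions compactly supported in $O_j$. Letting $k\to\infty$ yields equality of the limit measures on $O_j$. Letting $j\to\infty$ then gives equality on $\bigcap_j O_j=\{\phi_1\geq\psi\}\supseteq\{\phi_1>\psi\}$.

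\emph{Step 2: general bounded $\psi$.} Write $\{\phi_1>\psi\}=\bigcup_{t>0}\{\phi_1>\psi+t\}$. Approximate $\psi$ from above by continuous $\psi_k\searrow\psi$; such a sequence exists, again locally via convolution, or globally for usc functions by continuous functions that need not be psh. If we use non-psh $\psi_k$, Step 1 must be adapted: $\max(\phi_1,\psi_k)$ is no longer psh. To avoid this, we instead use $\Omega$-psh continuous $\psi_k$, which is exactly where global regularisation is needed. On the locally flat manifold we handle this chartwise, since everything is local.

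Then $\{\phi_1>\psi_k\}$ increases to $\{\phi_1>\psi\}$, and $\max(\phi_1,\psi_k)\searrow\max(\phi_1,\psi)$. To pass to the limit in $\mathbf 1_{\{\phi_1>\psi_k\}}\MA_{\max(\phi_1,\psi_k),\dots}$, the indicator is not continuous, so we use the quasicontinuity trick. Write $\mathbf 1_{\{\phi_1>\psi_k\}}\MA_{\max(\phi_1,\psi_k),\dots}$ as the measure $\MA_{\max(\phi_1,\psi_k),\dots}$ tested against $\mathbf 1_{\{\phi_1>\psi_k\}}$, and bound this from below by testing against functions of the form $\min(1,\epsilon^{-1}(\phi_1-\psi_k)^+)$. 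Combined with Theorem \ref{thm:cont}, applied to products $\phi^0\,\partial\partial_J\phi^1\wedge\dots$ with $\phi^0$ bounded differences of psh functions, this lets us take limits.

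The main obstacle is this limit passage with discontinuous indicators, which in the complex case uses quasicontinuity of psh functions with respect to capacity. We would establish the analogous quaternionic quasicontinuity locally from Wan--Wang's theory (capacity and convergence in capacity for monotone sequences). With that in hand, both sides converge on $\{\phi_1>\psi\}$ and the identity follows. The particular case is obtained by taking $\phi_1=\dots=\phi_n=\phi$ and applying the first identity $n$ times, one slot at a time, using symmetry of the mixed operator.
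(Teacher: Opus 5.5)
\textbf{There is a genuine gap in Step 1, and it sits exactly where the content of the proposition lies.}

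For $k\geq j$ you have $\phi_1^k\leq\phi_1^j$, hence $O_k\subseteq O_j$, which is the reverse of what you use. Locality gives the identity for $\phi_1^k$ only on $O_k$. On $O_j\setminus O_k$ one has $\max(\phi_1^k,\psi)=\psi$, so the claimed $\mathbf 1_{O_j}\MA_{\phi_1^k,\phi_2,\dots,\phi_n}=\mathbf 1_{O_j}\MA_{\max(\phi_1^k,\psi),\phi_2,\dots,\phi_n}$ is unjustified, and it is false in general.

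Your argument uses only locality and monotone continuity, so it runs verbatim in the complex setting, where it proves something false. Take $\phi_1\equiv0$, $\phi_1^k=1/k$ and $\psi=\max(\log\|z\|,0)$ on $\C^n$. Then:
\begin{itemize}
\item $O_j=\{\|z\|<e^{1/j}\}$;
\item $(dd^c\phi_1^k)^n=0$;
\item $(dd^c\max(\log\|z\|,1/k))^n$ is carried by the sphere $\{\|z\|=e^{1/k}\}\subset O_j$.
\end{itemize}
Your conclusion would be equality on $\bigcap_jO_j=\{\phi_1\geq\psi\}$, i.e.\ $0=(dd^c\psi)^n$ on the closed unit ball. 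This is false, because $\MA_{\max(\phi_1,\psi)}$ can charge the contact set $\{\phi_1=\psi\}$.

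So Step 1 does not resolve the real difficulty, namely passing weak limits through the indicator of a non-open set. Step 2 rests on Step 1, and by your own account it postpones the same difficulty to a quasicontinuity argument that is not carried out.

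\textbf{The missing idea is to replace indicators by weights that are themselves differences of bounded psh functions.} This is the paper's proof.
\begin{enumerate}
\item No continuity of $\psi$ is needed. If $\phi_1^j$ is continuous and $\psi$ is merely upper semicontinuous, then $O_j=\{\phi_1^j>\psi\}$ is already open. So approximate only $\phi_1$.
\item Set $w_j:=(\phi_1^j-\psi)^+=\max(\phi_1^j,\psi)-\psi$. Since $w_j$ vanishes off $O_j$, locality gives the exact identity $w_j\MA_{\phi_1^j,\phi_2,\dots,\phi_n}=w_j\MA_{\max(\phi_1^j,\psi),\phi_2,\dots,\phi_n}$, with no restriction to a fixed set.
\item Because $w_j$ is a difference of bounded psh functions decreasing to $w:=(\phi_1-\psi)^+$, Theorem \ref{thm:cont} passes both sides to the limit: $w\,\MA_{\phi_1,\phi_2,\dots,\phi_n}=w\,\MA_{\max(\phi_1,\psi),\phi_2,\dots,\phi_n}$.
\item Multiply by the bounded function $1/(w+\epsilon)$ and let $\epsilon\searrow0$. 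Monotone convergence with $w/(w+\epsilon)\nearrow\mathbf 1_{\{\phi_1>\psi\}}$ yields the claim.
\end{enumerate}
This route needs no approximation of $\psi$, no quasicontinuity and no capacities.

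Your test functions from Step 2 are close to this idea. Indeed $\min(1,\epsilon^{-1}(\phi_1^j-\psi)^+)=1+\epsilon^{-1}\bigl(\max(\phi_1^j,\psi)-\max(\phi_1^j,\psi+\epsilon)\bigr)$, and it would serve equally well as a weight. It has to be used with the approximants of $\phi_1$, and as an exact weight rather than for a one-sided bound.

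Your derivation of the particular case by symmetry of the mixed operator is fine.
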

\begin{proof}
	Since the nature of the result is local and the manifold is locally flat, it is enough to prove the statement on a domain $ A\subseteq \H^n $. Namely we want to show that
	\[
	\mathbf{1}_{\{u>v\}}(\partial \partial_J u)\wedge T=\mathbf{1}_{\{u>v\}}(\partial \partial_J \max(u,v))\wedge T
	\]
	as Borel measures, where $ u,v\in \QPSH(A)\cap L^\infty_{\mathrm{loc}}(A) $ and $ T $ is a q-closed q-positive $ (2n-2,0) $-current.
	
	If $ u $ is continuous, $ \{u >v \} $ is an open subset of $ A $ and the result follows. If $ u $ is not continuous we take a sequence $ (u_j)_{j\in \N}\subset \QPSH(A_\epsilon)\cap C^\infty(A_\epsilon) $ decreasing to $ u $ (Proposition \ref{prop_convolution}). We have
	\[
	\mathbf{1}_{\{u_j>v\}}(\partial \partial_J \max(u_j,v))\wedge T=	\mathbf{1}_{\{u_j>v\}}(\partial \partial_J u_j)\wedge T\,.
	\]
	Let $ w_j=\sup(u_j-v,0) $ and $ w=\sup(u-v,0) $ and observe that $ (w_j)_{j\in \N} $ decreases to $ w $. Therefore, by continuity of the Monge--Amp\`{e}re operator (cf. \cite[Prop. 3.2]{Wan-Zhang})
	\[
	w(\partial \partial_J \max(u,v))\wedge T=\lim_{j\to +\infty} w_j(\partial \partial_J \max(u_j,v))\wedge T=	\lim_{j\to +\infty} w_j (\partial \partial_J u_j)\wedge T=	w(\partial \partial_J u)\wedge T
	\]
	in the sense of Borel measures. Since $ 1/(w+\epsilon) $ is bounded for every $ \epsilon>0 $ we have
	\[
	\frac{w}{w+\epsilon}(\partial \partial_J \max(u,v))\wedge T = \frac{w}{w+\epsilon}(\partial \partial_J u)\wedge T\,,
	\]
	and this allows to conclude letting $ \epsilon $ decrease to $0 $ because $ w/(w+\epsilon) $ increases to $ \mathbf{1}_{\{u>v\}} $.
%
\end{proof}

\begin{prop}\label{Prop:cps_principle}
	Let $ \phi_1,\dots,\phi_n,\psi \in \QPSH(M,\Omega)\cap L^\infty(M) $, then
	\[
	\int_{\{\phi_1>\psi\}} \MA_{\phi_1,\phi_2,\dots,\phi_n} \leq \int_{\{\phi_1>\psi\}} \MA_{\psi,\phi_2,\dots,\phi_n}\,.
	\]
    In particular
    \begin{equation}\label{eq:MAmass}
    \MA_{\phi_1,\dots,\phi_n}(M)=\int_M \Omega^n=1
    \end{equation}
    for any $\phi_1,\dots,\phi_n \in \QPSH(M,\Omega)\cap L^\infty(M)$.
\end{prop}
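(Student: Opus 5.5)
We first prove the mass identity \eqref{eq:MAmass} and then the comparison inequality. For the mass, we reduce to the smooth case by local regularisation. Each $\phi_i$ is first approximated by a decreasing sequence of smooth functions in $\QPSH(M,\Omega+\epsilon\,\Omega)$; concretely, we use a global approximation obtained by gluing local convolutions, or simply exploit the fact that $\MA_{\phi_1,\dots,\phi_n}-\Omega^n$ is $\partial$-exact. The latter is the cleaner route. By the inductive definition of the currents,
\[
\MA_{\phi_1,\dots,\phi_n}-\Omega^n=\sum_{i}\partial\partial_J\bigl(\phi_i\,(\Omega+\partial\partial_J\phi_{i+1})\wedge\dots\wedge(\Omega+\partial\partial_J\phi_n)\wedge\Omega^{i-1}\bigr),
\]
which is a telescoping sum in which each term is of the form $\partial T$ with $T$ a $(2n-1,0)$-current. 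Since $\int_M\partial T=0$ by holomorphicity of $\Theta$, the total mass equals $\int_M\Omega^n=1$. The telescoping requires checking that $\partial\partial_J\phi\wedge S=\partial\partial_J(\phi S)$ with $S$ q-closed, which is exactly the definition.

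For the comparison principle, we first treat the case where $\phi_1$ and $\psi$ are continuous, so that $U=\{\phi_1>\psi\}$ is open. Set $\psi_\epsilon=\max(\phi_1,\psi+\epsilon)$; it coincides with $\psi+\epsilon$ on a neighbourhood of $M\setminus U$, since there $\psi+\epsilon>\phi_1$. Let $T=\MA$ of the remaining arguments $\phi_2,\dots,\phi_n$, i.e.\ the product of the last $n-1$ factors. Then
\[
\int_M(\Omega+\partial\partial_J\psi_\epsilon)\wedge T=\int_M(\Omega+\partial\partial_J\phi_1)\wedge T
\]
by the mass identity. Using Proposition \ref{Prop:max_princ} on the open set $\{\psi+\epsilon>\phi_1\}$, the measures $\MA_{\psi_\epsilon,\dots}$ and $\MA_{\psi,\dots}$ agree there. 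Since $\{\phi_1>\psi+\epsilon\}\subset U$ and the max principle there gives $\MA_{\psi_\epsilon,\dots}=\MA_{\phi_1,\dots}$, we obtain
\[
\int_{\{\phi_1>\psi+\epsilon\}}\MA_{\phi_1,\dots}=1-\int_{\{\phi_1<\psi+\epsilon\}}\MA_{\psi,\dots}-\int_{\{\phi_1=\psi+\epsilon\}}\MA_{\psi_\epsilon,\dots}\le\int_{\{\phi_1\ge\psi+\epsilon\}}\MA_{\psi,\dots},
\]
where the last step again uses $\MA_{\psi,\dots}(M)=1$. Letting $\epsilon\searrow0$ along values avoiding atoms gives $\int_{U}\MA_{\phi_1,\dots}\le\int_{U}\MA_{\psi,\dots}$ by monotone convergence, since $\{\phi_1\ge\psi+\epsilon\}$ increases to $U$.

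The main obstacle is removing continuity, because $\{\phi_1>\psi\}$ is then only quasi-open and the measures need not converge on such sets. I would follow Bedford--Taylor: take smooth decreasing approximants $\phi_1^j\searrow\phi_1$ and $\psi^k\searrow\psi$, which are available locally by Proposition \ref{prop_convolution}; globally one would need a Demailly-type regularisation, possibly with a slight loss $\Omega\to(1+\delta)\Omega$ absorbed by rescaling. The second formulation avoids sets altogether: prove the equivalent inequality $\int_M h\,\mathbf 1_{\{\phi_1>\psi\}}$ by using the identity from the proof of Proposition \ref{Prop:max_princ} with the weights $w/(w+\epsilon)$, where $w=\max(\phi_1-\psi,0)$, together with the continuity of Theorem \ref{thm:cont} along decreasing sequences for $w_j\MA$. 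In this formulation the integrands are bounded functions times converging measures, and the final limit $\epsilon\to0$ is monotone. The delicate point is the upper/lower semicontinuity of $\int w_j\,d\mu_j$, which I expect to handle using quasi-continuity or capacity estimates from the local theory of \cite{Wan-Wang}.
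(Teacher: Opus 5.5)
Your mass computation (telescoping plus $\int_M\partial T=0$) and your second paragraph are, in substance, the paper's proof. The paper applies Proposition \ref{Prop:max_princ} to $\max(\phi_1,\psi)$ and uses total mass $1$ twice to get $\int_{\{\phi_1>\psi\}}\MA_{\phi_1,\dots}\le\int_{\{\phi_1\ge\psi\}}\MA_{\psi,\dots}$. It then replaces $\psi$ by $\psi+\epsilon$; you do the same with the shift done first.

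Where you went wrong is the diagnosis in your last paragraph: continuity is never used. Proposition \ref{Prop:max_princ} is an identity of Borel measures for \emph{arbitrary} bounded $\Omega$-quaternionic psh functions. Its proof already handles the non-open set $\{u>v\}$ through the weights $w/(w+\epsilon)$. Your chain of (in)equalities uses only that proposition (once on $\{\phi_1>\psi+\epsilon\}$, once with the roles swapped on $\{\psi+\epsilon>\phi_1\}$), the mass identity, and monotone convergence as $\epsilon\searrow0$. The openness of $\{\phi_1>\psi\}$, the remark that $\psi_\epsilon=\psi+\epsilon$ near $M\setminus U$, and the avoidance of atoms are all superfluous. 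So your second paragraph already proves the statement in full generality.

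You should drop the third paragraph. As written it leaves its ``delicate point'' unresolved. Its global variant would also stall, since no smooth regularisation with loss $\Omega\to(1+\delta)\Omega$ is available in this setting. Proposition \ref{Prop:approx} only produces approximants in $\QPSH(M,C\Omega)$ for some constant $C$ depending on $\|\phi\|_{L^\infty}$, not arbitrarily close to $1$.
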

\begin{proof}
	From Proposition \ref{Prop:max_princ} we have
    \[
    \begin{split}
	\int_{\{\phi_1>\psi\}} \MA_{\phi_1,\phi_2,\dots,\phi_n}&=\int_{\{\phi_1>\psi\}} \MA_{\max(\phi_1,\psi),\phi_2,\dots,\phi_n}=1-\int_{\{\phi_1\leq \psi\}} \MA_{\max(\phi_1,\psi),\phi_2,\dots,\phi_n}\\
    &\leq 1-\int_{\{\phi_1< \psi\}} \MA_{\psi,\phi_2,\dots,\phi_n}=\int_{\{\phi_1\geq \psi\}} \MA_{\psi,\phi_2,\dots,\phi_n}\,.
    \end{split}
    \]
	The desired inequality follows by replacing $ \psi $ with $ \psi+\epsilon $, so that
	\[
	\int_{\{\phi_1>\psi+\epsilon \}} \MA_{\phi_1,\phi_2,\dots,\phi_n}\leq \int_{\{\phi_1\geq \psi+\epsilon \}} \MA_{\psi,\phi_2,\dots,\phi_n} \leq \int_{\{\phi_1> \psi\}} \MA_{\psi,\phi_2,\dots,\phi_n}\,,
	\]
	and letting $ \epsilon \searrow 0 $.
    The equality \eqref{eq:MAmass} follows by an easy calculation together with \eqref{eq:normalisation}.
\end{proof}

The definition of the Monge--Amp\`{e}re operator can be extended to unbounded $ \Omega $-quaternionic psh functions as follows.

\begin{defn}
For $ \phi \in \QPSH(M,\Omega)$, we define the \emph{Monge--Ampère measure of $\phi$} as
$$
\MA_\phi:=\lim_{j\to +\infty} \mathbf{1}_{\{\phi>-j\}}\MA_{\max(\phi,-j)}\,.
$$
More generally, given $\phi_1,\dots,\phi_n\in \QPSH(M,\Omega)$, we define the \emph{quaternionic mixed Monge--Ampère measure of $\phi_1,\dots,\phi_n$} as
    $$
    \MA_{\phi_1,\dots, \phi_n}:=\lim_{j\to +\infty}\mathbf{1}_{\cap_{s=1}^n\{\phi_s>-j\}}\MA_{\max(\phi_1,-j),\dots,\max(\phi_n,-j)}\,.
    $$
The functions $ \max (\phi,-j)  \in \QPSH(M,\Omega)\cap L^\infty(M)$ are called the {\em canonical approximants} of $\varphi$.
\end{defn}
The next lemma shows that $\MA_\phi, \MA_{\phi_1,\dots,\phi_n}$ are well-defined positive Borel measures.


\begin{lem}\label{Lem:MAdef}
Let $\phi_1,\dots,\phi_n\in \QPSH(M,\Omega)$, then the sequence of positive Borel measures $ \mu_j:=\mathbf{1}_{\cap_{s=1}^n\{\phi_s>-j\}}\MA_{\max(\phi_1,-j),\dots, \max(\phi_n,-j)} $ is non-decreasing and $\mu_j(M)\leq 1$. In particular, $\MA_{\phi_1,\dots, \phi_n}$ is a positive Borel measure such that $ \MA_{\phi_1,\dots,\phi_n}(M)\leq 1$.
\end{lem}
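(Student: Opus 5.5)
Set $\phi_s^j:=\max(\phi_s,-j)$ and $U_j:=\bigcap_{s=1}^n\{\phi_s>-j\}$. The bound $\mu_j(M)\le 1$ is immediate from \eqref{eq:MAmass} in Proposition \ref{Prop:cps_principle}. Indeed, the $\phi_s^j$ lie in $\QPSH(M,\Omega)\cap L^\infty(M)$, so $\MA_{\phi_1^j,\dots,\phi_n^j}$ is a positive Borel measure of total mass $1$. Restricting it to the Borel set $U_j$ (a finite intersection of sets $\{\phi_s>-j\}$, which are Borel by upper semicontinuity) gives $\mu_j(M)\le 1$.

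For monotonicity, fix $j<k$. Since $U_j\subseteq U_k$, it suffices to show
\[
\mathbf{1}_{U_j}\MA_{\phi_1^j,\dots,\phi_n^j}=\mathbf{1}_{U_j}\MA_{\phi_1^k,\dots,\phi_n^k}.
\]
Then $\mu_j=\mathbf{1}_{U_j}\mu_k\le\mu_k$, because $\mu_k$ is positive. To prove the identity, I would replace one entry at a time using Proposition \ref{Prop:max_princ}. Note that $\phi_s^j=\max(\phi_s^k,-j)$ and that $\{\phi_s^k>-j\}=\{\phi_s>-j\}$. Applying Proposition \ref{Prop:max_princ} with first entry $\phi_1^k$ and $\psi\equiv-j$ (a constant, hence in $\QPSH(M,\Omega)\cap L^\infty(M)$) gives
\[
\mathbf{1}_{\{\phi_1>-j\}}\MA_{\phi_1^k,\phi_2^k,\dots,\phi_n^k}=\mathbf{1}_{\{\phi_1>-j\}}\MA_{\phi_1^j,\phi_2^k,\dots,\phi_n^k}.
\]
Multiplying by $\mathbf{1}_{U_j}$ and using the symmetry of the mixed operator, I repeat this for each slot $s=2,\dots,n$. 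Each step swaps $\phi_s^k$ for $\phi_s^j$ on $\{\phi_s>-j\}\supseteq U_j$, and after $n$ steps the identity follows.

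Since $(\mu_j)$ is a non-decreasing sequence of positive Borel measures with uniformly bounded mass, for every Borel set $B$ the limit $\mu(B):=\lim_j\mu_j(B)=\sup_j\mu_j(B)$ exists and satisfies $\mu(M)\le1$. Countable additivity of $\mu$ follows from monotone convergence: exchange the sup over $j$ with the countable sum (both are suprema of non-decreasing nonnegative quantities). Hence $\MA_{\phi_1,\dots,\phi_n}$ is a positive Borel measure of mass at most $1$.

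The only delicate point is the slot-by-slot application of the maximum principle. Proposition \ref{Prop:max_princ} is stated only for the first entry, so the symmetry of $\MA_{\phi_1,\dots,\phi_n}$ in its arguments is needed to move each slot to the front. One must also check that multiplying the measure identities by the indicator of the smaller set $U_j$ keeps them valid, which is clear since these are identities of Borel measures.
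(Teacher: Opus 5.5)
Your proof is correct and takes the same route as the paper: you apply the maximum principle (Proposition \ref{Prop:max_princ}) with the constant $\psi\equiv -j$ to get $\mu_j=\mathbf{1}_{U_j}\mu_k\le\mu_k$, and you get the mass bound from \eqref{eq:MAmass}. The paper writes this out only for $\phi_1=\dots=\phi_n$ and calls the mixed case a simple adaptation; your slot-by-slot use of symmetry is exactly that adaptation, and it is also how the ``in particular'' part of Proposition \ref{Prop:max_princ} is obtained.
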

\begin{proof}
We prove the result when $\phi_s=\phi$ for all $s=1,\dots,n$ as the general case follows by a simple adaptation of the proof. Denoting then by $\phi_j:=\max(\phi,-j)$ the canonical approximants of $\phi$, Proposition \ref{Prop:max_princ} implies
\[
{\mathbf 1}_{\{\phi_j>-k\}}\MA_{\phi_j}={\mathbf 1}_{\{\phi_j>-k\}}\MA_{\max(\phi_j,-k)}\,;
\]
hence, for $ j\geq k $ we obtain
\[
{\mathbf 1}_{\{\phi>-j\}}\MA_{\phi_j}
\geq
{\mathbf 1}_{\{\phi>-k\}}\MA_{\phi_j}={\mathbf 1}_{\{\phi>-k\}}\MA_{\phi_k}\,.
\]
This shows that the sequence $ \mu_j:=\mathbf{1}_{\{\phi>-j\}}\MA_{\phi_j} $ is non decreasing. As $\mu_j\leq \MA_{\phi_j}$, by \eqref{eq:MAmass} we get $\mu_j(M)\leq 1$.
Thus, 
\[
\MA_\phi:=\lim_{j\to +\infty }\mu_j=\lim_{j\to +\infty} \mathbf{1}_{\{\phi>-j\}}\MA_{\phi_j}
\]
is itself a positive Borel measure with total mass bounded from above by $ 1 $.
\end{proof}

For later use we prove the following generalisation of Proposition \ref{Prop:max_princ}.

\begin{lem}\label{lem:max_princ_Unbounded}
    Let $\phi_1,\dots,\phi_n\in\QPSH(M,\Omega)$ and $\psi\in\QPSH(M,\Omega)\cap L^\infty(M)$. Then 
    $$
    \mathbf{1}_{\{\phi_1>\psi\}}\MA_{\phi_1,\phi_2,\dots,\phi_n}=\mathbf{1}_{\{\phi_1>\psi\}}\MA_{\max(\phi_1,\psi),\phi_2,\dots,\phi_n}.
    $$
\end{lem}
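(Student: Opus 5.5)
We reduce to the bounded case, Proposition \ref{Prop:max_princ}, using the canonical approximants and the definition of the mixed Monge--Amp\`ere measure of unbounded functions.

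Since $\psi$ is bounded, fix $j_0\in\N$ with $\psi>-j_0$ on $M$. For $j\geq j_0$ set $\phi_s^j:=\max(\phi_s,-j)$ for $s=1,\dots,n$, and $E_j:=\cap_{s=1}^n\{\phi_s>-j\}$. Note that $\max(\phi_1,\psi)\geq\psi>-j$, so its canonical approximant at level $j$ is $\max(\max(\phi_1,\psi),-j)=\max(\phi_1^j,\psi)$. Moreover
\[
\{\max(\phi_1,\psi)>-j\}\cap\cap_{s\ge2}\{\phi_s>-j\}=\cap_{s\ge 2}\{\phi_s>-j\}\supseteq E_j .
\]
Also, on $\{\phi_1>\psi\}$ we have $\phi_1>\psi>-j$, hence
\[
\mathbf{1}_{\{\phi_1>\psi\}}\mathbf 1_{E_j}=\mathbf{1}_{\{\phi_1>\psi\}}\mathbf 1_{E'_j},\qquad E_j':=\cap_{s\geq2}\{\phi_s>-j\}.
\]
This identity is what makes the two truncating sets agree once we restrict to $\{\phi_1>\psi\}$.

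Next, $\{\phi_1>\psi\}=\{\phi_1^j>\psi\}$ for $j\ge j_0$. Applying Proposition \ref{Prop:max_princ} to the bounded functions $\phi_1^j,\dots,\phi_n^j,\psi$ gives
\[
\mathbf 1_{\{\phi_1>\psi\}}\MA_{\phi_1^j,\phi_2^j,\dots,\phi_n^j}=\mathbf 1_{\{\phi_1>\psi\}}\MA_{\max(\phi_1^j,\psi),\phi_2^j,\dots,\phi_n^j}.
\]
Multiplying by $\mathbf 1_{E_j}=\mathbf 1_{E_j'}$ (valid on $\{\phi_1>\psi\}$) yields
\[
\mathbf 1_{\{\phi_1>\psi\}}\mu_j=\mathbf 1_{\{\phi_1>\psi\}}\nu_j,
\]
where $\mu_j$ and $\nu_j$ are the truncated measures of Lemma \ref{Lem:MAdef} for $(\phi_1,\dots,\phi_n)$ and for $(\max(\phi_1,\psi),\phi_2,\dots,\phi_n)$ respectively.

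By Lemma \ref{Lem:MAdef}, both sequences are non-decreasing and converge setwise to $\MA_{\phi_1,\dots,\phi_n}$ and $\MA_{\max(\phi_1,\psi),\phi_2,\dots,\phi_n}$. Restriction to the Borel set $\{\phi_1>\psi\}$ commutes with monotone limits of measures: for any Borel $B$, $\mu_j(B\cap\{\phi_1>\psi\})$ increases to the limit measure of $B\cap\{\phi_1>\psi\}$. Letting $j\to\infty$ therefore gives the claim.

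The only delicate point is the bookkeeping that the truncating sets coincide on $\{\phi_1>\psi\}$, together with checking that $\{\phi_1>\psi\}$ is Borel. The latter holds because $\phi_1$ is upper semicontinuous and $\psi$ is Borel. The rest is a direct reduction to Proposition \ref{Prop:max_princ}.
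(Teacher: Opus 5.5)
Your proof is correct and follows essentially the same route as the paper: truncate with the canonical approximants, apply the bounded maximum principle (Proposition \ref{Prop:max_princ}) to $\max(\phi_1,-j),\dots,\max(\phi_n,-j)$ and $\psi$, check that the truncating sets agree on $\{\phi_1>\psi\}$, and pass to the monotone limit of Lemma \ref{Lem:MAdef}. The only cosmetic difference is that you take $j$ large enough that $\psi>-j$, which gives $\{\phi_1>\psi\}=\{\max(\phi_1,-j)>\psi\}$ directly; the paper instead gets the analogous identity for every $j$ by intersecting with $\{\phi_1>-j\}$.
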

\begin{proof}
By definition of $\MA_{\phi_1,\dots,\phi_n}$ we have
$$
\mathbf{1}_{\{\phi_1>\psi\}}\MA_{\phi_1,\phi_2,\dots,\phi_n}= \lim_{j\to +\infty}\mathbf{1}_{ \{\phi_1>\psi\}}\mathbf{1}_{\cap_{s=1}^n\{\phi_s>-j\}}\MA_{\max(\phi_1,-j),\dots,\max(\phi_n,-j)}\,.
$$
We then observe that
$$
\{\phi_1>\psi\}\cap \{\phi_1>-j\}=\{\max(\phi_1,-j)>\psi\}\cap \{\phi_1>-j\}=\{w>-j\}\cap \{\phi_1>\psi\}\,,
$$
where we set $w:=\max(\phi_1,\psi)$. Set also $\varphi_2^j:=\max(\varphi_2,-j), \dots,\varphi_n^j:=\max(\varphi_n,-j)$ to lighten notations. It follows from Proposition \ref{Prop:max_princ} that
\[
\begin{split}
    \mathbf{1}_{\{\phi_1>\psi\}}\MA_{\phi_1,\phi_2,\dots,\phi_n}
    &=\lim_{j\to +\infty}\mathbf{1}_{\{\max(\phi_1,-j)>\psi\}}\mathbf{1}_{\cap_{s=1}^n \{\phi_s>-j\}}\MA_{\max(\phi_1,-j),\phi_2^j,\dots,\phi_n^j}\\
    &=\lim_{j\to +\infty}\mathbf{1}_{\{\max(\phi_1,-j)>\psi\}}\mathbf{1}_{\cap_{s=1}^n \{\phi_s>-j\}}\MA_{\max(\max(\phi_1,-j),\psi),\phi_2^j,\dots,\phi_n^j}\\
    &=\lim_{j\to +\infty}\mathbf{1}_{\{w>-j\}\cap\{\phi_1>\psi\}}\mathbf{1}_{\cap_{s=2}^n \{\phi_s>-j\}}\MA_{\max(w,-j),\phi_2^j,\dots,\phi_n^j}\\
    &=\mathbf{1}_{\{\phi_1>\psi\}}\MA_{w,\phi_2,\dots,\phi_n}\,,
\end{split}
\]
which concludes the proof.
\end{proof}

A subset of $M$ is called {\em $Q$-polar} if it is contained in the $-\infty$-locus of an $\Omega$-quaternionic psh function (cf. \cite{Wan-Kang}).

\begin{prop}\label{prop:NonPluripolar}
    Let $\phi_1,\dots,\phi_n \in \QPSH(M,\Omega)$, then the mixed Monge--Ampère operator $\MA_{\phi_1,\dots,\phi_n}$ does not put mass on $Q$-polar sets, i.e. $ \MA_{\phi_1,\dots,\phi_n}(E)=0 $ for all $Q$-polar sets $E\subseteq M$.
\end{prop}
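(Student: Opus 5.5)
Our plan is to follow the standard complex argument (non-pluripolar product of Bedford--Taylor / Guedj--Zeriahi): reduce to bounded functions via the definition of $\MA_{\phi_1,\dots,\phi_n}$ as an increasing limit, and then use the fact that Monge--Amp\`ere measures of bounded functions do not charge $Q$-polar sets, proved through a capacity-type estimate.

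\textbf{Step 1: bounded case.} Let $E\subseteq\{\psi=-\infty\}$ with $\psi\in\QPSH(M,\Omega)$, and normalise $\sup_M\psi\le 0$. Let $u_1,\dots,u_n\in\QPSH(M,\Omega)\cap L^\infty(M)$ with $-C\le u_s\le 0$. For $k\ge 1$ the set $\{\psi<-k\}$ contains $E$. We bound its mass via the comparison principle. Set $\psi_k:=\max(\psi/k,-1)$. This lies in $\QPSH(M,\Omega)$ for $k\ge1$, since $\psi/k=(1-1/k)\cdot 0+(1/k)\psi$ is a convex combination and $\QPSH(M,\Omega)$ is closed under maxima. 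It takes values in $[-1,0]$ and equals $-1$ on $\{\psi<-k\}$. Now compare $u_1$ with $v:=(1-\epsilon)u_1+\epsilon\psi_k$ for a small fixed $\epsilon$, or more directly test against $-\psi_k\ge 0$. The cleanest route is a Chern--Levine--Nirenberg type estimate:
\[
\MA_{u_1,\dots,u_n}(\{\psi<-k\})\le \int_M(-\psi_k)\,\MA_{u_1,\dots,u_n}.
\]
We then integrate by parts $n$ times, which is justified by local regularisation together with Theorem \ref{thm:cont} and the vanishing $\int_M\partial T=0$. This replaces each $\Omega+\partial\partial_J u_s$ by $\Omega$ plus terms of the form $u_s\,\partial\partial_J(\cdot)$. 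Using $|u_s|\le C$ and the total mass identity \eqref{eq:MAmass}, we obtain a bound $C'\int_M(-\psi_k)\Omega^n+ C''/k$-type quantities. Alternatively, and more simply, we bound $\int(-\psi_k)\MA\le \frac{1}{k}\int(-\psi)\,\MA_{u_1,\dots,u_n}$ and show $\int(-\psi)\MA_{u}<\infty$ by approximating $\psi$ by $\max(\psi,-j)$ and integrating by parts. Either way the mass tends to $0$ as $k\to\infty$, so $\MA_{u_1,\dots,u_n}(E)=0$.

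\textbf{Step 2: the key integrability bound.} The main obstacle is to show that $\sup_j\int_M -\max(\psi,-j)\,\MA_{u_1,\dots,u_n}<\infty$ when the $u_s$ are bounded. We plan to write $\MA_{u_1,\dots,u_n}=\Omega\wedge\MA'+\partial\partial_J u_1\wedge \MA'$, where $\MA'=(\Omega+\partial\partial_J u_2)\wedge\cdots$. Setting $\psi^j:=\max(\psi,-j)$, we integrate by parts:
\[
\int(-\psi^j)\,\partial\partial_J u_1\wedge\MA'=\int u_1\,(-\partial\partial_J\psi^j)\wedge\MA'\le C\int(\Omega+\partial\partial_J\psi^j)\wedge\MA'+C\int\Omega\wedge\MA'.
\]
Here $-\partial\partial_J\psi^j=\Omega-(\Omega+\partial\partial_J\psi^j)$ and $|u_1|\le C$. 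Both terms have mass $1$ by \eqref{eq:MAmass}. Iterating over $s$ reduces the problem to $\int(-\psi^j)\Omega^n\le\int(-\psi)\Omega^n<\infty$, which holds since $\psi\in L^1$. This gives the uniform bound. The induction must be organised with the mixed products of $\Omega$ and the $\Omega+\partial\partial_J u_s$, all of which are q-positive and have total mass $1$.

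\textbf{Step 3: unbounded case.} For general $\phi_s$, $\MA_{\phi_1,\dots,\phi_n}$ is the increasing limit of $\mu_j=\mathbf 1_{\cap_s\{\phi_s>-j\}}\MA_{\max(\phi_1,-j),\dots,\max(\phi_n,-j)}$ by Lemma \ref{Lem:MAdef}. Each $\mu_j$ is dominated by a Monge--Amp\`ere measure of bounded functions, so $\mu_j(E)=0$ by Step 1. Passing to the increasing limit gives $\MA_{\phi_1,\dots,\phi_n}(E)=0$.
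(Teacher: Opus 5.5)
Your proof is correct, but it handles the bounded case by a different route from the paper's. The reduction to bounded functions (your Step 3) is the same as the paper's.

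\textbf{The paper's route.} It localises. It invokes the quaternionic Josefson theorem of Wan--Kang to work on a domain $A\subset\H^n$, then uses Alesker's local Chern--Levine--Nirenberg inequality
\[
\int_K|v|\,\partial\partial_J u_1\wedge\cdots\wedge\partial\partial_J u_n\le C\lVert v\rVert_{L^1(A')}\prod_i\lVert u_i\rVert_{L^\infty(A')}
\]
for $C^2$ functions $u_i$. It extends this to bounded $u_i$ by local regularisation and the local continuity theorem.

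\textbf{Your route.} You prove a global CLN-type bound directly on $M$. You peel off one factor $\Omega+\partial\partial_J u_m$ at a time, integrate by parts against $\max(\psi,-j)$, use the mass identity \eqref{eq:MAmass}, and apply monotone convergence in $j$. For $\psi\le 0$ and $-C\le u_s\le 0$ this gives
\[
\int_M(-\psi)\,\MA_{u_1,\dots,u_n}\le\int_M(-\psi)\,\Omega^n+n\max_s\lVert u_s\rVert_{L^\infty}.
\]

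\textbf{What each buys.} Your argument is self-contained and uses only tools the paper already relies on globally: the symmetric integration by parts for bounded $\Omega$-quaternionic psh functions (as in Proposition \ref{prop_Echi_incr} and Section \ref{Sec 8}), and \eqref{eq:MAmass}, which uses the triviality of $K_I(M)$. Note that a global integration by parts is justified through the global approximants of Proposition \ref{Prop:approx} plus Theorem \ref{thm:cont}, not by local regularisation alone. In exchange, you bypass Josefson's theorem and the local CLN inequality. You also get an explicit quantitative statement: every $\psi\in\QPSH(M,\Omega)$ lies in $L^1(\MA_{u_1,\dots,u_n})$ for bounded $u_s$. The paper's local argument needs neither compactness nor global integration by parts, and it also covers locally $Q$-polar sets and Monge--Amp\`ere measures of bounded quaternionic psh functions on domains of $\H^n$.

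\textbf{Presentation.} The comparison with $v=(1-\epsilon)u_1+\epsilon\psi_k$ at the start of Step 1 is a false start and can be dropped. The two estimates you then give, via $\psi_k$ and via $\frac1k\int_M(-\psi)\,\MA_{u_1,\dots,u_n}$, are the same bound. Step 2 is what actually carries the proof.
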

\begin{proof}
By definition of the quaternionic mixed Monge--Ampère measure $\MA_{\phi_1,\dots,\phi_n}$ it is sufficient to treat the case $\phi_i\in \QPSH(M,\Omega)\cap L^\infty(M)$ (cf. Lemma \ref{Lem:MAdef}). Moreover, as a consequence of the quaternionic version of Josefson's Theorem, see \cite[Thm. 1.1]{Wan-Kang}, it is enough to work locally on a domain $A\subset \H^n$. Then thanks to the Chern-Levine-Nirenberg inequalities proved in \cite[Prop. 6.3]{Alesker 2012} and an adaptation of the proof in the complex case (cf. \cite[Lem. 2.2]{Wan-Kang}) we obtain that for any subdomain $A'\Subset A$ and any compact set $K\subset A'$ there exists a constant $C=C(A',K)>0 $ such that
\begin{equation}
    \label{eqn:CLN}
    \int_K \lvert v\rvert \partial \partial_J u_1 \wedge \cdots \wedge \partial \partial_J u_n\leq C \lVert v \rVert_{L^1(A')} \lVert u_1 \rVert_{L^\infty(A')}\cdots \lVert u_n\rVert_{L^\infty(A')}
\end{equation}
for any $v\in \QPSH(A), u_i\in \QPSH(A)\cap C^2(A)$. Combining Proposition \ref{prop_convolution} with the local version of Theorem \ref{thm:cont} (cf. \cite[Prop. 3.2]{Wan-Zhang}) we can extend \eqref{eqn:CLN} to the case $u_i\in\QPSH(A)\cap L^\infty(A)$. In particular the quaternionic mixed Monge--Ampère measure $\partial \partial_J u_1\wedge \cdots \wedge \partial \partial_J u_n$ puts no mass on $Q$-polar sets when $v\in \QPSH(A)$ and $u_i\in\QPSH(A)\cap L^\infty(A)$, which concludes the proof.
\end{proof}



\section{The finite energy class \texorpdfstring{$\mathcal{E}^1(M,\Omega)$}{E}}\label{sec 4}
Let $ (M,I,J,K,\Omega) $ be a compact locally flat HKT manifold with $K_I(M)$ holomorphically trivial.

For later use we define $\partial \phi \wedge \partial_J \phi\wedge T$ for a given $\varphi\in \QPSH(M,\Omega)\cap L^\infty(M)$ and a q-closed q-positive current $T$, as 
\begin{align*}
&\partial \phi \wedge \partial_J \phi\wedge T:=\frac{1}{2}\partial \partial_J(\phi^2)\wedge T-\phi \partial \partial_J \phi \wedge T\,.
\end{align*}
\begin{lem}\label{lem:fakegradient}
    Let $T$ be a q-closed q-positive $(2n-2,0)$-current. If $\varphi$ is a linear combination of bounded $\Omega$-quaternionic psh functions, then
    $$
    \int_M\partial \varphi \wedge \partial_J \varphi\wedge T\geq 0.
    $$ 
\end{lem}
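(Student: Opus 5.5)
Write $\varphi=\sum_i a_i u_i$ with $u_i\in\QPSH(M,\Omega)\cap L^\infty(M)$ and $a_i\in\R$. The definition of $\partial\varphi\wedge\partial_J\varphi\wedge T$ extends to such $\varphi$ by bilinearity: $\varphi^2$ is a combination of the products $u_iu_k$, and $\partial\partial_J(u_iu_k)\wedge T$ is made sense of through the polarised expression $\partial u_i\wedge\partial_J u_k+\partial u_k\wedge \partial_J u_i$. The plan is to prove the inequality first for smooth $\varphi$ and then pass to the limit using the monotone continuity of Theorem \ref{thm:cont}. Since every term is globally defined, we can work on $M$ and regularise with a partition of unity in quaternionic charts.

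\textbf{Smooth case.} If $\varphi$ is smooth and real-valued, then $\partial\varphi\wedge\partial_J\varphi$ is a q-real $(2,0)$-form. It is q-nonnegative, since on vectors $(Z,J\bar Z)$ it evaluates to $|\partial\varphi(Z)|^2\geq 0$ (up to the paper's normalisation of $\partial_J=J^{-1}\bar\partial J$). Hence $\partial\varphi\wedge\partial_J\varphi\wedge T$ is a positive measure, because $T$ is q-positive and pairing a q-positive current with a weakly q-positive form gives a nonnegative quantity. For smooth $\varphi$ the formula
\[
\tfrac12\partial\partial_J(\varphi^2)-\varphi\partial\partial_J\varphi=\partial\varphi\wedge\partial_J\varphi
\]
holds pointwise, so the claim is immediate in this case.

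\textbf{Approximation.} For general $\varphi$, I would first reduce to globally smooth $\Omega$-quaternionic psh approximants. On a locally flat manifold, each bounded $u_i$ can be approximated by a decreasing sequence $u_i^\epsilon$ of continuous (even smooth) functions in $\QPSH(M,\Omega)$. One route is to glue local convolutions (Proposition \ref{prop_convolution}) via a Richberg-type maximum construction, possibly losing a small multiple of $\Omega$, which is harmless after rescaling by $(1-\delta)$. I would set $\varphi_\epsilon=\sum a_iu_i^\epsilon$. Expanding $\int_M\partial\varphi_\epsilon\wedge\partial_J\varphi_\epsilon\wedge T$ bilinearly gives terms of the form
\[
\int_M u_i^\epsilon\,\partial\partial_J u_k^\epsilon\wedge T .
\]
The other terms, $\int_M\partial\partial_J(u_i^\epsilon u_k^\epsilon)\wedge T$, vanish. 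This follows by integration by parts, using that $T$ is q-closed and that $\int_M\partial S=0$ thanks to the holomorphic volume form $\Theta$.

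Each term $u_i^\epsilon\,\partial\partial_Ju_k^\epsilon\wedge T$ converges weakly to $u_i\,\partial\partial_Ju_k\wedge T$ by Theorem \ref{thm:cont}, which applies because the sequences decrease. Writing $\partial\partial_Ju_k=(\Omega+\partial\partial_Ju_k)-\Omega$ deals with the fact that $u_k$ itself is only $\Omega$-psh. Integrating against the constant $1$ (on compact $M$, weak convergence of measures of uniformly bounded mass) passes the limit. Since the smooth case gives $\geq0$ for every $\epsilon$, the limit $\int_M\partial\varphi\wedge\partial_J\varphi\wedge T$ is also $\geq0$.

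\textbf{Main obstacle.} The hardest step is producing the global decreasing smooth approximation in $\QPSH(M,\Omega)$ on a locally flat HKT manifold. If that is unavailable, I would instead work locally: cut $T$ off with the partition of unity $\rho_j$ and apply the local convolution only inside charts. The price is handling the error terms involving $\partial\rho_j$, which must vanish in the limit by a Cauchy–Schwarz type estimate combined with Theorem \ref{thm:cont}.
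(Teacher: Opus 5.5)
Your overall architecture matches the paper's proof. Both use positivity of $\partial\varphi_\epsilon\wedge\partial_J\varphi_\epsilon\wedge T$ for smooth $\varphi_\epsilon$, the identity $\int_M\partial\varphi\wedge\partial_J\varphi\wedge T=-\int_M\varphi\,\partial\partial_J\varphi\wedge T$ coming from $\int_M\partial S=0$, and passage to the limit via Theorem \ref{thm:cont}.

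The gap is the approximation step, which you flag but do not resolve. You claim that each bounded $u_i$ is a decreasing limit of smooth functions in $\QPSH(M,\Omega)$, obtained by a Richberg-type max-gluing of local convolutions with a small loss. Richberg's construction needs $u_i$ to be continuous. Near $\partial U_j$ the $j$-th local regularisation must be strictly dominated by another one, so the local mollifications must be uniformly close to each other. For a merely bounded upper semicontinuous $u_i$ they converge only pointwise, so this fails. This is exactly the B\l ocki--Ko\l odziej-type regularisation in $\QPSH(M,\Omega)$ that the paper says is not known in the quaternionic setting. Your fallback (cutting $T$ off by $\rho_j$ and controlling the $\partial\rho_j$-terms with a Cauchy--Schwarz estimate) is not carried out. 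Moreover, the Cauchy--Schwarz inequality of Lemma \ref{cor:CS} is itself derived from the statement you are proving, so it cannot be used here.

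The missing observation is that you need neither $\Omega$-plurisubharmonicity nor a small loss; any fixed loss is harmless. For bounded $u\in\QPSH(M,\Omega)$, the paper (Proposition \ref{Prop:approx}) takes the local convolutions $u_{j,\epsilon}:=(u+v_j)\star\chi_\epsilon-v_j$, where $\partial\partial_Jv_j=\Omega$ on the chart $B_j$. It glues them by $u_\epsilon:=\log\bigl(\sum_j\rho_je^{u_{j,\epsilon}}\bigr)$. This function is smooth and decreases to $u$, because each $u_{j,\epsilon}$ does. It uses only boundedness, not continuity. The price is that $u_\epsilon\in\QPSH(M,C\Omega)$, with $C$ depending on $\|u\|_{L^\infty}$. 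Your smooth-case positivity uses no plurisubharmonicity at all, and Theorem \ref{thm:cont} applies verbatim on the locally flat HKT manifold $(M,I,J,K,C\Omega)$. Alternatively, since the expression is quadratic in $\varphi$, you can replace each $u_i$ by $u_i/C$, whose approximants $u_i^\epsilon/C$ lie in $\QPSH(M,\Omega)$. With this approximation in place, the rest of your argument goes through as written.
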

\begin{proof}
   Write $\phi=\sum_j\lambda_j \phi_j$, for $\lambda_j\in \R$ and $\phi_j\in \QPSH(M,\Omega)\cap L^\infty(M)$. Proposition \ref{Prop:approx} implies that such a $\varphi$ can be approximated with a family of smooth functions $\{\varphi_\epsilon\}_{\varepsilon>0}$ which are linear combinations of quaternionic psh functions. It follows from Theorem \ref{thm:cont} that 

\[
\int_M \partial \phi \wedge \partial_J \phi  \wedge T=-
\int_M \phi\partial \partial_J \phi\wedge T=-\lim_{\varepsilon\searrow 0} \int_M \phi_\varepsilon \partial \partial_J \phi_\varepsilon \wedge T=\lim_{\varepsilon\searrow 0} \int_M \partial \phi_\epsilon \wedge \partial_J \phi_\epsilon  \wedge T \geq 0\,,
\]
where in the last step we have used that $T$ is a q-positive current and that the $(2,0)$-form $\partial \phi_\epsilon \wedge \partial_J \phi_\epsilon$ regarded as a current is q-positive as well. 
\end{proof}

We introduce the following classes of $\Omega$-quaternionic plurisubharmonic functions:
\[
\begin{aligned}
\mathcal{E}(M,\Omega)&:=\left\{ \phi \in \QPSH(M,\Omega) \mid \int_M \MA_{\varphi}=1
\right\}\,,\\
\mathcal{E}^1(M,\Omega)&:=\{ \phi \in \mathcal{E}(M,\Omega) \mid \phi \in L^1(\MA_\phi) \}\,. 
\end{aligned}
\]
The class $ \mathcal{E}^1(M,\Omega) $ is invariant by translations, i.e. $ \phi \in \mathcal{E}^1(M,\Omega) $ if and only if $ \phi+c\in \mathcal{E}^1(M,\Omega) $ for any constant $ c\in \R $.
We also define the \emph{quaternionic Monge--Amp\`{e}re energy functional} $ E \colon \QPSH(M,\Omega)\cap L^\infty(M)\to \R $ as 
\begin{equation}\label{eqn:energy}
E(\phi):=\frac{1}{n+1
}\sum_{j=0}^n\int_M \phi\, (\Omega+\partial \partial_J\varphi)^j \wedge \Omega^{n-j} \,.
\end{equation}
The definition of the energy extends to $ \QPSH(M,\Omega) $ by setting
\[
E(\phi):=\inf \{ E(\psi) \mid \phi \leq \psi\in \QPSH(M,\Omega)\cap L^\infty(M) \}\,,
\]
this is coherent with the following monotonicity of $ E $.

\begin{prop}\label{prop_Echi_incr}
The energy \eqref{eqn:energy} is non-decreasing and concave. Furthermore for any non-positive $ \phi \in \QPSH(M,\Omega)\cap L^\infty(M) $ we have 
\[
\int_M \phi\, \MA_\phi \leq E(\phi) \leq \frac{1}{n+1
} \int_M \phi\, \MA_\phi\,.
\]
Moreover, $ E $ is upper semicontinuous in the $ L^1 $-topology and it is continuous along decreasing sequences.
\end{prop}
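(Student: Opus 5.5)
The key tool is the first-variation formula for $E$ on bounded functions. For $\phi,\psi\in\QPSH(M,\Omega)\cap L^\infty(M)$ we set $\omega_\phi:=\Omega+\partial\partial_J\phi$. Using that $\int_M\partial T=0$ for $(2n-1,0)$-currents, we expand $E(\psi)-E(\phi)$ as a telescoping sum:
\[
E(\psi)-E(\phi)=\frac{1}{n+1}\sum_{j=0}^n\int_M(\psi-\phi)\,\omega_\psi^j\wedge\omega_\phi^{n-j}.
\]
This is the standard algebraic identity $a^{n+1}-b^{n+1}=(a-b)\sum a^jb^{n-j}$. The integrations by parts behind it, which move $\partial\partial_J$ from one factor to another, are justified first for smooth $\phi,\psi$. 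They then pass to bounded functions by the regularisation of Proposition \ref{Prop:approx} combined with Theorem \ref{thm:cont}.

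\textbf{Monotonicity.} If $\phi\le\psi$, every term in the sum is nonnegative, since the mixed measures are positive. So $E$ is non-decreasing.

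\textbf{Concavity.} Consecutive terms satisfy
\[
\int(\psi-\phi)\,\omega_\psi^{j+1}\wedge\omega_\phi^{n-j-1}-\int(\psi-\phi)\,\omega_\psi^{j}\wedge\omega_\phi^{n-j}
=-\int\partial(\psi-\phi)\wedge\partial_J(\psi-\phi)\wedge T_j\le0,
\]
where $T_j=\omega_\psi^j\wedge\omega_\phi^{n-j-1}$. The sign comes from Lemma \ref{lem:fakegradient}, applied to the linear combination $\psi-\phi$ of bounded $\Omega$-psh functions. Hence the terms of the sum decrease in $j$, which gives
\[
\int(\psi-\phi)\,\MA_\psi\le E(\psi)-E(\phi)\le\int(\psi-\phi)\,\MA_\phi .
\]
The right-hand inequality says that $E$ lies below its tangent at $\phi$. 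Applying it at the point $\phi_t=(1-t)\phi+t\psi$ toward both $\phi$ and $\psi$ and combining the two inequalities gives concavity along segments.

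\textbf{Bounds for non-positive $\phi$.} Take $\psi=0$ in the two-sided bound above and use $E(0)=0$. This yields $\int\phi\,\MA_\phi\le E(\phi)\le\int\phi\,\Omega^n$. That is not yet the stated upper bound. Instead, look directly at the definition $E(\phi)=\frac1{n+1}\sum_j\int\phi\,\omega_\phi^j\wedge\Omega^{n-j}$. Every summand is $\le0$ because $\phi\le0$, so $E(\phi)$ is at most the single $j=n$ summand, which is $\frac1{n+1}\int\phi\,\MA_\phi$. For the lower bound, the summands $\int\phi\,\omega_\phi^j\wedge\Omega^{n-j}$ are non-increasing in $j$ by the same gradient computation (with $\psi=0$). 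So each summand is $\ge\int\phi\,\MA_\phi$, and averaging the $n+1$ summands gives $E(\phi)\ge\int\phi\,\MA_\phi$.

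\textbf{Continuity along decreasing sequences.} For bounded $\phi_k\searrow\phi$, each term $\int\phi_k\,\omega_{\phi_k}^j\wedge\Omega^{n-j}$ converges by Theorem \ref{thm:cont}, tested against the constant function $1$ on compact $M$. For unbounded limits we argue from the definition of $E$ as an infimum together with monotonicity. Given bounded $\psi\ge\phi$, the functions $\max(\phi_k,\psi-\epsilon)$ decrease to $\max(\phi,\psi-\epsilon)$, and $E(\phi_k)$ is a non-increasing sequence bounded below by $E(\phi)$. Applying bounded continuity to the canonical approximants $\max(\phi_k,-m)\searrow\max(\phi,-m)$ then gives $\lim_k E(\phi_k)=E(\phi)$ via a diagonal argument.

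\textbf{Upper semicontinuity in $L^1$.} Suppose $\phi_k\to\phi$ in $L^1$. Set $\tilde\phi_k:=(\sup_{l\ge k}\phi_l)^*$. By Proposition \ref{prop_basic} these are $\Omega$-psh and decrease to $\phi$, using Hartogs' Lemma \ref{lem:rel_cptness} together with the properties in Proposition \ref{prop_basic}. Monotonicity gives $E(\phi_k)\le E(\tilde\phi_k)$, and continuity along decreasing sequences gives $E(\tilde\phi_k)\to E(\phi)$. Hence $\limsup_k E(\phi_k)\le E(\phi)$.

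\textbf{Main obstacle.} The hardest step is justifying the integrations by parts for merely bounded functions, which is needed both for the variation formula and for the gradient identity. The plan is to prove them for smooth approximants and pass to the limit with Theorem \ref{thm:cont}. The difficulty is that the approximations of Proposition \ref{Prop:approx} may be only local or non-monotone, so the limit step has to be handled with care.
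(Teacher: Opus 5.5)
Your proposal is correct and is essentially the paper's proof. The first-variation/cocycle formula plus the sign from Lemma \ref{lem:fakegradient} give monotonicity, concavity and the two bounds, and upper semicontinuity comes from $(\sup_{l\ge k}\phi_l)^*\searrow\phi$ together with monotonicity and continuity along decreasing sequences. The regularisation you worry about is in fact global and decreasing, only $C\Omega$-psh, which rescaling handles as in Lemma \ref{lem:fakegradient}.

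The one difference is the order of the last two steps. You prove decreasing continuity for unbounded limits first, from the infimum definition. The cleanest form is $E(\phi_k)\le E(\max(\phi_k,\psi))\to E(\psi)$ for every bounded $\psi\ge\phi$, so your $\epsilon$-shift and ``diagonal argument'' are unnecessary. The paper instead truncates to $\max(\varphi_j,-k)$ before taking upper envelopes, and recovers decreasing continuity afterwards from upper semicontinuity and monotonicity.
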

\begin{proof}
Let $ \phi,\psi \in \QPSH(M,\Omega)\cap L^\infty(M) $ and assume $ \phi\leq \psi $. Set $ \phi_t:=(1-t)\phi+t\psi\in \QPSH(M,\Omega)\cap L^\infty(M) $. From a straightforward computation and Lemma \ref{lem:fakegradient} we obtain
\begin{equation}\label{eq_primitive}
	\frac{d}{dt} E(\phi_t)=
	\int_M \dot \phi _t\, \MA_{\phi_t}\geq 0\,,
\end{equation}
\[
	\frac{d^2}{dt^2}E(\phi_t)=-n\int_M \partial\dot \phi_t \wedge \partial_J\dot \phi_t \wedge (\Omega+\partial \partial_J \phi_t)^{n-1}\leq 0\,,
\]
showing that the energy is non-decreasing and concave.

Let $ \phi \in \QPSH(M,\Omega)\cap L^\infty(M) $ be non-positive, then it is clear that $ E(\phi) \leq \frac{1}{n+1
}\int_M \phi \MA_\phi $. The other inequality is implied by the following
\begin{equation}\label{eq:trick}
\begin{split}
	\int_M \phi\, \Omega_\phi^{j+1}\wedge \Omega^{n-j-1}-\int_M \phi\,\Omega_\phi^{j}\wedge \Omega^{n-j}& =\int_M \phi\partial \partial_J \phi \wedge \Omega_\phi^{j}\wedge \Omega^{n-j-1}\\
	&=- \int_M \partial\phi \wedge \partial_J \phi \wedge \Omega_\phi^{j}\wedge \Omega^{n-j-1}\leq 0\,,
\end{split}
\end{equation}
where we wrote $\Omega_\phi:=\Omega+ \partial \partial_J \phi$ and the last inequality is again given by Lemma \ref{lem:fakegradient}.
It remains to prove upper semicontinuity and continuity along decreasing sequences. Assume first that $(\varphi_j)_{j\in \N}$ decreases to $\varphi\in \QPSH(M,\Omega)\cap L^\infty(M)$. From \eqref{eq_primitive} a simple calculation gives
$$
0\leq E(\varphi_j)-E(\varphi)=\frac{1}{n+1}\sum_{k=0}^n \int_M (\varphi_j-\varphi)\left(\Omega+\partial \partial_J \varphi_j\right)^k\wedge\left(\Omega+\partial \partial_J \varphi\right)^{n-k}\leq \int_M (\varphi_j-\varphi)\MA_{\varphi}\,,
$$
where the last inequality follows from an integration by parts argument together with Lemma \ref{lem:fakegradient}, similarly to \eqref{eq:trick}.
By the Dominated Convergence Theorem it follows that $E(\varphi_j)\to E(\varphi)$.

Let us now consider $\varphi\in \QPSH(M,\Omega) $ and set $\varphi^k:=\max(\varphi,-k)$ for the canonical approximants. For any $u\in\QPSH(M,\Omega)\cap L^\infty(M)$ such that $u\geq \varphi$ there exists $k\in\N$ such that $u\geq \varphi^k$. In particular $E(u)\geq \lim_k E(\varphi^k)$, from which we deduce that $E(\varphi^k)\to E(\varphi)$ taking the infimum over all the $u$'s and using the monotonicity of the Monge--Ampère energy.

Next take a sequence $(\varphi_j)_{j\in \N}\subset \QPSH(M,\Omega)$ such that $\varphi_j\to \varphi\in\QPSH(M,\Omega)$ in $L^1$ and set $\varphi_j^k:=\max(\varphi_j,-k), \varphi^k:=\max(\varphi, -k)$. The sequence $\psi_j^k:=(\sup_{l\geq j}\varphi_l^k)^*$ belongs to $\QPSH(M,\Omega)$ and decreases to $\varphi^k$ as $j\to +\infty$ for any $k\in\N $ fixed (Proposition \ref{prop_basic}.(3)). As $\varphi^k$ is bounded, we obtain
$$
\lim_j E(\psi^k_j)= E(\varphi^k)
$$
by what was proved above. By monotonicity of the Monge--Ampère energy we deduce
$$
\limsup_j E(\varphi_j)\leq \lim_j E(\psi_j^k)=E(\varphi^k)
$$
for any $k\in\N$. Letting $k\to +\infty$ yields $E(\varphi^k)\to E(\varphi)$, which concludes the proof of the upper semicontinuity. If moreover $\varphi_j\searrow \varphi$, again by monotonicity of the energy functional we also have $ E(\phi)\leq \liminf_{j\to +\infty} E(\phi_j) $, thus continuity of $E$ follows. 
\end{proof}

From the previous proposition it follows 
\[
\mathcal{E}^1(M,\Omega)=\{ \phi \in \mathcal{E}(M,\Omega) \mid E(\phi)>-\infty \}\,.
\]
\begin{prop}\label{prop:cptness}
    For any $C>0$ the set 
    $$
    \mathcal{E}^1_C(M,\Omega):=\{ \phi \in \mathcal{E}^1(M,\Omega) \mid E(\phi)\geq -C\,, \, \phi \leq 0 \}\subseteq \mathcal{E}^1(M,\Omega)
    $$ 
    is convex and compact with respect to the $L^1$-topology.
\end{prop}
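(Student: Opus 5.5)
Convexity and compactness will be handled separately, each following the complex-case argument of \cite{Berman-Boucksom-Guedj-Zeriahi} and using only the results of Sections \ref{Sec 2}--\ref{sec 4}.

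\emph{Compactness.} I will first show that $\mathcal{E}^1_C(M,\Omega)$ is relatively compact. Take $\phi\in\mathcal{E}^1_C(M,\Omega)$. Since $\phi\le 0$, Proposition \ref{prop_Echi_incr} applied to the canonical approximants, together with $E(\max(\phi,-k))\to E(\phi)$, gives a bound for $\sup_M\phi$ from below. Concretely, $E(\phi)\le E(\sup_M\phi)=\sup_M\phi$ because $E$ is monotone and $E(c)=c$ (using \eqref{eq:MAmass}). Hence $-C\le\sup_M\phi\le 0$. Writing $\phi=(\phi-\sup_M\phi)+\sup_M\phi$, Lemma \ref{lem:rel_cptness} shows that every sequence has a subsequence converging in $L^1$ to some $\phi\in\QPSH(M,\Omega)$.

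It remains to check that the limit lies in the set. We have $\phi\le 0$ because $L^1$-limits of $\Omega$-quaternionic psh functions satisfy $\phi=(\limsup\phi_j)^*$ almost everywhere, by Proposition \ref{prop_basic}. Upper semicontinuity of $E$ (Proposition \ref{prop_Echi_incr}) gives $E(\phi)\ge\limsup E(\phi_j)\ge -C$. The delicate point is showing $\phi\in\mathcal{E}(M,\Omega)$, i.e.\ full Monge--Amp\`ere mass, which does not follow from $E(\phi)>-\infty$ by definition alone.

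\emph{Main obstacle.} The key step is to prove that $E(\phi)>-\infty$ implies $\int_M\MA_\phi=1$. I would argue as follows. Let $\phi^k=\max(\phi,-k)$. The total mass of $\MA_{\phi^k}$ is $1$, so
\[
\MA_{\phi^k}(\{\phi\le -k\})=1-\mathbf{1}_{\{\phi>-k\}}\MA_{\phi^k}(M).
\]
It therefore suffices to show that $\MA_{\phi^k}(\{\phi\le-k\})\to0$. On $\{\phi\le -k\}$ we have $\phi^k=-k$, and by Proposition \ref{prop_Echi_incr}
\[
k\,\MA_{\phi^k}(\{\phi\le-k\})\le \int_M|\phi^k|\MA_{\phi^k}\le (n+1)|E(\phi^k)|\le (n+1)C.
\]
Hence this mass is $O(1/k)$, and since $\mathbf{1}_{\{\phi>-k\}}\MA_{\phi^k}$ increases to $\MA_\phi$ (Lemma \ref{Lem:MAdef}), we get $\MA_\phi(M)=1$. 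Thus $\phi\in\mathcal{E}(M,\Omega)$ with $E(\phi)>-\infty$, so $\phi\in\mathcal{E}^1(M,\Omega)$. The same estimate also proves the identity $\mathcal{E}^1=\{E>-\infty\}\cap\mathcal{E}$ from the other direction.

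\emph{Convexity.} Let $\phi,\psi\in\mathcal{E}^1_C$ and $t\in[0,1]$, and set $\phi_t=(1-t)\phi+t\psi$. Then $\phi_t\le0$ and $\phi_t\in\QPSH(M,\Omega)$, since this cone is convex. For bounded functions, concavity of $E$ gives
\[
E(\phi_t)\ge(1-t)E(\phi)+tE(\psi).
\]
To pass to the unbounded case, apply this to the canonical approximants $\phi^k,\psi^k$, whose convex combination is bounded and dominates $\phi_t$. Then let $k\to\infty$. For this I would use that $(1-t)\phi^k+t\psi^k$ decreases to $\phi_t$, so by the definition of $E$ as an infimum and continuity along decreasing sequences (handled via the canonical approximant argument in the proof of Proposition \ref{prop_Echi_incr}), we get $E(\phi_t)=\lim_k E((1-t)\phi^k+t\psi^k)\ge -C$. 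Finally, the mass argument above shows $\phi_t\in\mathcal{E}^1(M,\Omega)$.
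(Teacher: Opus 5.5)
Your proof is correct and follows the same route as the paper: convexity from the concavity of $E$, and compactness from the bound $-C\le \sup_M\phi\le 0$ together with Lemma~\ref{lem:rel_cptness} and the upper semicontinuity of $E$. You also make explicit two things the paper's one-line claim that $\mathcal{E}^1_C(M,\Omega)$ is closed leaves implicit: that $E(\phi)\ge -C$ forces full Monge--Amp\`ere mass (via the same estimate $k\,\MA_{\phi^k}(\{\phi\le -k\})\le (n+1)C$ that appears in \eqref{eqn:contE1}), and that concavity passes to unbounded functions through canonical approximants.
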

\begin{proof}
    The convexity of $\mathcal{E}^1_C(M,\Omega)$ directly follows from the concavity of $E$. The compactness is a consequence of Proposition \ref{prop_Echi_incr} and Lemma \ref{lem:rel_cptness} as $\mathcal{E}^1_C(M,\Omega)$ is closed and contained in the set 
    \[
    \left\{ \phi \in \QPSH(M,\Omega) \mid -C'\leq \sup_M \phi \leq 0 \right\}
    \]
    for a certain $C'>0$ depending on $C>0$.
\end{proof}


\section{Existence of weak solutions}\label{sec 5}
The purpose of this section is to prove the following

\begin{thm}\label{Thm:existence}
Let $(M,I,J,K,\Omega)$ be a compact locally flat HKT manifold such that $K_I(M)$ is holomorphically trivial. Then equation \eqref{eq_QMA} admits a solution $\varphi\in \mathcal{E}^1(M, \Omega)$.
\end{thm}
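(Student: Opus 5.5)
We follow the variational approach of Berman--Boucksom--Guedj--Zeriahi. Set $\mu:={\rm e}^F\Omega^n$, a smooth positive probability measure (with our convention of integrating against $\bar\Theta$). We introduce the functional
\[
\mathcal{F}(\phi):=E(\phi)-\int_M \phi\, d\mu\,,\qquad \phi\in\mathcal{E}^1(M,\Omega)\,.
\]
It is invariant under $\phi\mapsto\phi+c$, since by \eqref{eq:MAmass} $E(\phi+c)=E(\phi)+c$ and $\mu(M)=1$. We normalise by $\sup_M\phi=0$. Since $\mu$ is smooth, Lemma \ref{lem:rel_cptness} gives $\int_M\phi\,d\mu\ge -C$ for such $\phi$. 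Hence $\mathcal{F}(\phi)\le E(\phi)+C$, so $\mathcal{F}$ is proper: any maximising sequence $(\phi_j)$ with $\sup_M\phi_j=0$ satisfies $E(\phi_j)\ge -C'$. It therefore lies in the compact convex set $\mathcal{E}^1_{C'}(M,\Omega)$ of Proposition \ref{prop:cptness}, and $\sup\mathcal{F}>-\infty$ (take $\phi=0$).

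Next we show that $\mathcal{F}$ is upper semicontinuous on $\mathcal{E}^1_{C'}(M,\Omega)$ for the $L^1$-topology. For $E$ this is Proposition \ref{prop_Echi_incr}. The map $\phi\mapsto\int_M\phi\,d\mu$ is continuous along $L^1$-convergent sequences in $\QPSH(M,\Omega)$ with normalised sup, because $\mu$ has smooth density; this is the argument in the proof of Lemma \ref{lem:rel_cptness}. Extracting a convergent subsequence of a maximising sequence then yields a maximiser $\phi\in\mathcal{E}^1(M,\Omega)$.

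The main step is to show that a maximiser solves $\MA_\phi=\mu$. Following BBGZ, we avoid differentiating along non-admissible curves by using the projection $P(\psi):=\big(\sup\{v\in\QPSH(M,\Omega)\mid v\le\psi\}\big)^*$. For $f\in C^0(M)$ consider $g(t):=E(P(\phi+tf))-\int_M(\phi+tf)\,d\mu$. Since $P(\phi+tf)\le\phi+tf$, we have $g(t)\le\mathcal{F}(P(\phi+tf))\le\mathcal{F}(\phi)=g(0)$, so $g$ attains its maximum at $0$. The key ingredient is the differentiability formula
\[
\frac{d}{dt}\Big|_{t=0}E(P(\phi+tf))=\int_M f\,\MA_\phi\,.
\]
From it, $g'(0)=0$ gives $\int f\,\MA_\phi=\int f\,d\mu$ for all continuous $f$, hence $\MA_\phi=\mu$. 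By uniqueness we may subtract the constant $\int_M\phi\,\Omega^n$ to enforce the normalisation in \eqref{eq_QMA}.

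We expect the differentiability formula to be the main obstacle. We would first prove it for bounded $\phi$ (approximating $\phi$ by the canonical approximants if needed, with $\phi\in\mathcal{E}^1$) and continuous $f$. Its proof needs two facts. The first is the orthogonality relation $\int_M(P(\psi)-\psi)\,\MA_{P(\psi)}=0$, i.e.\ $\MA_{P(\psi)}$ is supported on the contact set $\{P(\psi)=\psi\}$. We would establish it by a local balayage argument on small quaternionic balls, solving the Dirichlet problem for the quaternionic Monge--Amp\`ere equation (available in $\H^n$ by Alesker and Wan--Wang) together with Proposition \ref{Prop:max_princ}. The second is the cocycle estimate from \eqref{eq_primitive}:
\[
\int (u-v)\MA_u\le E(u)-E(v)\le\int(u-v)\MA_v\,,
\]
applied to $u=P(\phi+tf)$ and $v=\phi$. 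The orthogonality relation turns the integrands into $tf$ on the relevant supports. One then passes to the limit $t\to0$ using continuity of $\MA$ along the uniformly converging family $P(\phi+tf)\to\phi$, via Theorem \ref{thm:cont} and the fact that $|P(\phi+tf)-\phi|\le |t|\sup|f|$.

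The unbounded case is reduced to the bounded one through the canonical approximants, using Lemma \ref{lem:max_princ_Unbounded} and Proposition \ref{prop:NonPluripolar}. This ensures that $\MA_\phi$ has full mass $1$ and that $\phi\in\mathcal{E}^1(M,\Omega)$.
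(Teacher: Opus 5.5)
You have the same overall architecture as the paper: the Ding functional, properness, compactness of $\mathcal{E}^1_{C}(M,\Omega)$, and the projection/maximiser argument. Your properness step is correct and in fact simpler than Lemma \ref{lem:properness}: for smooth $\mu$, \eqref{eqn:L1_sup_comparison} directly gives $\int_M\phi\,d\mu\ge -C$ when $\sup_M\phi=0$.

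The gap is in the differentiability formula. The lower bound $E(P(\phi+tf))-E(\phi)\ge\int_M(P(\phi+tf)-\phi)\,\MA_{P(\phi+tf)}$ becomes $t\int_M f\,\MA_{P(\phi+tf)}$ only if $\MA_{P(\phi+tf)}$ is carried by the contact set $\{P(\phi+tf)=\phi+tf\}$. The balayage argument you propose proves this only for a continuous obstacle $\psi$. It needs $\{P(\psi)<\psi\}$ to be open, and it needs lower semicontinuity of $\psi$ so that the Dirichlet-problem modification on a small ball stays below $\psi$ and is therefore a competitor.

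With $\psi=\phi+tf$ and $\phi$ merely bounded $\Omega$-quaternionic psh, $\psi$ is only upper semicontinuous. For the actual maximiser $\phi$ may even be unbounded. The non-contact set then need not be open, and the modified function need not lie below $\psi$. Orthogonality for such obstacles would need an extra ingredient, such as quasi-continuity of quaternionic psh functions plus capacity estimates, which your plan does not supply.

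The reduction of the unbounded case via canonical approximants also does not work as stated. A derivative at a single point does not pass to the limit without uniform control. Moreover, continuity of $\MA$ along the uniformly convergent family $P(\phi+tf)\to\phi$ is not Theorem \ref{thm:cont}, which concerns monotone sequences of bounded functions. For unbounded $\phi$ it is not available among the stated results.

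The paper avoids these problems by never using a discontinuous obstacle. For continuous $\psi,u$ it proves the one-sided formula $\frac{d}{dt}E(P(\psi+tu))|_{t=0^+}=\int_M u\,\MA_{P(\psi)}$, using concavity of $E$, Lemma \ref{Lem:support_MA_P(psi)} and Lemma \ref{eqn:Conv_unif}. Since $\psi+su$ is again continuous, the formula holds at every $s$ and integrates to $E(P(\psi+u))-E(P(\psi))=\int_0^1\int_M u\,\MA_{P(\psi+tu)}\,dt$.

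For $\phi\in\mathcal{E}^1(M,\Omega)$ it then takes continuous, not necessarily $\Omega$-quaternionic psh, functions $\phi_j\searrow\phi$, so that $P(\phi_j+tu)\searrow P(\phi+tu)$. It passes to the limit in the integrated identity using continuity of $E$ along decreasing sequences and Lemma \ref{propE}. Finally it takes $u=sv$ with $v\ge 0$, so that $P(\phi+sv)\searrow\phi$ as $s\searrow 0$, divides by $s$, and applies Lemma \ref{propE} again.

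A one-sided derivative suffices for the maximiser argument, since $v$ can be shifted by constants and both measures have mass $1$. You should replace your orthogonality step for the obstacle $\phi+tf$ with this approximation scheme.
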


\smallskip

Following \cite{Berman-Boucksom-Guedj-Zeriahi}, we use a variational approach to prove Theorem \ref{Thm:existence}. We define the \emph{Ding functional} $  \mathcal{F}\colon \mathcal{E}^1(M,\Omega) \to \R $ as
$$
\mathcal{F}(\phi):=E(\phi)-\int_M \phi\, {\rm e}^F\Omega^n\,,
$$
It is immediate to check that $\mathcal{F}(\varphi+c)=\mathcal{F}(\varphi)$ for any $\varphi\in\mathcal{E}^1(M,\Omega)$ and $c\in \R$, and that $\mathcal{F}$ is upper semicontinuous as a consequence of Proposition \ref{prop_Echi_incr}.
Note also that formula \eqref{eq_primitive} implies that for a smooth path $ \phi_t \colon [0,1] \to \mathcal{E}^1(M,\Omega)\cap C^\infty(M) $ we have 
\[
\frac{d}{dt} \mathcal{F}(\phi_t)=
\int_M \dot \phi_t\, \MA_{\phi_t}  - 
\int_M \dot \phi_t\, {\rm e}^F\Omega^n \,.
\]
In particular $\phi$ is a critical point for $ \mathcal{F} $ over $\mathcal{E}^1(M,\Omega)\cap C^\infty(M)$ if and only if 
\begin{equation}\label{eqn:QMA}
\MA_\phi= {\rm e}^F\Omega^n\,.
\end{equation}
The strategy to prove Theorem \ref{Thm:existence} consists in showing that maximisers for $\mathcal{F}$ in $\mathcal{E}^1(M,\Omega)$ are still solutions to \eqref{eqn:QMA}.
\smallskip

Here we introduce the {\em $ \Omega $-quaternionic plurisubharmonic envelope} $ P(\psi)\in\QPSH(M,\Omega) $  of an upper semicontinuous function $\psi$ as 
\[
P(\psi)(x):= \sup \{\phi(x)\mid \phi \in \QPSH(M,\Omega)\,, \phi \leq \psi \}\,.
\]
Observe that $ P(\psi) $ is upper semicontinuous: indeed $ P(\psi)\leq \psi $ implies $ P(\psi)^*\leq \psi^*=\psi $, i.e. $ P(\psi)^* $ is a competitor in the definition of $ P(\psi) $, thus $ P(\psi)=P(\psi)^* $.


\begin{lem}\label{Lem:support_MA_P(psi)}
	If $\psi$ is continuous,  $\MA_{P(\psi)} $ is supported on $ \{P(\psi)=\psi\} $.
\end{lem}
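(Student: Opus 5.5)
The plan is the classical balayage argument of Bedford--Taylor, carried out locally in quaternionic charts, in the form used for the complex envelope in \cite{Berman-Boucksom-Guedj-Zeriahi}.

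Since $\psi$ is continuous it is bounded, so $-\|\psi\|_\infty$ is a competitor in the definition of $P(\psi)$. Hence $P(\psi)$ is bounded, and $\MA_{P(\psi)}$ is the Bedford--Taylor type measure of Section \ref{sec 3}. Set $\phi:=P(\psi)$. Because $\phi\le\psi$ everywhere and $\psi$ is continuous while $\phi$ is upper semicontinuous, the set
\[
U:=\{\phi<\psi\}
\]
is open. It therefore suffices to show that $\MA_\phi(B)=0$ for every small coordinate ball $B\Subset U$ contained in a quaternionic chart where $\Omega=\partial\partial_J u$.

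Fix $x_0\in U$ and a small ball $B$ centred at $x_0$. Using that $\psi$ is continuous and $\phi$ upper semicontinuous, choose $B$ so small that $\sup_{\bar B}\phi<\inf_{\bar B}\psi-\epsilon$ for some $\epsilon>0$. This is possible because $\phi(x_0)<\psi(x_0)$: take $B$ with $\sup_{\bar B}\phi<\phi(x_0)+\delta$ and $\inf_{\bar B}\psi>\psi(x_0)-\delta$ for small $\delta$. Then take the local function $v:=u+\phi\in\QPSH(B')$ on a neighbourhood $B'$ of $\bar B$.

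The key step is to solve the Dirichlet problem for the local quaternionic Monge--Amp\`ere operator: find $w\in\QPSH(B)\cap L^\infty$ with
\[
(\partial\partial_J w)^n=0 \text{ on } B, \qquad w=v \text{ on } \partial B
\]
(in the limiting sense $\limsup w\le v$ and $w\ge v$). Concretely, $w$ is the Perron--Bremermann envelope
\[
w:=\sup\{h\in\QPSH(B) : h^*\le v \text{ on } \partial B\}^*,
\]
obtained by approximating $v$ from above by continuous boundary data and passing to decreasing limits. Its maximality, i.e. the vanishing of $(\partial\partial_J w)^n$, is the local quaternionic result of Wan--Wang \cite{Wan-Wang}, or follows from their solution of the Dirichlet problem on balls together with Theorem \ref{thm:cont}. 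We have $w\ge v$ on $B$. Shrinking $B$ and using continuity of $u$, we also get $w\le \sup_{\partial B}v\le u+\psi$ on $B$ up to an error that the margin $\epsilon$ absorbs.

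Define $\tilde\phi:=w-u$ on $B$ and $\tilde\phi:=\phi$ outside $B$. The gluing lemma (maximum of quaternionic psh functions, as in the proof of Proposition \ref{prop:1}) shows $\tilde\phi\in\QPSH(M,\Omega)$, and the previous bound gives $\tilde\phi\le\psi$. By maximality of the envelope, $\tilde\phi\le\phi$; combined with $w\ge v$ this yields $\tilde\phi=\phi$. Consequently
\[
(\partial\partial_J v)^n=(\partial\partial_J w)^n=0 \text{ on } B,
\]
that is, $\MA_\phi(B)=0$.

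The main obstacle is ensuring $w-u\le\psi$ on $B$. The intended remedy is to replace $u$ by $u-u(x_0)-\ell$, with $\ell$ the real part of a quaternionic affine-linear part, so that $u$ has oscillation $O(r^2)$ on the ball of radius $r$; the margin $\epsilon$ then dominates.

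An alternative plan, if the local Dirichlet problem is unavailable in the needed generality, is to approximate: take continuous $\psi_j$ and smooth strictly $\Omega$-psh data, use that $P$ commutes with decreasing limits, and invoke Theorem \ref{thm:cont} to pass the support property to the limit via the Portmanteau lemma, using that $\{\phi<\psi-\epsilon\}$ is open.
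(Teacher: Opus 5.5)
Your proposal is correct and follows the paper's route. The paper's proof is the same three steps in brief: openness of $\{P(\psi)<\psi\}$, solvability of the local Dirichlet problem on small balls (the paper cites \cite[Lem. 3.2]{Wan-Kang}), and a balayage argument on those balls closed off by maximality of the envelope. You give more detail than the paper does.

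The ``main obstacle'' you flag is not really one. Shrinking $B$ keeps the margin $\epsilon$ (the sup of $\phi$ only decreases and the inf of $\psi$ only increases), while the oscillation of the continuous potential $u$ tends to zero. Your first estimate therefore already closes that step, and the affine correction to $u$ is unnecessary, though harmless.
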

\begin{proof}
	Since $ \psi $ is continuous and $ P(\psi) $ is upper semicontinuous the set $ \{ P(\psi)<\psi \} $ is open. The lemma is then a consequence of the fact that we can solve the Dirichlet problem in sufficiently small balls (see \cite[Lem. 3.2]{Wan-Kang}) and a standard balayage procedure performed on small balls inside $ \{ P(\psi)<\psi \} $ (cf. \cite[Prop. 4.1, Thm. 5.2.(2)]{Guedj-Zeriahi 2005}).
\end{proof}

One key result to prove is the following Projection Theorem. The idea in the complex setting is due to Berman and Boucksom \cite{Berman-Boucksom}, the proof was later simplified by Lu and Nguyen \cite{Lu-Nguyen}, which we adapt closely to our case.

\begin{prop}\label{Prop:projection_thm}
	For every $ \phi \in \mathcal{E}^1(M,\Omega) $ and $ v\in C^0(M) $
	\begin{equation}\label{eq:Proj}
	\frac{d}{dt} E (P(\phi+tv))\big\vert_{t=0}=\int_M v\, \MA_\phi\,.
	\end{equation}
\end{prop}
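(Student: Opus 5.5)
The plan is to follow the Lu--Nguyen argument: squeeze the difference quotient of $t\mapsto E(P(\phi+tv))$ between two one-sided estimates coming from concavity of $E$. Set $\phi_t:=P(\phi+tv)$. Since $v$ is bounded, $\phi-t\|v\|_\infty\le\phi_t\le\phi+t\|v\|_\infty$ for $t\geq0$, and similarly for $t\le0$. By monotonicity of $E$ (Proposition \ref{prop_Echi_incr}) this gives $\phi_t\in\mathcal{E}^1(M,\Omega)$ and $\phi_0=\phi$. The key first-order inequality, for $u,w\in\mathcal{E}^1(M,\Omega)$, is
\[
\int_M (w-u)\,\MA_w\;\le\; E(w)-E(u)\;\le\;\int_M (w-u)\,\MA_u .
\]
For bounded $u,w$ it follows from \eqref{eq_primitive} and concavity along the segment $(1-s)u+sw$. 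For general $u,w$ I would pass to the canonical approximants $\max(\cdot,-k)$, using continuity of $E$ along decreasing sequences (Proposition \ref{prop_Echi_incr}) and convergence of the measures $\MA_{\max(u,-k)}$ towards $\MA_u$ in the sense given by Lemma \ref{Lem:MAdef} and Lemma \ref{lem:max_princ_Unbounded}.

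\emph{Upper bound.} Apply the right inequality with $u=\phi$, $w=\phi_t$. Since $\phi_t\le\phi+tv$,
\[
E(\phi_t)-E(\phi)\le \int_M(\phi_t-\phi)\,\MA_\phi\le t\int_M v\,\MA_\phi .
\]
Dividing by $t>0$ bounds the right upper derivative by $\int_M v\,\MA_\phi$; dividing by $t<0$ bounds the left lower derivative from below by the same quantity.

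\emph{Lower bound.} Apply the left inequality with $u=\phi$, $w=\phi_t$:
\[
E(\phi_t)-E(\phi)\ge \int_M(\phi_t-\phi)\,\MA_{\phi_t}.
\]
Here I would use that $\MA_{\phi_t}$ is supported on $\{\phi_t=\phi+tv\}$, which is the analogue of Lemma \ref{Lem:support_MA_P(psi)}. On that set $\phi_t-\phi=tv$, so the bound becomes $t\int_M v\,\MA_{\phi_t}$.

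\emph{Main obstacle: the support property.} Lemma \ref{Lem:support_MA_P(psi)} requires $\psi$ continuous, whereas $\phi+tv$ is only usc. I would first treat continuous $\phi$ and then approximate. Choose continuous $\phi^j\searrow\phi$ in $\QPSH(M,\Omega)$; if global regularisation is unavailable, take continuous functions $h_j\searrow\phi$, which suffice for envelopes. Then $P(\phi^j+tv)\searrow P(\phi+tv)$. By Theorem \ref{thm:cont} and a stability argument for the non-pluripolar product, the support statement
\[
\int_M (\phi_t-\phi-tv)\,\MA_{\phi_t}=0
\]
should pass to the limit. Bounded truncation via Lemma \ref{lem:max_princ_Unbounded} should handle the unbounded part.

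\emph{Concluding the squeeze.} It then suffices to prove $\int_M v\,\MA_{\phi_t}\to\int_M v\,\MA_\phi$ as $t\to0$. Since $\phi_t\to\phi$ monotonically in $L^1$ with uniformly controlled energy, weak convergence of the Monge--Amp\`ere measures within $\mathcal{E}^1$ gives this for continuous $v$. For bounded functions the convergence follows from Theorem \ref{thm:cont}. For general $\phi$ I expect the main technical work to be the $\mathcal{E}^1$ stability: truncate with $\max(\cdot,-k)$ and control the mass of $\{\phi\le-k\}$ uniformly in $t$, using the energy bounds and total mass $1$ (Proposition \ref{Prop:cps_principle} together with the condition $\int_M\MA_\phi=1$). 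Combining the two bounds gives equality of the one-sided derivatives, which is \eqref{eq:Proj}.
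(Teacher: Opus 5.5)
Your concavity squeeze is the argument the paper uses, but the paper runs it only for a \emph{continuous} obstacle $\psi$, where Lemma \ref{Lem:support_MA_P(psi)} applies directly. The gap is the step you flag as the main obstacle. For $\phi\in\mathcal{E}^1(M,\Omega)$ your lower bound needs $\MA_{P(\phi+tv)}$ to be carried by $\{P(\phi+tv)=\phi+tv\}$, and the limiting argument you sketch does not deliver this.

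Take continuous $h_j\searrow h:=\phi+tv$ and set $\psi_j:=P(h_j)\searrow\psi:=P(h)$. You know $\int_M(h_j-\psi_j)\,\MA_{\psi_j}=0$. To conclude $\int_M(h-\psi)\,\MA_\psi\le 0$ you need $\int_M h\,\MA_\psi\le\liminf_j\int_M h\,\MA_{\psi_j}$. But $h$ is only upper semicontinuous, and weak convergence gives the reverse inequality.

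For bounded $\phi$ this can be repaired by applying Theorem \ref{thm:cont} with the constant sequence $\phi^0_j:=\phi$, which yields $\int_M\phi\,\MA_{\psi_j}\to\int_M\phi\,\MA_\psi$. You should say this explicitly. For unbounded $\phi$, however, the repair fails on three counts:
\begin{itemize}
\item Theorem \ref{thm:cont} is not available.
\item You would need $\int_M\phi\,\MA_{\psi_j}\to\int_M\phi\,\MA_\psi$ for an unbounded $\phi$, which is a uniform-integrability (mixed-energy) estimate the paper does not provide.
\item Lemma \ref{lem:max_princ_Unbounded} does not help: $\max(P(\phi+tv),-k)$ is in general not the envelope of a truncated obstacle such as $\max(\phi,-k)+tv$, so the bounded case cannot be transported.
\end{itemize}
As written, the contact-set property, and with it your lower bound, is unproved in exactly the case the proposition is about.

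The missing idea is to integrate in $t$ before approximating. The paper proves $\frac{d}{dt}E(P(\psi+tu))\big\vert_{t=0^+}=\int_M u\,\MA_{P(\psi)}$ only for continuous $\psi$, using your squeeze together with Lemma \ref{eqn:Conv_unif}. It then integrates to obtain $E(P(\psi+u))-E(P(\psi))=\int_0^1\int_M u\,\MA_{P(\psi+tu)}\,dt$. Finally it passes to the limit along continuous $\phi_j\searrow\phi$. In that limit only two kinds of terms occur: $E$, which is continuous along decreasing sequences, and integrals of the continuous $u$ against $\MA_{P(\phi_j+tu)}$, handled by Lemma \ref{propE}. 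No contact-set statement for non-continuous obstacles is ever needed.

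A smaller point concerns your final convergence step. The map $t\mapsto P(\phi+tv)$ is not monotone when $v$ changes sign. You therefore need to reduce to $v\ge 0$ via $P(f+C)=P(f)+C$ and $\MA_\phi(M)=1$. The left derivative is best obtained by applying the result to a nonnegative shift of $-v$, because Lemma \ref{propE} covers only decreasing sequences.
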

\begin{proof}
First, we observe that it is enough to prove that
	\begin{equation}
    \label{eq:ProjNew}
	E(P(\psi+u))-E(P(\psi))=\int_0^1\left(\int_M u\, \MA_{P(\psi+tu)}\right) dt
	\end{equation}
	for every pair of continuous functions $ \psi,u $ on $ M $. Indeed, take a sequence $ (\phi_j)_{j\in \N} $ of continuous functions on $ M $ that decrease to $ \phi $.  Such a sequence exists because $ \phi $ is upper semicontinuous (observe that the $ \phi_j $'s need not be in $ \QPSH(M,\Omega) $).
	By continuity along decreasing sequences of the Monge--Ampère energy (Proposition \ref{prop_Echi_incr}) and the Monge--Amp\`{e}re operator (see Lemma \ref{propE} below), we get
	\[
	E(P(\phi+u))-E(\phi)=\lim_{j\to +\infty}\bigl(E(P(\phi_j+u))-E(P(\phi_j))\bigr)
	\]
    as $P(\phi)=\phi, P(\phi_j)\searrow P(\phi), P(\phi_j+u)\searrow P(\phi+u)$, and
	\[
	\int_0^1\left(\int_M u \,\MA_{P(\phi+tu)}\right)dt=\lim_{j\to +\infty}\int_0^1\left(\int_M u\, \MA_{P(\phi_j+tu)}\right)dt\,.
	\]
    Therefore from \eqref{eq:ProjNew} we obtain
    \begin{equation}\label{eq:DagliUnNome}
    E(P(\varphi+u))-E(\varphi)=\int_0^1\left(\int_M u\,\MA_{P(\phi+tu)}\right)dt
    \end{equation}
    for any continuous function $u$. In particular, assuming first $v\geq 0$, \eqref{eq:Proj} follows from \eqref{eq:DagliUnNome} and Lemma \ref{propE} by replacing $u$ with $sv$, dividing by $s$, and taking the limit as $s\to 0$. For general $v\in C^0(M)$ we can replace $v$ by $v-\inf_M v$, observing that
    $$
    \frac{d}{dt}E(P(\varphi+t(v+C)))_{|t=0}= \frac{d}{dt}E(P(\varphi+tv))_{|t=0}+C
    $$
    for any $C\in \R$ (since $P(f+C)=P(f)+C$ by definition) and that $\MA_\varphi(M)=1$.
	
	It remains to prove \eqref{eq:ProjNew} for $ \psi \in C^0(M) $, which in turn we claim will follow from
    \begin{equation}\label{eqn:New}
        \frac{d}{dt} E( P(\psi+tu))_{|t=0^+}=\int_M u \MA_{P(\psi)}.
    \end{equation}
    Indeed, replacing $\psi$ by $\psi+su$ and the direction $u$ by $-u$, from \eqref{eqn:New} we obtain
    $$
    \frac{d}{dt} E(P(\psi+tu))_{|t=s}=\int_M u \MA_{P(\psi+su)}
    $$
    for any $s\in [0,1]$. Integrating over the interval $[0,1]$ we deduce $\eqref{eq:ProjNew}$.
    Let us now prove \eqref{eqn:New}. From the concavity of the energy we deduce
	\begin{align*}
	E( P(\psi+tu))&\leq E(P(\psi))+E'(P(\psi))(P(\psi+tu)-P(\psi))\,,\\
	E (P(\psi))&\leq E(P(\psi+tu))+E'(P(\psi+tu))(P(\psi)-P(\psi+tu))\,,
	\end{align*}
	which, together with \eqref{eq_primitive}, give
	\[
		\int_M \frac{P(\psi+tu)-P(\psi)}{t}\MA_{P(\psi+tu)}\leq \frac{E( P(\psi+tu))-E(P(\psi))}{t}\leq \int_M \frac{P(\psi+tu)-P(\psi)}{t}\MA_{P(\psi)}\,.
	\]
	Using Lemma \ref{Lem:support_MA_P(psi)} and the inequality $ P(\psi+tu)\leq \psi+tu $ we then obtain
	\begin{equation}\label{eq_proj_thm1}
		\int_M u\,\MA_{P(\psi+tu)}\leq \frac{E (P(\psi+tu))-E(P(\psi))}{t}\leq \int_M u\,\MA_{P(\psi)}\,.
	\end{equation}
	Observe that the projection is uniformly Lipschitz as, by definition $P(\psi+u)\leq P(\psi+\sup_M \lvert u\rvert)=P(\psi)+\sup_M \lvert u\rvert$, and similarly $P(\psi +u)\geq P(\psi)-\sup_M \lvert u\rvert$ for any $u\in C^0(M)$. Thus we see that
	\[
	\sup_M |P(\psi+tu)-P(\psi)|\leq t\sup_M |u|\,.
	\]
	It follows that $ P(\psi+tu) \to P(\psi) $ uniformly as $ t\to 0^+ $.
	By continuity of the Monge--Amp\`{e}re operator of Lemma \ref{eqn:Conv_unif} below, taking the limit in \eqref{eq_proj_thm1} as $ t\to 0^+ $ yields \eqref{eqn:New}, as required.
\end{proof}

\begin{lem}\label{eqn:Conv_unif}
    Let $\phi\in \QPSH(M,\Omega)\cap C^0(M)$ and let $(\phi_j)_{j\in \N}\subset \QPSH(M,\Omega)\cap C^0(M)$ be such that $\phi_j\to \phi$ uniformly. Then $\MA_{\phi_j}\to \MA_{\phi}$ weakly, as $j\to +\infty$.
\end{lem}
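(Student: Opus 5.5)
The plan is to reduce, via a monotone squeeze, to the continuity of the Monge--Amp\`ere operator along monotone sequences (Theorem \ref{thm:cont}). Set $\epsilon_j:=\sup_M|\phi_j-\phi|\to 0$; passing to a subsequence, assume $\sum_j \epsilon_j<\infty$. Since the weak limit of a subsequence is forced to be $\MA_\phi$, and every subsequence has such a further subsequence, this suffices for the whole sequence, because the measures all have mass $1$ by \eqref{eq:MAmass}.

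I would squeeze $\phi_j$ between two monotone sequences. Define $\delta_j:=\sum_{k\geq j}\epsilon_k\searrow 0$, and set
\[
u_j:=\phi-\delta_j\,,\qquad w_j:=\phi+\delta_j\,.
\]
Then $u_j\leq \phi_j\leq w_j$. Moreover $u_j$ increases and $w_j$ decreases to $\phi$, and both are in $\QPSH(M,\Omega)\cap C^0(M)$. However, these have Monge--Amp\`ere measure equal to $\MA_\phi$ trivially, so they carry no information about $\phi_j$. The squeeze must instead be realised inside the operator, using mixed terms.

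The key step is the following. For a test function $\chi\in C^\infty(M)$, write the difference telescopically:
\[
\int_M\chi\,(\MA_{\phi_j}-\MA_\phi)=\sum_{k=0}^{n-1}\int_M\chi\,\partial\partial_J(\phi_j-\phi)\wedge\Omega_{\phi_j}^{k}\wedge\Omega_\phi^{n-1-k}\,.
\]
The products are defined since all functions are bounded. Integrating by parts, which is justified because $\Theta$ is holomorphic, so $\int_M\partial T=0$, and $\partial\partial_J$ is self-adjoint on q-closed currents, turns each term into
\[
\int_M(\phi_j-\phi)\,\partial\partial_J\chi\wedge\Omega_{\phi_j}^{k}\wedge\Omega_\phi^{n-1-k}\,.
\]
The smooth form $\partial\partial_J\chi$ is q-real. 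It can therefore be written as $A\Omega-(A\Omega-\partial\partial_J\chi)$ with both pieces q-positive for $A$ large, so it is a difference of two q-positive forms each bounded by $C_\chi\Omega$.

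Each term is thus bounded in absolute value by
\[
2C_\chi\,\epsilon_j\int_M\Omega\wedge\Omega_{\phi_j}^k\wedge\Omega_\phi^{n-1-k}=2C_\chi\,\epsilon_j\,,
\]
where the equality uses the mass identity \eqref{eq:MAmass} for mixed measures. Letting $j\to\infty$ gives weak convergence directly, so the subsequence reduction is in fact unnecessary.

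The main obstacle is justifying the integration by parts for bounded non-smooth potentials. This means checking the identity $\int_M f\,\partial\partial_J g\wedge T=\int_M g\,\partial\partial_J f\wedge T$ for bounded $\Omega$-quaternionic psh $f,g$ (or differences thereof, or smooth functions) and q-closed q-positive $T$ built from such potentials. I would prove it first for smooth approximants, obtained from Proposition \ref{Prop:approx} and local regularisation via Proposition \ref{prop_convolution}, decreasing to $\phi$ and $\phi_j$. I would then pass to the limit using Theorem \ref{thm:cont}, exactly as in the proof of Lemma \ref{lem:fakegradient}. The bound by $\epsilon_j$ holds at the smooth level before the limit, so it needs only positivity and mass.
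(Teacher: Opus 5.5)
Your key step is the paper's proof: telescope $\MA_{\phi_j}-\MA_\phi$ into mixed terms, move $\partial\partial_J$ onto the test function, bound $\partial\partial_J\chi$ between $\pm C\Omega$, and conclude by positivity and the unit mass of the mixed Monge--Amp\`ere measures. The paper differs only in localising with a partition of unity and integrating by parts in charts rather than globally on $M$, and in spelling out the reduction from continuous to smooth test functions via $\MA_{\phi_j}(M)=\MA_\phi(M)=1$.

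The squeeze, the subsequence and the approximation scheme in your last paragraph are unnecessary. Since $\chi$ is smooth, $\int_M\chi\,\partial\partial_J(\phi_j-\phi)\wedge T=\int_M(\phi_j-\phi)\,\partial\partial_J\chi\wedge T$ follows directly by duality from the definition $\partial\partial_J g\wedge T:=\partial\partial_J(gT)$. Also, the $\epsilon_j$-bound must be applied to the q-positive currents $\Omega_{\phi_j}^k\wedge\Omega_\phi^{n-1-k}$ themselves. It cannot be applied to the smooth approximants of Proposition \ref{Prop:approx}, which are only $C\Omega$-psh.
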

\begin{proof}
Approximating uniformly $f\in C^0(M)$ by $f_k\in C^\infty(M)$ we observe that
\begin{align*}
    \left\lvert \int_M f (\MA_{\varphi_j} - \MA_\varphi)\right\rvert&\leq \left\lvert \int_M (f-f_k)\MA_{\varphi_j} \right\rvert +\left\lvert \int_M f_k (\MA_{\varphi_j} - \MA_\varphi)\right\rvert + \left\lvert \int_M (f-f_k)\MA_\varphi \right\rvert \\
    &\leq 2\left\lVert f-f_k \right\rVert_\infty + \left\lvert \int_M f_k (\MA_{\varphi_j} - \MA_\varphi)\right\rvert
\end{align*}
as $\MA_{\varphi_j}(M)=\MA_{\varphi}(M)=1$. Thus it suffices to show that 
$$
\int_M f \MA_{\varphi_j}\to \int_M f\MA_\varphi\,
$$
for every $f\in C^{\infty}(M)$. 

    By using a partition of unity we can reduce to the case when $f$ has compact support in the domain $A$ of a coordinate chart. Then, letting $u$ be a local potential for $\Omega$ in $A$, we have $\MA_{\phi_j}=(\partial \partial_J \psi_j)^n$, and $\MA_\phi=(\partial \partial_J \psi)^n$, where we set $\psi_j:=u+\phi_j, \psi:= u+\phi \in \QPSH(A)$. Clearly $\lVert\psi_j-\psi \rVert_{\infty, K}\leq \lVert \phi_j-\phi \rVert_\infty$ for any compact set $K\subset A$, for instance $K=\mathrm{Supp}(f)$.
    Let $k=1,\dots,n$, and fix a subdomain $A'\Subset A$ such that $\mathrm{Supp}(f)\Subset A'$. Then, we have
    $$
    \begin{aligned}
        &\left\lvert\int_A f(\partial \partial_J \psi_j)^k\wedge (\partial \partial_J\psi)^{n-k}-\int_A f(\partial \partial_J \psi_j)^{k-1}\wedge (\partial \partial_J \psi)^{n-k+1}  \right\rvert\\
        &\qquad = \left\lvert \int_A (\psi_j-\psi)(\partial\partial_J \psi_j)^{k-1}\wedge (\partial \partial_J \psi)^{n-k} \wedge \partial \partial_J f \right\rvert\\
        &\qquad \leq C \lVert\phi_j-\phi \rVert_{\infty} \int_{A'} (\partial \partial_J\psi_j)^{k-1}\wedge (\partial \partial_J \psi)^{n-k} \wedge \partial\partial_J u\\
        &\qquad \leq C \lVert\phi_j-\phi \rVert_{\infty} \int_M(\Omega+\partial\partial_J \phi_j)^{k-1}\wedge (\Omega+\partial \partial_J \phi)^{n-k}\wedge \Omega =C\lVert \phi_j-\phi\rVert_{\infty}\,,
    \end{aligned}
$$
where $-C\partial\partial_J u\leq \partial\partial_J f\leq C \partial \partial_J u$ and in the last equality we used \eqref{eq:MAmass}.
We deduce that
$$
\left\lvert \int_A f (\partial \partial_J\psi_j)^n-\int_A f(\partial \partial_J\psi)^n \right\rvert\leq n C \lVert \phi_j-\phi\rVert_\infty\,,
$$
which concludes the proof.
\end{proof}

\begin{lem}\label{propE}
Let $(\phi_j)_{j\in \N} $ be a decreasing sequence in $ \mathcal{E}^1(M,\Omega)$ converging to $\phi \in \mathcal{E}^1(M,\Omega)$. Then $ \MA_{\phi_j}\to \MA_\phi $ weakly, as $ j\to+ \infty $.
\end{lem}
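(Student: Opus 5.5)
We reduce to the bounded case via the canonical approximants and then control the error with the energy, following the complex argument of Guedj--Zeriahi / BBGZ.

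\emph{Step 1: bounded case and truncations.} Fix $k\in\N$ and set $\phi_j^k:=\max(\phi_j,-k)$, $\phi^k:=\max(\phi,-k)$. For fixed $k$ these are bounded elements of $\QPSH(M,\Omega)$ with $\phi_j^k\searrow\phi^k$ as $j\to+\infty$. Theorem \ref{thm:cont} then gives $\MA_{\phi_j^k}\to\MA_{\phi^k}$ weakly.

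\emph{Step 2: error between $\MA_\psi$ and its truncation.} We estimate, uniformly in $\psi\in\{\phi_j\}_j\cup\{\phi\}$, the total variation of $\MA_\psi-\MA_{\max(\psi,-k)}$. By Lemma \ref{lem:max_princ_Unbounded} (or the definition together with Lemma \ref{Lem:MAdef}), the two measures coincide on $\{\psi>-k\}$. Since $\psi\in\mathcal{E}$, both have total mass $1$, so
\[
\|\MA_\psi-\MA_{\max(\psi,-k)}\|\leq 2\,\MA_\psi(\{\psi\leq -k\}).
\]
This bound uses only that $\MA_\psi$ and $\MA_{\max(\psi,-k)}$ agree on $\{\psi>-k\}$ and both have mass one.

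\emph{Step 3: uniform smallness of the tail.} This is the main obstacle: we need $\sup_j \MA_{\phi_j}(\{\phi_j\leq -k\})\to 0$ as $k\to\infty$. Since $\phi_j\geq\phi$ and $\phi_j\leq\phi_1\leq \sup_M\phi_1$, we may translate and assume all $\phi_j\leq 0$. It then suffices to bound $\int_M|\phi_j|\,\MA_{\phi_j}$ uniformly, because Chebyshev gives
\[
\MA_{\phi_j}(\{\phi_j\le -k\})\le \frac{1}{k}\int_M|\phi_j|\,\MA_{\phi_j}.
\]
Monotonicity of $E$ gives $E(\phi_j)\geq E(\phi)>-\infty$. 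Extending the inequality $\int\phi\,\MA_\phi\geq (n+1)E(\phi)$ of Proposition \ref{prop_Echi_incr} (up to a dimensional constant) to $\mathcal{E}^1$ via canonical approximants yields $\int|\phi_j|\MA_{\phi_j}\leq C|E(\phi)|$. The extension uses the monotone increase of $\mathbf 1_{\{\phi_j>-k\}}\MA_{\phi_j^k}$ from Lemma \ref{Lem:MAdef} and $E(\phi_j^k)\searrow E(\phi_j)$.

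If this passage to unbounded functions is delicate, we would instead use the sharper fundamental inequality $\MA_\psi(\{\psi\le -2k\})\le k^{-1}\int_{\{\psi\le -k\}}|\max(\psi,-k)|\,\MA_{\max(\psi,-k)}$, obtained from the comparison principle Proposition \ref{Prop:cps_principle} applied to $\psi^{2k}$ and $\psi^k$. The right-hand side is controlled by $|E(\psi^k)|\leq |E(\phi)|$.

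\emph{Step 4: conclusion.} For $f\in C^0(M)$ we write
\[
\left|\int f(\MA_{\phi_j}-\MA_\phi)\right|\le \left|\int f(\MA_{\phi_j^k}-\MA_{\phi^k})\right|+ 2\|f\|_\infty\bigl(\MA_{\phi_j}(\{\phi_j\le-k\})+\MA_\phi(\{\phi\le -k\})\bigr).
\]
Choose $k$ large using Step 3, so that the last term is small uniformly in $j$. Then let $j\to\infty$ using Step 1. This gives $\MA_{\phi_j}\to\MA_\phi$ weakly.
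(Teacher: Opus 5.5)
Your proposal is correct and follows essentially the same route as the paper: truncate by the canonical approximants, use Theorem~\ref{thm:cont} for fixed $k$, bound the error by $2\,\MA_{\psi}(\{\psi\le -k\})$ via the maximum principle and unit mass, and make this tail uniformly small by Chebyshev together with $(n+1)E(\phi)\le (n+1)E(\phi_j)\le \int_M \phi_j\,\MA_{\phi_j}$. The paper applies this last inequality to the unbounded $\phi_j$ without comment, whereas you justify it via the increasing measures $\mathbf{1}_{\{\phi_j>-k\}}\MA_{\phi_j^k}$; your fallback, $k\,\MA_{\phi_j^k}(\{\phi_j\le -k\})\le \int_M|\phi_j^k|\,\MA_{\phi_j^k}\le (n+1)|E(\phi)|$, avoids that extension altogether.
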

\begin{proof}
Assume without loss of generality that $\phi\leq 0$ and $\phi_j\leq 0$ for all $j$. Consider the canonical approximants $\phi^k:=\max(\phi,-k)$, $\phi_j^k:=\max(\phi_j,-k)$. For any fixed $k$ we clearly have $\phi_j^k \searrow \phi^k$ and from continuity of the Monge--Ampère operator along decreasing sequences in $\QPSH(M,\Omega)\cap L^\infty(M)$ (Theorem \ref{thm:cont}) we deduce that $\MA_{\phi_j^k} \to \MA_{\phi^k}$.

Let $C>0$ be such that $E(\varphi)\geq -C$. We claim that
\begin{equation}\label{eqn:contE1}
\int_{\{\varphi_j\leq -k\}}\MA_{\varphi_j}\leq \frac{n+1}{k}C
\end{equation}
for any $j,k\in\N$. Indeed, by Proposition \ref{prop_Echi_incr} we have
$$
-C\leq E(\varphi)\leq E(\varphi_j)\leq \frac{1}{n+1}\int_M \varphi_j \MA_{\varphi_j}\leq \frac{1}{n+1}\int_{\{\varphi_j\leq -k\}}\varphi_j\MA_{\varphi_j}\leq \frac{-k}{n+1}\int_{\{\varphi_j\leq -k\}}\MA_{\varphi_j}
$$
and the claim \eqref{eqn:contE1} follows. Then we have
\begin{multline*}
    \left\lvert\int_M \chi \left(\MA_{\varphi_j}-\MA_{\varphi}\right) \right\rvert\\
    \leq \underbrace{\left\lvert  \int_M\chi \left(\MA_{\varphi_j}-\MA_{\varphi_j^k}\right) \right\rvert}_{\mathrm{(I)}_{j,k}} + \left \lvert  \int_M\chi \left(\MA_{\varphi_j^k}-\MA_{\varphi^k}\right) \right \rvert +\underbrace{\left \lvert  \int_M\chi \left(\MA_{\varphi^k}-\MA_{\varphi}\right) \right \rvert}_{\mathrm{(II)}_{k}}\,.
\end{multline*}
The second term on the right-hand side tends to $0$ as $j\to +\infty$ thanks to the first part of the proof. Thus, we need to show that $\limsup_k\limsup_j\mathrm{(I)}_{j,k}+\mathrm{(II)}_{k}=0$.

For any $j,k\in\N$ we have
\[
\int_{\{\varphi_j\leq -k\}}\MA_{\varphi_j}=\int_M \MA_{\varphi_j}-\int_{\{\varphi_j>-k\}}\MA_{\varphi_j}= \int_M \MA_{\varphi_j^k}-\int_{\{\varphi_j>-k\}}\MA_{\varphi_j^k}=\int_{\{\varphi_j\leq -k\}}\MA_{\varphi_j^k}\,,
\]
where the second equality follows since $\varphi_j, \varphi_j^k\in\mathcal{E}(M,\Omega)$ and the maximum principle of Lemma \ref{lem:max_princ_Unbounded}. Combining this with \eqref{eqn:contE1} we obtain
\begin{align*}
    \mathrm{(I)}_{j,k}\leq \lVert \chi\rVert_{\infty}\int_{\{\varphi_j\leq -k\}}\left(\MA_{\varphi_j}+\MA_{\varphi_j^k}\right)=2\lVert\chi \rVert_{\infty}\int_{\{\varphi_j\leq -k\}}\MA_{\varphi_j}\leq \frac{2(n+1)C\lVert \chi\rVert_\infty }{k}\,,
\end{align*}
and similarly
$$
\mathrm{(II)}_k\leq 2\lVert \chi \rVert_{\infty}\int_{\{\varphi\leq -k\}}\MA_{\varphi}\,.
$$
We deduce that $\limsup_k\limsup_j\mathrm{(I)}_{j,k}+\mathrm{(II)}_k\leq 0 $ as $\MA_\varphi$ does not put mass on the Q-polar set $\{\varphi=-\infty\}$ (Proposition \ref{prop:NonPluripolar}).
\end{proof}

We can now observe that maximisers of $\mathcal{F}$ solve \eqref{eqn:QMA}.
\begin{prop}\label{Prop:final}
    Let $\varphi\in\mathcal{E}^1(M,\Omega)$ be such that $\mathcal{F}(\varphi)=\sup_{\mathcal{E}^1(M,\Omega)}\mathcal{F}$. Then $\varphi$ solves \eqref{eqn:QMA}.
\end{prop}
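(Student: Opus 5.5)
We follow the argument of \cite{Berman-Boucksom-Guedj-Zeriahi}: we perturb the maximiser $\varphi$ along projected directions. Fix $v\in C^0(M)$ and, for $t\in\R$, consider $\phi_t:=P(\varphi+tv)$. First we check that $\phi_t\in\mathcal{E}^1(M,\Omega)$. Since $P$ is $1$-Lipschitz with respect to the sup norm (as observed in the proof of Proposition \ref{Prop:projection_thm}), we have
$$
\varphi-|t|\sup_M|v|\leq\phi_t\leq\varphi+|t|\sup_M|v|.
$$
By the monotonicity of $E$ (Proposition \ref{prop_Echi_incr}) this gives $E(\phi_t)\geq E(\varphi)-|t|\sup_M|v|>-\infty$. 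Membership in $\mathcal{E}(M,\Omega)$ also follows, because $\phi_t$ is squeezed between translates of $\varphi$; this is the standard fact that $E>-\infty$ forces full mass. If full mass is not immediate from the definitions here, we argue through the canonical approximants and the bound \eqref{eqn:contE1}.

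Next we define
$$
g(t):=E(\phi_t)-\int_M(\varphi+tv)\,{\rm e}^F\Omega^n .
$$
Since $\phi_t\leq\varphi+tv$ and ${\rm e}^F\Omega^n$ is a positive measure, we get $g(t)\leq \mathcal{F}(\phi_t)\leq \mathcal{F}(\varphi)=g(0)$; here we use $P(\varphi)=\varphi$. So $g$ attains its maximum at $t=0$.

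By Proposition \ref{Prop:projection_thm}, $g$ is differentiable at $0$ with
$$
g'(0)=\int_M v\,\MA_\varphi-\int_M v\,{\rm e}^F\Omega^n .
$$
An interior maximum of a function differentiable at that point forces $g'(0)=0$. Hence $\int_M v\,\MA_\varphi=\int_M v\,{\rm e}^F\Omega^n$ for all $v\in C^0(M)$. Both sides are Radon measures on the compact manifold $M$: $\MA_\varphi$ is a positive Borel measure of finite mass by Lemma \ref{Lem:MAdef}, and hence Radon on a compact metrizable space. By the Riesz representation theorem we conclude $\MA_\varphi={\rm e}^F\Omega^n$, which is \eqref{eqn:QMA}.

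The main subtle point is the first step, namely that $\phi_t$ is an admissible competitor in $\mathcal{E}^1(M,\Omega)$, so that the inequality $\mathcal{F}(\phi_t)\leq\mathcal{F}(\varphi)$ applies. The two-sided sup-norm comparison with $\varphi$ handles this. The differentiability itself is supplied entirely by Proposition \ref{Prop:projection_thm}, which is stated exactly for $\phi\in\mathcal{E}^1(M,\Omega)$ and continuous $v$, so no further regularity of $\varphi$ is needed.
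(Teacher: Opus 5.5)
Your proposal is correct and follows the paper's own argument: perturb along $P(\varphi+tv)$, use $P(\varphi+tv)\le\varphi+tv$ to see that $t\mapsto E(P(\varphi+tv))-\int_M(\varphi+tv)\,\mathrm{e}^F\Omega^n$ is maximal at $t=0$, then differentiate using Proposition \ref{Prop:projection_thm} and conclude for all $v\in C^0(M)$. Your additional check that $P(\varphi+tv)\in\mathcal{E}^1(M,\Omega)$ (via the sup-norm sandwich, monotonicity of $E$, and the full-mass argument through canonical approximants as in \eqref{eqn:contE1}) is a sound justification of a step the paper uses implicitly when it compares with $\mathcal{F}(\varphi)$.
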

\begin{proof}
Let $v\in C^0(M)$ and consider $t\to P(\varphi+tv)$. By Proposition \ref{Prop:projection_thm} it follows that 
\begin{equation}
    \label{eqn:Key}
    \frac{d}{dt}\left( E(P(\varphi+tv))-\int_M (\varphi+tv)\mathrm{e}^F \Omega^n\right )\big\vert_{t=0}=\int_M v (\MA_\varphi- \mathrm{e}^F \Omega^n)\,.
\end{equation}
Thus the trivial inequality $P(\varphi+tv)\leq \varphi+tv$ gives
$$
E(P(\varphi+tv))-\int_M (\varphi+tv)\mathrm{e}^F \Omega^n\leq \mathcal{F}(\varphi)\,,
$$
i.e. $t\to E(P(\varphi+tv))-\int_M(\varphi+tv)\mathrm{e}^F \Omega^n$ has a maximum for $t=0$. Hence $\int_M v(\MA_\varphi-\mathrm{e}^F \Omega^n)=0$ for any $v\in C^0(M)$, i.e. $\MA_\varphi=\mathrm{e}^F \Omega^n$.
\end{proof}

Finally, the following properness of the Ding functional will ensure the existence of a maximiser. 

\begin{lem}\label{lem:properness}
$ \mathcal{F} $ is proper with respect to $ E $. More precisely there is a constant $ C>0 $ such that
\begin{equation}
    \label{eqn:Properness}
    \mathcal{F}(\phi)\leq E(\phi)-\sup_M \phi +C \left \lvert E(\phi)-\sup_M \phi \right \rvert^{1/2}\,,
\end{equation}
for all $ \phi \in \mathcal{E}^1(M,\Omega) $.
\end{lem}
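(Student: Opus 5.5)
By the translation invariance $\mathcal{F}(\phi+c)=\mathcal{F}(\phi)$ and $E(\phi+c)=E(\phi)+c$ (immediate from \eqref{eqn:energy} and \eqref{eq:MAmass}), both sides of \eqref{eqn:Properness} are unchanged when $\phi$ is replaced by $\phi-\sup_M\phi$. So we may assume $\sup_M\phi=0$. Set $\mu:={\rm e}^F\Omega^n$, a probability measure with smooth bounded density. Since $\mathcal{F}(\phi)=E(\phi)-\int_M\phi\,d\mu$, the claim reduces to finding $C>0$ with
\[
\int_M(-\phi)\,d\mu\leq C\,\lvert E(\phi)\rvert^{1/2}\qquad\text{for all }\phi\in\mathcal{E}^1(M,\Omega),\ \sup_M\phi=0.
\]
I will prove two separate bounds on the left-hand side and then interpolate between them.

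\emph{First bound (uniform).} The measure $\mu$ is smooth, so $\QPSH(M,\Omega)\subseteq L^1(\mu)$. The left inequality in \eqref{eqn:L1_sup_comparison} of Lemma \ref{lem:rel_cptness} then gives a constant $C_0$ with $\int_M(-\phi)\,d\mu\leq C_0$ whenever $\sup_M\phi=0$.

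\emph{Second bound (linear in the energy).} First take $\phi$ bounded. The computation \eqref{eq:trick} shows that $j\mapsto\int_M\phi\,\Omega_\phi^j\wedge\Omega^{n-j}$ is non-increasing. Hence every term in \eqref{eqn:energy} is at most the $j=0$ term, which gives
\[
E(\phi)\leq\int_M\phi\,\Omega^n\leq 0 .
\]
Because $\mu\leq\|{\rm e}^F\|_\infty\Omega^n$ and $\phi\leq0$, we get $\int_M(-\phi)\,d\mu\leq C_1\int_M(-\phi)\Omega^n\leq C_1\lvert E(\phi)\rvert$. For general $\phi\in\mathcal{E}^1(M,\Omega)$, I would apply this to the canonical approximants $\max(\phi,-k)$ and pass to the limit. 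On the energy side, Proposition \ref{prop_Echi_incr} gives $E(\max(\phi,-k))\to E(\phi)$. On the integral side, monotone convergence handles $\int_M\phi\,d\mu$. The subtlety is that $\sup_M\max(\phi,-k)=0$ already, so no renormalisation is needed.

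\emph{Interpolation.} For $a,b,x\geq0$ we have $\min(a,bx)\leq\sqrt{a\,bx}$. Therefore
\[
\int_M(-\phi)\,d\mu\leq\min\bigl(C_0,\,C_1\lvert E(\phi)\rvert\bigr)\leq\sqrt{C_0C_1}\,\lvert E(\phi)\rvert^{1/2},
\]
which is the desired estimate with $C=\sqrt{C_0C_1}$.

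The only step needing care is the comparison $E(\phi)\leq\int_M\phi\,\Omega^n$ for unbounded $\phi$. It rests on the integration by parts in \eqref{eq:trick}, valid via Lemma \ref{lem:fakegradient} for bounded functions, and extends to $\mathcal{E}^1$ via the approximation argument above. I expect no genuine obstacle beyond checking that limit.
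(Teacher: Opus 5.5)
Your proof is correct, and it takes a different route from the paper's. The paper reduces to $\sup_M\phi=0$, $\phi$ bounded and $E(\phi)\le -1$, and then rescales. Setting $\psi=\epsilon\phi$ with $\epsilon=|E(\phi)|^{-1/2}$, it expands $\Omega_\psi^j\wedge\Omega^{n-j}$ in powers of $\epsilon$ to show $E(\psi)\ge-\tilde C$, so that $\psi\in\mathcal{E}^1_{\tilde C}(M,\Omega)$. It then uses Lemma \ref{lem:rel_cptness} to bound $\int_M\psi\,{\rm e}^F\Omega^n$ from below, and scaling back produces the factor $|E(\phi)|^{1/2}$.

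This is the Berman--Boucksom--Guedj--Zeriahi mechanism. It only needs $\int_M(-\phi)\,d\mu$ to be bounded on an energy sublevel set, which is what one has for singular measures of finite energy. Here the measure ${\rm e}^F\Omega^n$ is smooth, and you exploit that directly. Lemma \ref{lem:rel_cptness} already bounds $\int_M(-\phi)\,{\rm e}^F\Omega^n$ uniformly on all of $\{\sup_M\phi=0\}$. In fact $\psi$ itself has $\sup_M\psi=0$, so the paper's energy estimate for $\psi$ is not really needed.

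The point where your argument adds something is the small-energy regime. A uniform bound alone only gives $\mathcal{F}(\phi)\le E(\phi)-\sup_M\phi+C_0$, which is not of the stated form when $|E(\phi)-\sup_M\phi|$ is small. Your linear bound $\int_M(-\phi)\,{\rm e}^F\Omega^n\le\|{\rm e}^F\|_\infty|E(\phi)|$, coming from the monotonicity in \eqref{eq:trick}, covers exactly this case. The paper sets it aside by assuming $E(\phi)\le -1$ ``without loss of generality'', and no translation or rescaling reduces the general case to that one.

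The interpolation $\min(u,v)\le\sqrt{uv}$ then gives the inequality for every $\phi\in\mathcal{E}^1(M,\Omega)$, with no expansion of mixed Monge--Amp\`ere measures needed. Your passage to unbounded $\phi$ through the canonical approximants is also fine: $\sup_M\max(\phi,-k)=0$, monotone convergence handles the integral, and $E(\max(\phi,-k))\to E(\phi)$ is shown in the proof of Proposition \ref{prop_Echi_incr}.
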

\begin{proof}

Let $ \phi \in \mathcal{E}^1(M,\Omega) $. 
Without loss of generality assume $ E(\phi)\leq -1 $. Moreover by continuity of the Ding functional and of the Monge--Ampère energy along decreasing sequences (Proposition \ref{prop_Echi_incr}) and by translation invariance of the quantities in \eqref{eqn:Properness}, we can and will suppose that $\sup_M \phi=0$ and that $\phi\in \mathcal{E}^1(M,\Omega)\cap L^\infty(M)$.

Set $ \epsilon=|E(\phi)|^{-1/2} $ so that $ \psi=\epsilon \phi $ is still $ \Omega $-quaternionic psh. For any $ 1\leq j \leq n  $ we have
\[
\Omega_\psi^j\wedge \Omega^{n-j}= \Omega^n+\sum_{k=1}^{j}\binom{j}{k}\epsilon^k(\partial \partial_J \phi)^k \wedge \Omega^{n-k}\leq \Omega^n+N\epsilon \sum_{k=0}^{n}\Omega_\phi^k \wedge \Omega^{n-k}
\]
for some $ N\in \N $, therefore, since $\{\varphi\in \QPSH(M,\Omega) \mid \sup_M \varphi=0\}$ is compact by Lemma \ref{lem:rel_cptness}, we have
\[
E(\psi)=\frac{\epsilon}{n+1
}\sum_{j=0}^n\int_M \phi\, \Omega_\psi^j \wedge \Omega^{n-j} \geq 
\frac{\epsilon}{n+1}\int_M \phi\, \Omega^n + N\epsilon^2 E(\phi)\geq \frac{1}{n+1}\int_M\phi \, \Omega^n-N\geq -\tilde C\,,
\]
proving that $ \psi \in \mathcal{E}_{\tilde C}^1(M,\Omega) $. Moreover the same Lemma \ref{lem:rel_cptness} gives $ \int_M \psi\, \mathrm{e}^F \Omega^n\geq -C' $ for every $ \psi \in \mathcal{E}^1_{\tilde C}(M,\Omega) $. Therefore
\[
\int_M \phi \,\mathrm{e}^F \Omega^n =|E(\phi)|^{1/2}\int_M \psi \, \mathrm{e}^F \Omega^n\geq -C'|E(\phi)|^{1/2}\,,
\]
which clearly gives \eqref{eqn:Properness} and concludes the proof.
\end{proof}

We can now prove the existence of weak solutions to \eqref{eq_QMA}.

\begin{proof}[Proof of Theorem $\ref{Thm:existence}$]
    By translation invariance and properness of $ \mathcal{F} $ (Lemma \ref{lem:properness}) there exists $ C>0 $ large enough to ensure
	\[
	\sup_{\mathcal{E}^1(M,\Omega)}\mathcal{F}=\sup_{\mathcal{E}^1_C(M,\Omega)}\mathcal{F}\,.
	\]
    Let then $(\varphi_j)_{j\in \N} \subset\mathcal{E}^1_C(M,\Omega)$ be such that $\lim_{j\to + \infty}\mathcal{F}(\varphi_j)=\sup_{\mathcal{E}^1(M,\Omega)}\mathcal{F}$. By compactness of $\mathcal{E}^1_C(M,\Omega)$ (Proposition \ref{prop:cptness}) we can and will assume that $\varphi_j\to \varphi\in \mathcal{E}^1_C(M,\Omega)$. Thus the upper semicontinuity of $\mathcal{F}$ (Proposition \ref{prop_Echi_incr}) gives
    $$
    \mathcal{F}(\varphi)\geq \limsup_{j\to +\infty } \mathcal{F}(\varphi_j)=\sup_{\mathcal{E}^1(M,\Omega)}\mathcal{F}\,,
    $$
    i.e. $\varphi$ is a maximiser of $\mathcal{F}$. Proposition \ref{Prop:final} concludes the proof.
\end{proof}

\section{Quaternionic capacity and boundedness of solutions}\label{Sec 6}
The purpose of this section is twofold: introducing a  Monge--Amp\`{e}re capacity in the quaternionic setting and proving the following 

\begin{thm}\label{Thm:boundedness}
Let $(M^n,I,J,K,\Omega)$ be a compact locally flat HKT manifold such that $K_I(M)$ is holomorphically trivial. Then any solution $\phi\in  \mathcal{E}^1(M,\Omega)$
to \eqref{eq_QMA} is bounded. 
\end{thm}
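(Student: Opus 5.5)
We follow the strategy of Eyssidieux--Guedj--Zeriahi, which combines a Monge--Amp\`ere capacity with a volume--capacity comparison. First we introduce the quaternionic capacity
\[
\mathrm{Cap}_\Omega(E):=\sup\Big\{\int_E \MA_u \;\Big|\; u\in \QPSH(M,\Omega),\ 0\le u\le 1\Big\}
\]
for Borel sets $E\subseteq M$, and record its basic properties. It is monotone and countably subadditive. It vanishes on $Q$-polar sets by Proposition \ref{prop:NonPluripolar}. By the comparison principle (Proposition \ref{Prop:cps_principle}) and Lemma \ref{lem:rel_cptness} we also get the standard sublevel estimate
\[
\mathrm{Cap}_\Omega(\{\psi<-t\})\le \frac{C}{t}
\]
for $\psi\in\QPSH(M,\Omega)$ with $\sup_M\psi=0$.

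The key analytic input is a domination of the measure $\mu:=\mathrm{e}^F\Omega^n$ by capacity:
\[
\mu(E)\le A\,\mathrm{Cap}_\Omega(E)^{1+\delta}
\]
for some $A,\delta>0$. Since $\mathrm{e}^F$ is bounded, it suffices to prove that Lebesgue measure satisfies such a bound. We would proceed as follows.
\begin{enumerate}
\item Reduce to $E$ compact, and introduce the relative extremal function $V_E^*:=\big(\sup\{u\in\QPSH(M,\Omega)\mid u\le 0 \text{ on } E\}\big)^*$. Comparing $\mathrm{Cap}_\Omega$ with the Alexander--Taylor type capacity $\exp(-\sup_M V_E^*)$, exactly as in the complex case via the comparison principle, gives $\exp(-\sup_M V_E^*)\le C\,\mathrm{Cap}_\Omega(E)^{1/n}$ for small capacity.
\item Use a uniform integrability statement: quaternionic psh functions normalised by $\sup_M=0$ lie in $L^p$ for every $p<2$ (Sroka), with uniform bounds by the compactness in Lemma \ref{lem:rel_cptness}. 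Applying this to $V_E^*-\sup_M V_E^*$, which is $\le -\sup_M V_E^*$ on $E$, yields $\mathrm{Vol}(E)\le C(\sup_M V_E^*)^{-p}$.
\end{enumerate}
Combining the two steps, this route only produces a bound of the form $\mathrm{Vol}(E)\le C|\log \mathrm{Cap}_\Omega(E)|^{-p}$, which is too weak to conclude. The fallback is to replace the $L^p$ input by exponential integrability, i.e.\ a uniform Skoda-type bound
\[
\int_M \mathrm{e}^{-\alpha\psi}\,\Omega^n\le C \qquad \text{for some } \alpha>0,
\]
which would give $\mathrm{Vol}(E)\le C\exp(-\alpha\sup_M V_E^*)\le C\,\mathrm{Cap}_\Omega(E)^{\alpha/n}$. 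We can sidestep this whole issue using the local flatness of $M$. Locally, $\mu$ has bounded density with respect to Lebesgue measure on $\mathbb{H}^n$, and for such measures the local quaternionic theory already provides $\mu(E)\le A\,\mathrm{Cap}(E)^{2}$ with respect to the local relative capacity. This is the analogue of Ko\l odziej's estimate, available through the $L^p$-theory of the quaternionic Monge--Amp\`ere operator (Wan--Zhang, Wan--Kang) and the solvability of the Dirichlet problem in balls already used in Lemma \ref{Lem:support_MA_P(psi)}. Finally, we compare the local relative capacities with $\mathrm{Cap}_\Omega$ using a finite cover by quaternionic charts on which $\Omega=\partial\partial_Ju$.

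Given the domination estimate, the EGZ argument runs essentially verbatim. Normalise the solution so that $\sup_M\phi=0$ and set $g(t):=\mathrm{Cap}_\Omega(\{\phi<-t\})^{1/n}$. For $s\in[0,1]$ and $t>0$, the comparison principle (Proposition \ref{Prop:cps_principle}) applied to $\phi$ and the test function $s u-t-s$, with $0\le u\le1$, gives
\[
s^n\,\mathrm{Cap}_\Omega(\{\phi<-t-s\})\le \int_{\{\phi<-t\}}\MA_\phi=\mu(\{\phi<-t\})\le A\,\mathrm{Cap}_\Omega(\{\phi<-t\})^{1+\delta}.
\]
Since $\phi\in\mathcal{E}^1(M,\Omega)$ is unbounded, its Monge--Amp\`ere measure is not the bounded-function operator, so this step needs the comparison principle in $\mathcal{E}$. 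We obtain it by applying Proposition \ref{Prop:cps_principle} to the canonical approximants $\max(\phi,-j)$ and passing to the limit via Lemma \ref{lem:max_princ_Unbounded}, using that $\MA_\phi$ has full mass. The inequality above yields $s\,g(t+s)\le A^{1/n}g(t)^{1+\delta}$. By the De Giorgi type lemma (EGZ, Lemma 2.3), $g(t)=0$ for all $t\ge t_0$, where $t_0$ is controlled by $g(0)$ and by how fast $g$ decays. Since $g(t)\lesssim t^{-1/n}$, we can take $t_0$ uniform. A set of zero capacity has zero Lebesgue measure, because Lebesgue measure is dominated by $\mathrm{Cap}_\Omega$; hence $\phi\ge -t_0$ almost everywhere, and therefore everywhere, since $\phi$ is $\Omega$-quaternionic psh (mean value inequality). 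This proves that $\phi$ is bounded.

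The main obstacle is the volume--capacity domination in the quaternionic setting. The weaker $L^p$ integrability ($p<2$) of quaternionic psh functions means the complex-case shortcut through Skoda's theorem is not directly available. I would try first to import the local Ko\l odziej-type estimate for quaternionic capacity in balls of $\mathbb{H}^n$ and glue it using local flatness.
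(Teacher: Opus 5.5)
Your proposal is correct and is essentially the paper's proof. It uses the same ingredients: a volume--capacity domination from a local estimate in quaternionic balls, transferred to $\mathrm{Cap}_\Omega$ through a finite cover (Lemmas \ref{lem:Cap_C_comparable} and \ref{Lem:volcap}); the comparison principle in $\mathcal{E}(M,\Omega)$ via canonical approximants; the inequality $s^n\,\mathrm{Cap}_\Omega(\{\phi<-t-s\})\le\int_{\{\phi<-t\}}\MA_\phi$ (Lemma \ref{Lem_CapMA}); and the de Giorgi lemma. The one cosmetic difference is that you get the decay of $g$ from a $C/t$ sublevel bound rather than from Proposition \ref{prop:NonPluripolar}. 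One correction: the local input actually available, and used in the paper, is \cite[Lem. 9]{Sro}, namely $\lambda(E)\le C_\alpha\,\mathrm{Cap^{WK}}(E)^{1+\alpha}$ for each $\alpha\in(0,1)$. It is not the endpoint exponent $2$ you assert, nor does it come from the Wan--Zhang/Wan--Kang theory, but since any $\delta>0$ suffices your argument goes through unchanged.
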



For any Borel subset $ E\subseteq M $ we define the \emph{quaternionic Monge--Amp\`{e}re capacity}
\begin{equation}\label{eq_capacity}
	\mathrm{Cap}_\Omega(E):=\sup \left\{ \int_E \MA_u \mid u \in \QPSH(M,\Omega)\,, \,\,0\leq u \leq 1 \right\}\,.
\end{equation}
We extend the definition of  $\mathrm{Cap}_\Omega$ to arbitrary subsets $ E\subseteq M $ by
\[
	\mathrm{Cap}_\Omega(E):=\sup \{ \mathrm{Cap}_\Omega(K) \mid K\subseteq E \text{ is compact} \}\,.
\]
It is not difficult to show that the capacity satisfies the following properties:\begin{enumerate}
	\item {\em Monotonicity}: if $ E_1\subseteq E_2\subseteq M $ are Borel subsets, then
		\[
		\mathrm{Vol}(E_1)\leq \mathrm{Cap}_\Omega(E_1)\leq \mathrm{Cap}_\Omega(E_2)\leq \mathrm{Cap}_\Omega(M)=\mathrm{Vol}(M)=1\,.
		\]

\item {\em Subadditivity}: if $\{E_j\}_{j\in\N}$ is a family of Borel subsets of $ M $, then $ \mathrm{Cap}_\Omega(\bigcup E_j)\leq \sum \mathrm{Cap}_\Omega(E_j) $. Moreover, if $E_j\subseteq E_{j+1}$, then $ \mathrm{Cap}_\Omega(\bigcup E_j)=\lim_{j\to +\infty} \mathrm{Cap}_\Omega(E_j)$.
\end{enumerate}

In \cite{Wan-Kang}  Wan and Kang introduced the following \emph{relative Monge--Amp\`{e}re capacity} on a domain $A$ of $\H^n$:
\[
\mathrm{Cap^{WK}}(E,A):=\sup \left\{ \int_E (\partial \partial_J u)^n\mid u\in  \QPSH(A)\,, \,\, 0\leq u  \leq 1 \right\}\,,
\]
where $ E\subseteq A $ is a Borel set. Since we are assuming $(M,I,J,K)$ locally flat,  $\mathrm{Cap^{WK}}(E,A)$ induces a capacity on $M$ as follows. Let $ U_1,\dots, U_N$ be a finite open cover of $M$ made by subsets of the form $U_j=\{x\in M \mid \rho_j(x)<0 \}$, where $ \rho_j $ is a smooth strictly quaternionic psh function on a neighbourhood of $ \bar U_j $. Choose another open cover $ V_1,\dots,V_N $ such that $ V_j\subseteq U_j $
and define
\[
\mathrm{Cap^{WK}}(E):=\sum_{j=1}^N \mathrm{Cap^{WK}}(E\cap V_j,U_j)\,,
\]
for every Borel subset $ E\subseteq M$. The next lemma says that the quaternionic Monge--Ampère capacity and the Wan-Kang capacities are comparable. This is analogous to the K\"ahler case (see \cite[Inequality (1.1)]{Kolodziej} and also \cite[Prop. 2.10]{Guedj-Zeriahi 2005}). 

\begin{lem}\label{lem:Cap_C_comparable}
	There exists a constant $ C \geq 1 $ such that for every Borel subset $ E\subseteq M $
	\[
	 C^{-1}\mathrm{Cap^{WK}}(E)\leq \mathrm{Cap}_\Omega(E)\leq C \cdot \mathrm{Cap^{WK}}(E)\,.
	\]
\end{lem}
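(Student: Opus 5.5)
We follow the Kähler-case argument of Kołodziej and Guedj--Zeriahi, comparing the two capacities chart by chart. Fix local potentials: on a neighbourhood of each $\bar U_j$ write $\Omega=\partial\partial_J u_j$ with $u_j$ smooth. Shrinking the cover if needed, we may assume each $u_j$ is strictly quaternionic psh on a neighbourhood of $\bar U_j$. This is possible because $\Omega$ is q-positive, and we may add to $u_j$ a pluriharmonic function without changing $\partial\partial_J u_j$. Choose constants $A_j\ge 1$ with $\|u_j\|_{L^\infty(U_j)}\le A_j$ and $\partial\partial_J\rho_j\le A_j\,\Omega$, as well as $\Omega\le A_j\partial\partial_J\rho_j$ on $U_j$. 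By monotonicity and subadditivity of both set functions it suffices to prove, for each $j$ and every Borel $E$,
\[
\mathrm{Cap}^{\mathrm{WK}}(E\cap V_j,U_j)\le C\,\mathrm{Cap}_\Omega(E)
\qquad\text{and}\qquad
\mathrm{Cap}_\Omega(E\cap V_j)\le C\,\mathrm{Cap}^{\mathrm{WK}}(E\cap V_j,U_j).
\]
The second inequality then gives $\mathrm{Cap}_\Omega(E)\le \sum_j \mathrm{Cap}_\Omega(E\cap V_j)\le C\,\mathrm{Cap}^{\mathrm{WK}}(E)$.

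\emph{Upper bound for $\mathrm{Cap}_\Omega$.} Let $u\in\QPSH(M,\Omega)$ with $0\le u\le 1$. On $U_j$ the function $v:=(u_j+u+A_j)/(2A_j+1)$ is quaternionic psh with values in $[0,1]$. Moreover $\MA_u=(\partial\partial_J(u_j+u))^n=(2A_j+1)^n(\partial\partial_J v)^n$ on $U_j$; this identity holds for bounded functions because both sides are defined by the same local Bedford--Taylor-type construction (Theorem~\ref{thm:cont}). Hence $\int_{E\cap V_j}\MA_u\le (2A_j+1)^n\,\mathrm{Cap}^{\mathrm{WK}}(E\cap V_j,U_j)$. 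Taking the supremum over $u$ gives the second inequality.

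\emph{Lower bound for $\mathrm{Cap}_\Omega$.} This is the harder direction, since a local competitor $v\in\QPSH(U_j)$ with $0\le v\le 1$ must be glued into a global $\Omega$-quaternionic psh function. Pick $W_j$ with $\bar V_j\subset W_j\Subset U_j$ and a constant $c_j>0$ such that $\rho_j<-2c_j$ on $\bar V_j$ and $\rho_j>-c_j$ near $\partial U_j$. Let $B_j$ be a constant with $\partial\partial_J\rho_j\le B_j\Omega$ on a neighbourhood of $\bar U_j$. For small $\delta>0$ set
\[
w:=\max\bigl(\delta v+\delta\,\rho_j/c_j,\;-\delta\bigr)\ \text{on }U_j,\qquad w:=-\delta\ \text{off }U_j.
\]
Near $\partial U_j$ we have $\delta v+\delta\rho_j/c_j>-\delta$ only if $v>1$, which is impossible, so the maximum equals $-\delta$ there and $w$ glues continuously with the constant outside. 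Since $\partial\partial_J(\delta v+\delta\rho_j/c_j)\ge-(\delta B_j/c_j)\Omega$ as currents, Proposition~\ref{prop:1} and the closedness of $\QPSH$ under maxima show that $w\in\QPSH(M,\Omega)$ once $\delta\le c_j/B_j$. Moreover $-\delta\le w\le\delta$. On $V_j$ we have $\delta v+\delta\rho_j/c_j\ge\delta\rho_j/c_j$, and we adjust constants (replace $\rho_j/c_j$ by $\rho_j/(2c_j)+\tfrac12$-type normalisations) so that the first branch of the maximum dominates on a neighbourhood of $\bar V_j$. Then, by the local maximum principle (Proposition~\ref{Prop:max_princ}), $\MA_w\ge\delta^n(\partial\partial_J v)^n$ on $V_j$ after discarding nonnegative mixed terms. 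The function $\tilde u:=(w+\delta)/(2\delta)$ still satisfies $\Omega+\partial\partial_J\tilde u\ge 0$ when $2\delta\ge 1$ fails; so instead use $\tilde u:=w+\delta\in\QPSH(M,\Omega)$, which has values in $[0,2\delta]\subset[0,1]$. Then $\MA_{\tilde u}\ge(\partial\partial_J(\delta v+\delta\rho_j/c_j+ \text{smooth}))^n\ge\delta^n(\partial\partial_J v)^n$ on $V_j$, using that $\Omega+\partial\partial_J(\delta\rho_j/c_j)\ge0$ and expanding the wedge product of positive currents. This gives $\int_{E\cap V_j}(\partial\partial_Jv)^n\le\delta^{-n}\mathrm{Cap}_\Omega(E)$.

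The delicate step is the gluing and the order of constants in the lower bound. One must ensure simultaneously that the first branch dominates on $V_j$, that the constant branch dominates near $\partial U_j$, and that $\Omega$ absorbs the negativity of $\delta\,\partial\partial_J\rho_j$; this is where strict quaternionic plurisubharmonicity of $\rho_j$ and the choice $V_j\Subset U_j$ are used. If needed, one first approximates $v$ by smooth quaternionic psh functions (Proposition~\ref{prop_convolution}) on a slightly smaller domain, proves the inequality for compact $E$, and concludes by inner regularity and Theorem~\ref{thm:cont}.
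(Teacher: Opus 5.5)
The upper bound $\mathrm{Cap}_\Omega\le C\,\mathrm{Cap^{WK}}$ is fine. The strategy for the lower bound is also the right one: Ko\l odziej's gluing of a local competitor into a global $\Omega$-quaternionic psh function, which the paper invokes after writing $\mathrm{Cap^{WK}}(E\cap V_j,U_j)$ via the relative extremal function $v_j^*$. However, your gluing fails because of a sign error.

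\emph{The failure.} Since $\rho_j<-2c_j$ on $\bar V_j$ and $0\le v\le 1$, you get $\delta v+\delta\rho_j/c_j<\delta-2\delta=-\delta$ on $\bar V_j$. So $w\equiv-\delta$ on $V_j$, and it sees nothing of $(\partial\partial_J v)^n$. Near $\partial U_j$ you have $\rho_j>-c_j$, hence $\delta v+\delta\rho_j/c_j>-\delta$ for every $v\ge 0$ (not ``only if $v>1$''). So $w$ equals the first branch there and tends to $\delta v\ge 0$ at $\partial U_j$; its extension by $-\delta$ is not even upper semicontinuous. No normalisation $a\rho_j+b$ with $a>0$ can fix this. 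Since $\rho_j$ is larger near $\partial U_j$ than on $\bar V_j$, a positive multiple of it always makes the first branch win at the boundary rather than in the interior. The same confusion appears in your remark that $\Omega$ absorbs the negativity of $\delta\,\partial\partial_J\rho_j$: with $+\rho_j$ there is no negativity, and $\Omega$ plays no role in your construction.

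\emph{The fix.} Glue with $-\rho_j$ and let $\Omega$ pay for it:
\[
w:=\max\bigl(\delta v-\delta\rho_j/c_j-3\delta,\;-\delta\bigr)\ \text{on } U_j,\qquad w:=-\delta\ \text{on } M\setminus U_j .
\]
On $\bar V_j$ the first branch is $>2\delta-3\delta=-\delta$, and near $\partial U_j$ it is $<\delta+\delta-3\delta=-\delta$. So $w$ is now the first branch near $\bar V_j$ and the constant near $\partial U_j$.

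The first branch is no longer quaternionic psh, but its $\partial\partial_J$ is $\ge-(\delta/c_j)\partial\partial_J\rho_j\ge-(\delta B_j/c_j)\Omega$. Hence for $\delta\le c_j/B_j$ the function $u_j+w$ is quaternionic psh on $U_j$, and $w\in\QPSH(M,\Omega)$. Moreover $0\le w+\delta\le\delta(R_j-1)$ with $R_j:=\sup_{U_j}(-\rho_j)/c_j$. So for $\delta$ small, $\tilde u:=w+\delta$ is admissible in the definition of $\mathrm{Cap}_\Omega$.

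Near $\bar V_j$ one has $\Omega+\partial\partial_J\tilde u=\bigl(\Omega-(\delta/c_j)\partial\partial_J\rho_j\bigr)+\delta\,\partial\partial_J v$, and the first summand is a non-negative smooth form. This gives $\MA_{\tilde u}\ge\delta^n(\partial\partial_J v)^n$ there, and hence $\int_{E\cap V_j}(\partial\partial_J v)^n\le\delta^{-n}\mathrm{Cap}_\Omega(E)$.

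With this correction your argument works. Working directly with arbitrary competitors $v$ then lets you avoid the relative extremal function of Wan--Kang used in the paper.
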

\begin{proof}
From \cite[Prop. 3.3]{Wan-Kang} we have
	\[
	\mathrm{Cap^{WK}}(E\cap V_j,U_j)=\int_{E\cap V_j}(\partial \partial_J v^*_j)^n\,,
	\]
	where we set
	\[
	v_j:=
	\sup \{ u \text{ quaternionic psh in }U_j \mid  u\leq 0, u\vert_{E\cap V_j}\leq -1 \}\,.
	\]
Observe that the upper semicontinuous regularisation $ v_j^* $ is quaternionic psh by Proposition \ref{prop_basic} and satisfies $ -1\leq v_j^* \leq 0 $ as well as $ v_j^*=0 $ on $ \partial U_j $. The lemma follows by the same argument as in \cite[p. 670]{Kolodziej}.
\end{proof}

In view of the last Lemma, we can prove the following control of the volume with the quaternionic Monge--Ampère capacity.

\begin{lem}\label{Lem:volcap}
For every $\alpha \in (0,1)$ there exists a constant $C$ such that for every Borel subset $E \subseteq M$ we have
\[
\int_E \Omega^n \leq C \cdot \mathrm{Cap}_\Omega(E)^{1+\alpha}\,.
\]
\end{lem}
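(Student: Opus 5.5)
By Lemma \ref{lem:Cap_C_comparable} it suffices to prove $\int_E\Omega^n\le C\,\mathrm{Cap^{WK}}(E)^{1+\alpha}$. Both sides are local: $\int_E\Omega^n\le\sum_j\int_{E\cap V_j}\Omega^n$, and $\Omega^n$ is comparable to Lebesgue measure in each chart. So we fix a chart $U=U_j$, $V=V_j$ and aim for
\[
\mathrm{Vol}(E\cap V)\le C\,\mathrm{Cap^{WK}}(E\cap V,U)^{1+\alpha}.
\]
Summing over the finitely many charts and using $\sum a_j^{1+\alpha}\le(\sum a_j)^{1+\alpha}$ then gives the statement. By inner regularity of both sides we may assume $E\subset V$ is compact.

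The plan follows the Kähler argument of Ko\l odziej and Guedj--Zeriahi, in which volume is controlled through exponential integrability of relative extremal functions. The quaternionic analogue of exponential integrability is weaker, and this is the main obstacle. Sroka's result only gives $L^p_{\mathrm{loc}}$ for $p<2$, so no uniform Skoda-type estimate $\int e^{-\gamma u}\le C$ is available, and we must settle for polynomial integrability. That is why we only expect exponents $1+\alpha$ with $\alpha<1$.

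For compact $E\subset V$, let $u_E:=v^*$ be the relative extremal function from the proof of Lemma \ref{lem:Cap_C_comparable}, and set $c:=\mathrm{Cap^{WK}}(E,U)=\int_E(\partial\partial_J u_E)^n$, which we may assume small. Define the rescaled function
\[
w:=c^{-1/n}u_E.
\]
Its total Monge--Amp\`ere mass is $\int_U(\partial\partial_J w)^n=c^{-1}\cdot c=1$, and $w\le -c^{-1/n}$ on $E$ up to a Q-polar set, which has Lebesgue measure zero.

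The key step is a uniform estimate: for each $p<2$,
\[
\int_V |w|^p\,dV\le C_p
\]
for all quaternionic psh $w\le 0$ on $U$ that vanish on $\partial U$ and have Monge--Amp\`ere mass at most $1$. We would argue by contradiction and compactness, as in Lemma \ref{lem:rel_cptness}. Functions with mass at most $1$ form a family controlled in $L^1_{\mathrm{loc}}$ by the comparison principle against $A\rho$, where $\rho$ is the defining function of $U$. Sroka's $L^p$ estimate for $p<2$, made uniform over compact families (for instance via a Chern--Levine--Nirenberg argument), then supplies the constant $C_p$.

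Granting this, Chebyshev's inequality gives
\[
\mathrm{Vol}(E)\le c^{p/n}\int_V|w|^p\le C_p\,c^{p/n}.
\]
This is only a power $p/n$ of the capacity, which is far too weak when $n\ge2$, so the rescaling above must be refined. Instead we compare $u_E$ with the Green-type function, as in Ko\l odziej's proof, or use the sublevel-set capacity estimate
\[
\mathrm{Cap}(\{u<-s\})\le s^{-n}\int|u|(\partial\partial_J u)^n.
\]
Iterating this over dyadic levels of $u_E$, as in Ko\l odziej's decay argument, should produce any exponent $1+\alpha$ with $\alpha<1$, where the bound $\alpha<1$ comes from the restriction $p<2$. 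Making this iteration work is the step we expect to be hardest.
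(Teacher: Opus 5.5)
Your reduction matches the paper's. You cover $M$ by finitely many charts (balls) on which $\Omega^n\le C_1\,d\lambda$, prove a local bound for $\mathrm{Cap^{WK}}$, and return to $\mathrm{Cap}_\Omega$ via Lemma \ref{lem:Cap_C_comparable}. The reduction to compact $E$ and the inequality $\sum a_j^{1+\alpha}\le(\sum a_j)^{1+\alpha}$ are also fine.

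The genuine gap is the local inequality $\lambda(E\cap\mathbb{B})\le C_\alpha\,\mathrm{Cap^{WK}}(E\cap\mathbb{B})^{1+\alpha}$. It carries all the content of the lemma, and you do not prove it. The paper does not prove it either: it is exactly \cite[Lem.~9]{Sro}. Quoting that result where your last two paragraphs begin completes your argument as in the paper.

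Your own attempt at the local bound does not work, for three reasons.

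\textbf{The exponent is too weak, and the proposed repair cannot fix it.} Rescaling plus Chebyshev gives only $\mathrm{Vol}(E)\le C_p\,c^{p/n}$ with $p<2$, which suffices only when $n=1$. The suggested repair is not an argument. The Ko\l odziej/de Giorgi iteration (Lemma \ref{Lem:EGZ}) takes a volume--capacity inequality as input and produces $L^\infty$ bounds, so it cannot be used to generate one.

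\textbf{The threshold $\alpha<1$ does not come from $p<2$.} The right benchmark is small balls. The function $-|q|^{-2}$ is quaternionic psh and, for $n\ge 2$, has vanishing Monge--Amp\`ere measure off the origin. This gives $\mathrm{Cap^{WK}}(\mathbb{B}_r,\mathbb{B}_1)\sim r^{2n}$ against $\lambda(\mathbb{B}_r)\sim r^{4n}$, so the borderline exponent is $2$.

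For your rescaling strategy to reach $1+\alpha$, you would need $\int|w|^p\le C_p$ for every $p<2n$. This bound must hold uniformly over normalized functions with zero boundary values and Monge--Amp\`ere mass at most $1$; it is a statement that mass controls the strength of the singularity. Sroka's $L^p_{\mathrm{loc}}$ bound for $p<2$ is the wrong input. It concerns general quaternionic psh functions, and its sharpness comes from functions such as $-|q_1|^{-2}$ of one quaternionic variable, which carry no Monge--Amp\`ere mass when $n\ge 2$.

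\textbf{The uniformity argument is unsupported.} A bound on the total mass of $(\partial\partial_J w)^n$ does not allow comparison with $A\rho$ through the comparison principle. That comparison needs $(\partial\partial_J w)^n\le(\partial\partial_J (A\rho))^n$ as measures.
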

\begin{proof}
Take a finite cover of $M$ of open balls $\{\mathbb{B}_j\}_{j=1}^N$ of radius $R>0$. Since $M$ is locally flat we may further assume that $\Omega^n\vert_{\mathbb{B}_j} \leq C_1 d\lambda $, for a  uniform constant $C_1>0$, where $\lambda$ denotes the Lebesgue measure. Thanks to \cite[Lem. 9]{Sro}, for any $\alpha \in (0,1)$ there exists a constant $C_2$, depending only on $\alpha$ and the radius $R$, such that $\lambda(E\cap \mathbb{B}_j) \leq C_2 \mathrm{Cap^{WK}}(E\cap \mathbb{B}_j)^{1+\alpha}$. Then, thanks to Lemma \ref{lem:Cap_C_comparable} there exists a constant $C_3\geq 1$ such that
\[
\begin{split}
\int_{E}\Omega^n &\leq \sum_{j=1}^N \int_{E\cap \mathbb{B}_j} \Omega^n \leq C_1C_2 \sum_{j=1}^N \mathrm{Cap^{WK}} (E \cap \mathbb{B}_j)^{1+\alpha}\\
&\leq C_1C_2C_3 \sum_{j=1}^N{\rm Cap}_\Omega(E\cap \mathbb{B}_j)^{1+\alpha}\leq C_1C_2C_3 N {\rm Cap}_\Omega(E)^{1+\alpha}\,,
\end{split}
\]
where the last inequality is due to the monotonicity of the capacity.
\end{proof}

Next, we will need the following extension of the comparison principle to $\mathcal{E}(M,\Omega)$.

\begin{prop}\label{Comparison}
    For any $\phi,\psi \in\mathcal{E}(M,\Omega)$
	\[
	\int_{\{\phi>\psi\}}\MA_\phi\leq \int_{\{\phi>\psi\}}\MA_\psi\,.
	\]
\end{prop}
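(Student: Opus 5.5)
The plan is the standard reduction to the bounded comparison principle (Proposition \ref{Prop:cps_principle}) through canonical approximants, following the complex case \cite{Guedj-Zeriahi 2005}. Set $\phi^k:=\max(\phi,-k)$ and $\psi^k:=\max(\psi,-k)$. These are bounded and lie in $\QPSH(M,\Omega)\cap L^\infty(M)$. The bounded comparison principle then gives
\[
\int_{\{\phi^k>\psi^k\}}\MA_{\phi^k}\leq \int_{\{\phi^k>\psi^k\}}\MA_{\psi^k}\,.
\]
The goal is to pass to the limit $k\to+\infty$ on both sides, using the maximum principle to identify the approximants' measures with $\MA_\phi$ and $\MA_\psi$ away from the sets $\{\phi\le -k\}$ and $\{\psi\le -k\}$.

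For the left-hand side, observe that $\{\phi>\psi\}\cap\{\phi>-k\}\subseteq\{\phi^k>\psi^k\}$. Moreover, on $\{\phi>-k\}$ we have $\MA_{\phi^k}=\MA_\phi$ by Lemma \ref{lem:max_princ_Unbounded}, applied with the bounded function $-k$ in the role of $\psi$. Hence
\[
\int_{\{\phi>\psi\}\cap\{\phi>-k\}}\MA_\phi\leq \int_{\{\phi^k>\psi^k\}}\MA_{\phi^k}\,.
\]
As $k\to\infty$ the left side increases to $\int_{\{\phi>\psi\}}\MA_\phi$, by monotone convergence and because $\MA_\phi$ charges no Q-polar set (Proposition \ref{prop:NonPluripolar}).

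For the right-hand side, split $\{\phi^k>\psi^k\}$ into its parts inside $\{\psi>-k\}$ and inside $\{\psi\le -k\}$. On $\{\psi>-k\}$ one has $\psi^k=\psi$, and $\phi^k>\psi>-k$ forces $\phi^k=\phi$. So this part is $\{\phi>\psi\}\cap\{\psi>-k\}$, where $\MA_{\psi^k}=\MA_\psi$ by Lemma \ref{lem:max_princ_Unbounded} again. The second part contributes at most
\[
\int_{\{\psi\le -k\}}\MA_{\psi^k}=1-\int_{\{\psi>-k\}}\MA_{\psi^k}=1-\int_{\{\psi>-k\}}\MA_\psi\,,
\]
using the total mass identity \eqref{eq:MAmass} for the bounded $\psi^k$. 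Since $\psi\in\mathcal{E}(M,\Omega)$, this tends to $1-\MA_\psi(M\setminus\{\psi=-\infty\})=0$, again by Proposition \ref{prop:NonPluripolar}. Therefore the right-hand side is at most $\int_{\{\phi>\psi\}}\MA_\psi+o(1)$, and the inequality follows in the limit.

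The main obstacle is exactly this tail term: showing that $\MA_{\psi^k}$ puts vanishing mass on $\{\psi\le-k\}$. This is where the full-mass hypothesis $\psi\in\mathcal{E}(M,\Omega)$ is essential. Only the full mass of $\psi$ is needed; for $\phi$ the monotone limit suffices, so membership of $\phi$ in $\mathcal{E}$ is not really used.

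A subsidiary point is the identification $\mathbf 1_{\{\psi>-k\}}\MA_{\psi^k}=\mathbf 1_{\{\psi>-k\}}\MA_\psi$. It follows from the monotone definition in Lemma \ref{Lem:MAdef} combined with Proposition \ref{Prop:max_princ}: for $j\ge k$, $\mathbf 1_{\{\psi>-k\}}\MA_{\psi^j}=\mathbf 1_{\{\psi>-k\}}\MA_{\psi^k}$.
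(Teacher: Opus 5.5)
Your argument is correct. It shares the paper's overall strategy: canonical approximants, the bounded comparison principle of Proposition \ref{Prop:cps_principle}, the maximum principle, and vanishing of the tail mass. The bookkeeping, however, is genuinely different.

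The paper takes two independent indices $\phi_k=\max(\phi,-k)$ and $\psi_j=\max(\psi,-j)$. It uses that $\int_B\MA_{\phi_k}\to\int_B\MA_\phi$ and $\int_B\MA_{\psi_j}\to\int_B\MA_\psi$ for every Borel set $B$, which, as written, invokes the full mass of \emph{both} $\phi$ and $\psi$. It then passes to iterated limits to get $\int_{\{\phi>\psi\}}\MA_\phi\le\int_{\{\phi\geq\psi\}}\MA_\psi$, and recovers the strict set by replacing $\psi$ with $\psi+\epsilon$.

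You instead use a single index and the exact identity $\{\phi^k>\psi^k\}\cap\{\psi>-k\}=\{\phi>\psi\}\cap\{\psi>-k\}$. This produces the strict sets directly and avoids both the double limit and the $\epsilon$-shift.

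Your route is shorter and proves a sharper statement: $\phi\in\QPSH(M,\Omega)$ is arbitrary and only $\psi\in\mathcal{E}(M,\Omega)$ is needed. This is the natural hypothesis, since taking $\phi$ to be a constant larger than $\sup_M\psi$ turns the inequality into $\MA_\psi(M)\geq 1$. The paper's version, in turn, is symmetric in $\phi,\psi$ and isolates the setwise convergence $\MA_{\phi_k}(B)\to\MA_\phi(B)$ as a self-contained fact.

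A minor point: Proposition \ref{prop:NonPluripolar} is not actually needed in your argument. On the left this is because $\{\phi>\psi\}\subseteq\{\phi>-\infty\}$. For the tail, $\MA_\psi(\{\psi=-\infty\})=0$ already follows from the definition of $\MA_\psi$ as an increasing limit of measures carried by $\{\psi>-j\}$.
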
 
\begin{proof}
Consider the canonical approximants $ \phi_k:=\max(\phi,-k),\psi_j:=\max(\psi,-j) $. By the comparison principle for bounded $ \Omega $-quaternionic psh functions (Proposition \ref{Prop:cps_principle}) we have
\begin{equation}\label{eqn:incomp}
\int_{\{\phi>\psi_j\}} \MA_{\phi_k} \leq \int_{\{\phi_k>\psi_j\}} \MA_{\phi_k} \leq \int_{\{\phi_k>\psi_j\}} \MA_{\psi_j} \leq \int_{\{\phi_k>\psi\}} \MA_{\psi_j}\,.
\end{equation}
Note that for any Borel set $B$,
\[
\begin{aligned}
\lim_{k\to +\infty}\int_{B }\MA_{\phi_k}&=\lim_{k\to +\infty }\left( \int_{B \cap \{ \phi> -k\}}\MA_{\phi_k}+\int_{B\cap \{ \phi\leq -k\}}\MA_{\phi_k}\right)=\int_{B}\MA_{\phi}\,, 
\end{aligned}
\]
since $\phi \in \mathcal{E}(M,\Omega)$, and by definition
\[
\lim_{k\to +\infty} \int_{\{\phi\leq -k\}} \MA_{\phi_k}=1-\lim_{k\to +\infty} \int_{\{\phi>-k\}} \MA_{\phi_k} =1-\int_M \MA_\phi =0\,. 
\]
Similarly, $\lim_{j\to +\infty}\int_{B} \MA_{\psi_j}=\int_{B} \MA_{\psi}$. Thus, letting first $j\to +\infty$ and then $k\to +\infty$ in \eqref{eqn:incomp} yields
\begin{align*}
    \int_{\{\varphi>\psi\}}\MA_\varphi&=\lim_{k\to +\infty}\int_{\{\varphi>\psi\}}\MA_{\phi_k}=\lim_{k\to +\infty}\lim_{j\to +\infty}\int_{\{\varphi>\psi_j\}}\MA_{\phi_k}\\
    &\leq \lim_{k\to +\infty}\lim_{j\to +\infty} \int_{\{\varphi_k>\psi\}}\MA_{\psi_j}=\lim_{k\to +\infty}\int_{\{\varphi_k>\psi\}}\MA_\psi\leq \int_{\{\varphi\geq \psi\}}\MA_\psi.
\end{align*}
Replacing $ \psi $ with $ \psi+\epsilon $ and letting $ \epsilon $ decrease to $ 0 $ we conclude.
\end{proof}

\begin{lem}\label{Lem_CapMA}
Let $\phi \in \mathcal{E}^1(M,\Omega)$, then for any $s>0$ and $0\leq t \leq 1 $, we have
\[
t^n \mathrm{Cap}_\Omega ( \phi<-s-t) \leq \int_{\{\phi<-s\}} \MA_\phi\,.
\]
\end{lem}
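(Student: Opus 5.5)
This is the standard Eyssidieux--Guedj--Zeriahi argument, and I would run it with the comparison principle of Proposition \ref{Comparison}. Fix a compact set $K\subseteq\{\phi<-s-t\}$ and a competitor $u\in\QPSH(M,\Omega)$ with $0\leq u\leq 1$. By the definition of $\mathrm{Cap}_\Omega$ on arbitrary sets (inner regularity over compacts), it suffices to show
\[
t^n\int_K \MA_u\leq \int_{\{\phi<-s\}}\MA_\phi .
\]

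First I may assume $t>0$, since $t=0$ is trivial. Set $\psi:=tu-s-t$. This is $\Omega$-quaternionic psh because $tu=tu+(1-t)\cdot 0$ is a convex combination of elements of $\QPSH(M,\Omega)$. It is bounded, hence lies in $\mathcal{E}(M,\Omega)$ by \eqref{eq:MAmass}, and $\MA_\psi\geq t^n\MA_u$.

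The inequality $\MA_\psi\geq t^n\MA_u$ needs a short justification. Write $\Omega+\partial\partial_J\psi=(1-t)\Omega+t(\Omega+\partial\partial_J u)$ and expand multilinearly. Every mixed term $\Omega^{n-k}\wedge(\Omega+\partial\partial_J u)^k$ is a positive measure, by the construction of the operator for bounded functions in Section \ref{sec 3}. So dropping all terms except $k=n$ gives the claim.

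Next I compare sets. On $K$ we have $\phi<-s-t\leq\psi$. Everywhere we have $\psi\leq -s$, so $\{\phi<\psi\}\subseteq\{\phi<-s\}$. Thus $K\subseteq\{\psi>\phi\}\subseteq\{\phi<-s\}$. Applying Proposition \ref{Comparison} with the roles $\psi$ (first) and $\phi$ (second), both in $\mathcal{E}(M,\Omega)$, gives
\[
t^n\int_K\MA_u\leq\int_{\{\psi>\phi\}}\MA_\psi\leq\int_{\{\psi>\phi\}}\MA_\phi\leq\int_{\{\phi<-s\}}\MA_\phi .
\]
Taking the supremum over $u$ and then over $K$ concludes the proof.

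The only delicate point is positivity of the mixed terms used in the expansion. For bounded functions this is standard from the inductive definition via q-positive currents. I would also check that Proposition \ref{Comparison} is applicable with one bounded and one unbounded argument in $\mathcal{E}(M,\Omega)$, which it is as stated.
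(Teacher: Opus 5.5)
Your proposal is correct and is essentially the paper's proof: the same competitor $tu-s-t$, the same inclusions $\{\phi<-s-t\}\subseteq\{\phi<tu-s-t\}\subseteq\{\phi<-s\}$, and the same application of the comparison principle of Proposition \ref{Comparison}. Your multilinear expansion justifying $t^n\MA_u\leq\MA_{tu}$ makes explicit a step the paper leaves implicit. The reduction to compact $K$ is harmless but unnecessary, since $\{\phi<-s-t\}$ is open (by upper semicontinuity of $\phi$), hence Borel.
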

\begin{proof}
Let $u\in \QPSH(M,\Omega)$ be such that $0\leq u \leq 1$. Set $v:=tu-s-t\in \mathcal{E}^1(M,\Omega)$. We have
\[
\begin{split}
t^n\int_{\{\phi<-s-t\}}(\Omega + \partial \partial_J u)^n&\leq \int_{\{\phi<-s-t\}}\left(\Omega+\partial \partial_J (tu)\right)^n=
\int_{\{\phi<-s-t\}}(\Omega+ \partial \partial_J v)^n\,.
\end{split}
\]
Observe that we have the inclusions
\[
\{\phi<-s-t\} \subseteq \left\{ \phi < v \right\} \subseteq \{\phi <-s\}
\]
and thus
\[
t^n\int_{\{\phi<-s-t\}}(\Omega + \partial \partial_J u)^n\leq \int_{\{\phi<v\}}(\Omega+ \partial \partial_J v)^n\,.
\]
Furthermore, applying the comparison principle of Proposition \ref{Comparison} we deduce
\[
\int_{\{\phi<v\}}(\Omega+ \partial \partial_J v)^n\leq \int_{\{\phi<v\}}(\Omega+ \partial \partial_J \phi)^n\leq \int_{\{\phi<-s\}}(\Omega+ \partial \partial_J \phi)^n\,,
\]
as claimed.
\end{proof}

Before we prove boundedness of solutions to the quaternionic Monge--Ampère equation we recall the statement of a general lemma due to de Giorgi, later generalised by Ko\l odziej \cite{K98} (see also {\cite[Lem. 2.4]{EGZ}}).

\begin{lem}[de Giorgi]\label{Lem:EGZ}
Let $f \colon \R^+ \to \R^+$ be a decreasing right-continuous function such that $\lim_{s\to +\infty}f(s)=0$. Suppose also that there exist $\alpha>0$ and $B>0$ such that
\begin{equation}\label{eq_boundedness}
tf(s+t)\leq B f(s)^{1+\alpha}\,, \qquad \text{for all }s>0\,, \,\, 0\leq t\leq 1\,.
\end{equation}
Then there exists $S>0$, depending only on $\alpha, B$ and the minimum $s_0$ such that $f(s_0)^\alpha<(2B)^{-1}$, which satisfies $f(s)=0$ for all $s\geq S$. Furthermore we have
\begin{equation}\label{eqn:S}
S\leq s_0+\frac{2B}{1-2^{-\alpha}}f(s_0)^\alpha\, .
\end{equation}
\end{lem}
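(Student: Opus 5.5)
The plan is the classical iteration argument of De Giorgi, in the form used by Ko\l odziej. We construct a sequence $s_j$ along which $f(s_j)$ decreases geometrically, and we show that the increments $s_{j+1}-s_j$ are summable. Fix $s_0$ as in the statement, so that $f(s_0)^\alpha<(2B)^{-1}$; such an $s_0$ exists because $f(s)\to 0$. The goal is to prove that $f(s_0+\sum_j t_j)=0$ for a suitable summable sequence $t_j\in[0,1]$.

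Define inductively
\[
s_{j+1}:=\sup\{s>s_j \mid f(s)>f(s_j)/2\}.
\]
If $f(s_j)=0$ we stop, since $f$ is decreasing and so vanishes beyond $s_j$. We first check that $s_{j+1}$ is finite, which holds because $f\to 0$. Right-continuity and monotonicity then give
\[
f(s_{j+1})\le f(s_j)/2 \quad\text{and}\quad f(s)>f(s_j)/2 \text{ for } s<s_{j+1}.
\]
Hence $f(s_j)\le 2^{-j}f(s_0)$. Put $t_j:=s_{j+1}-s_j$.

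To bound $t_j$, we first argue that $t_j\le 1$. If $t_j>1$, apply \eqref{eq_boundedness} with $s=s_j$, $t=1$ and use $f(s_j+1)>f(s_j)/2$. This gives $f(s_j)/2<Bf(s_j)^{1+\alpha}$, that is, $f(s_j)^\alpha>(2B)^{-1}$. This contradicts $f(s_j)\le f(s_0)$ and the choice of $s_0$.

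Now take $t<t_j$, so that $t\le 1$. Applying \eqref{eq_boundedness} gives
\[
t\,f(s_j)/2< t f(s_j+t)\le Bf(s_j)^{1+\alpha}.
\]
Letting $t\to t_j$ yields $t_j\le 2Bf(s_j)^\alpha\le 2B\,2^{-j\alpha}f(s_0)^\alpha$. Summing the geometric series gives
\[
s_\infty:=\lim_j s_j\le s_0+\frac{2B}{1-2^{-\alpha}}f(s_0)^\alpha.
\]
Since $f(s_\infty)\le f(s_j)\to 0$, monotonicity gives $f(s)=0$ for all $s\ge S:=s_\infty$. This is \eqref{eqn:S}.

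The only delicate points are the bookkeeping with right-continuity in the definition of $s_{j+1}$, and the case distinction ensuring $t\le 1$ so that the hypothesis applies. The key step is the bound $t_j\le 2Bf(s_j)^\alpha$, which converts the halving of $f$ into summable increments.
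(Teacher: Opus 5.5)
Your proof is correct and is essentially the standard De Giorgi--Ko\l odziej iteration of \cite[Lem. 2.4]{EGZ}, which the paper only cites and does not reprove. The steps match that argument: the sequence $s_{j+1}=\sup\{s>s_j \mid f(s)>f(s_j)/2\}$, right-continuity to get $f(s_{j+1})\le f(s_j)/2$, and the increment bound $s_{j+1}-s_j\le 2Bf(s_j)^\alpha$ summed as a geometric series, which gives exactly \eqref{eqn:S}. One cosmetic point: the hypothesis \eqref{eq_boundedness} is stated only for $s>0$, so if $s_0=0$ you should note that it extends to $s=0$ by right-continuity. Indeed, $tf(s+t)\le Bf(s)^{1+\alpha}\le Bf(0)^{1+\alpha}$, and letting $s\searrow 0$ gives $tf(t)\le Bf(0)^{1+\alpha}$.
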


We are ready to prove the aforementioned boundedness result.

\begin{proof}[Proof of Theorem $\ref{Thm:boundedness}$]
Suppose without loss of generality that $\sup \phi =0$. Consider the function
\[
f(s):= {\rm Cap}_\Omega(\phi<-s)^{1/n}\,.
\]
We will observe that $f$ satisfies all the assumptions of Lemma \ref{Lem:EGZ}. Clearly $f$ is right-continuous and decreasing. Moreover,
$\lim_{s\to +\infty}f(s)=0$ as by Lemma \ref{Lem_CapMA}
$$
\limsup_{s\to +\infty} f(s)^n\leq \lim_{s\to +\infty}\int_{\{\varphi<-s+1\}}\MA_\varphi=\int_{\{\varphi=-\infty\}}\MA_\varphi=0
$$
because the Monge--Ampère operator puts no mass on Q-polar sets by Proposition \ref{prop:NonPluripolar}. It then remains to prove \eqref{eq_boundedness}. 
By applying again Lemma \ref{Lem_CapMA} and using that $\phi$ solves the quaternionic Monge--Ampère equation we have
\[
t^nf(s+t)^n\leq \int_{\{\phi<-s\}}(\Omega+ \partial \partial_J \phi)^n=\int_{\{\phi<-s\}}\mathrm{e}^F\Omega^n\leq (\sup \mathrm{e}^F) \int_{\{\phi<-s\}}\Omega^n\,.
\]
Finally, applying Lemma \ref{Lem:volcap}, for any $\alpha \in (0,1)$ we have
\[
\int_{\{\phi<-s\}} \Omega^n \leq C \cdot {\rm Cap}_\Omega(\phi<-s)^{1+\alpha}=Cf(s)^{n(1+\alpha)}\,.
\]
Therefore \eqref{eq_boundedness} is satisfied.

We can now apply Lemma \ref{Lem:EGZ} and conclude that there exists $S=S(\alpha,B)>0$, such that $f(S)=0$. As a consequence the set $\{\phi <-S\}$ has zero measure, thanks to Lemma \ref{Lem:volcap}. By upper semicontinuity of $\phi$, this directly implies that $\phi$ must be bounded. 
\end{proof}

\section{Continuity of solutions}\label{Sec 7}

The aim of this section is to prove the following 

\begin{thm}\label{Thm:continuity} 
Let $(M^n,I,J,K,\Omega)$ be a compact locally flat HKT manifold such that $K_I(M)$ is holomorphically trivial. Then any  solution $\varphi \in  \mathcal{E}^1(M,\Omega)$ to \eqref{eq_QMA} is continuous.  
\end{thm}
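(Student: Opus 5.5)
We would follow the strategy of Ko\l odziej \cite{K98} as adapted by Cho--Choi \cite{Cho-Choi}: approximate $\phi$ from above by continuous (indeed smooth) $\Omega$-quaternionic psh functions and show, via a capacity/stability estimate, that the approximation is uniform. By Theorem \ref{Thm:boundedness} we already know $\phi\in\QPSH(M,\Omega)\cap L^\infty(M)$. We normalise $\sup_M\phi=0$. The first step is a regularisation. Using local flatness, in each chart $U_j$ with $\Omega=\partial\partial_J u_j$ we convolve $u_j+\phi$ (Proposition \ref{prop_convolution}) and glue with the partition-of-unity/approximation procedure behind Proposition \ref{Prop:approx}. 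The goal is smooth $\phi_\epsilon\in\QPSH(M,\Omega_\epsilon)$ decreasing to $\phi$, where $\Omega_\epsilon$ differs from $\Omega$ by an error that can be absorbed. Since the manifold is locally flat, we hope transition maps being affine make convolution errors small. A small multiple $(1-\delta_\epsilon)$ rescaling, with $\delta_\epsilon\to 0$, then yields honest elements $\tilde\phi_\epsilon\in\QPSH(M,\Omega)\cap C^\infty(M)$ decreasing to $\phi$ up to $o(1)$ constants.

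The second step is a stability estimate in the spirit of \cite[Thm.~2.1]{EGZ}/Ko\l odziej. For $\psi\in\QPSH(M,\Omega)\cap L^\infty(M)$, $\phi$ solving $\MA_\phi=\mathrm e^F\Omega^n$ with density bounded by $\sup \mathrm e^F$, we aim to prove
\[
\sup_M(\psi-\phi)\leq s+C\,\mathrm{Cap}_\Omega(\{\phi<\psi-s\})^{1/n}\quad\text{for } s>0.
\]
We would prove this by running the de Giorgi argument of Lemma \ref{Lem:EGZ} on $f(s):=\mathrm{Cap}_\Omega(\{\phi-\psi<-s\})^{1/n}$. The analogue of Lemma \ref{Lem_CapMA} for $\{\phi<\psi-s-t\}$ comes from the comparison principle (Proposition \ref{Prop:cps_principle}) applied to $\phi$ and $(1-t)\psi+tu-s-t$ with $0\le u\le 1$. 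Combined with Lemma \ref{Lem:volcap}, this gives $tf(s+t)\le Bf(s)^{1+\alpha}$. The estimate \eqref{eqn:S} then bounds $\sup(\psi-\phi)$ by $s_0+Cf(s_0)^\alpha$. Finally, $\mathrm{Cap}_\Omega(\{\phi<\psi-s\})\le s^{-1}$ times a volume/$L^1$ control, through Lemma \ref{Lem_CapMA}-type inequalities and the mass bound $\int_{\{\phi<\psi-s\}}\mathrm{e}^F\Omega^n\le C s^{-1}\|\psi-\phi\|_{L^1}$.

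The third step applies this with $\psi=\tilde\phi_\epsilon$. Since $\tilde\phi_\epsilon\searrow\phi$ pointwise, dominated convergence gives $\|\tilde\phi_\epsilon-\phi\|_{L^1}\to0$. The stability estimate then gives $\sup_M(\tilde\phi_\epsilon-\phi)\to0$, i.e.\ $\tilde\phi_\epsilon\to\phi$ uniformly. A uniform limit of continuous functions is continuous, which proves the claim.

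The main obstacle is the first step. On a non-K\"ahler, merely locally flat HKT manifold, gluing local convolutions into global $\Omega$-quaternionic psh functions with controlled loss of positivity is delicate: $\Omega$ is only locally $\partial\partial_J$-exact and the local potentials $u_j$ differ between charts. I would try first to use the affine transition functions to convolve in each chart and take a ``max-gluing'' (Demailly/Richberg style) $\max_j(\rho_\epsilon * (u_j+\phi)-u_j+\eta_j)$ with cutoff corrections $\eta_j$. This is continuous but only approximately $\Omega$-psh. If that fails, I would fall back on the Cho--Choi approach, which only needs local continuity: Ko\l odziej's local argument on each ball, using the Dirichlet problem solvability \cite[Lem.~3.2]{Wan-Kang} and the local convolution there.
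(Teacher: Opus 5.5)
\textbf{There is a genuine gap: Step 1, on which everything rests, is not available.}

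Your Steps 2 and 3 are the standard stability mechanism of \cite{K98,EGZ}. They would go through globally with the tools of Section~\ref{Sec 6}, up to minor fixes. First, the inclusions $\{\phi<\psi-s-t\}\subseteq\{\phi<(1-t)\psi+tu-s-t\}\subseteq\{\phi<\psi-s\}$ only hold after shifting by a multiple of $t\|\psi\|_{L^\infty}$. Second, \eqref{eqn:S} yields the exponent $\alpha/n$, not $1/n$.

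Step 1 asks for smooth $\tilde\phi_\epsilon\in\QPSH(M,\Omega)$ decreasing to $\phi$ up to $o(1)$, i.e.\ a global regularisation whose loss of positivity tends to $0$. The only global regularisation at hand, Proposition~\ref{Prop:approx}, gives $\phi_\epsilon\in\QPSH(M,C\Omega)$ with a \emph{fixed} $C$. That loss comes from $\partial\partial_J$ of the partition of unity and does not shrink with $\epsilon$, so no $(1-\delta_\epsilon)$-rescaling with $\delta_\epsilon\to0$ can absorb it. The paper explicitly notes that a loss-free analogue of \cite{Blocki-Kolodziej} is not known in the quaternionic setting.

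Nor can you run Steps 2--3 with $C\Omega$ as reference form. That would require $(C\Omega+\partial\partial_J\phi)^n$ to have bounded density, i.e.\ control of the mixed measures $\Omega_\phi^k\wedge\Omega^{n-k}$, which is second-order control of a weak solution.

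Local flatness does not rescue the max-gluing either. Since $u_j-u_k$ is $\partial\partial_J$-closed, hence harmonic in flat coordinates, on an overlap the two local regularisations differ, up to $O(\epsilon^2)$, by two different mollifications of $\phi$. The two mollifiers are related by the linear part of the affine transition. For a discontinuous $\phi$ there is no reason for this mismatch to tend to $0$ uniformly. Richberg-type gluing needs $\phi$ continuous, which is exactly what you are trying to prove.

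\textbf{The fallback points the right way but is not yet an argument.} The paper's proof is indeed Ko\l odziej's local argument. However, the naive localisation of your Steps 2--3 fails: on a ball, the mollifications $\psi_j$ of $\psi=u+\tilde\phi$ satisfy $\psi_j\geq\psi$. So the boundary hypothesis of the local stability estimate (Proposition~\ref{Prop:supCap}) fails for the pair $(w,\psi_j)=(\psi,\psi_j)$. The missing idea is how to manufacture that boundary inequality.

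The paper argues by contradiction as follows.
\begin{enumerate}
\item Assume $d:=\sup_M(\phi-\phi_*)>0$, and pick $x_0$ realising $d$ with $\phi(x_0)$ minimal.
\item Use Lemma~\ref{Lem:potential} to obtain a local potential $u$ of $\Omega$ with a strict minimum at $x_0$, by subtracting a $\partial\partial_J$-closed quadratic polynomial.
\item Compare $\psi_j$ with $w:=t\psi+d-a_0$ for suitably chosen $t>1$ and $a_0\in(0,d)$. The strict minimum together with Hartogs' lemma (Lemma~\ref{Cor:Hartogs}) gives $\psi_j<w$ near $\partial\mathbb{B}'$. Meanwhile $w_*(0)+a_1<\psi_j(0)$ forces $\sup_{\mathbb{B}'}(\psi_j-w)>a_1/2$.
\item Since $(\partial\partial_J w)^n=t^n\mathrm{e}^F\Omega^n$ has bounded density, Proposition~\ref{Prop:supCap}, with constants uniform in $j$, bounds $\mathrm{Cap^{WK}}(W(j,a_1/4),\mathbb{B}')$ from below.
\item This contradicts the convergence in capacity of $\psi_j\searrow\psi$ \cite[Lem. 3.2]{Wan-Zhang}.
\end{enumerate}
No global regularisation and no Dirichlet problem is needed.
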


To prove Theorem \ref{Thm:continuity} we strictly follow the argument in \cite[Sec. 2.4]{K98} (see also \cite[Thm. 1.3]{Cho-Choi}).

\medskip 
We need the following preliminary

\begin{lem}\label{Lem:potential}
Let $(M,I,J,K,\Omega)$ be a locally flat HKT manifold and $x_0 \in M$. There exist open balls $\mathbb{B}'\subseteq \mathbb{B}$ about $0$ in $\mathbb H^n$, a local chart $\eta$ defined on a neighbourhood $U$ of $x_0$ onto $\mathbb{B}$ such that $\eta(x_0)=0$, and a function $u\in C^\infty(\mathbb{B})$ satisfying $\eta^* \partial \partial_J u=\Omega\vert_U$ and $u(x)-u(x_0)>0$ for all $x\in \mathbb{B}'\setminus \{x_0\}$. 
\end{lem}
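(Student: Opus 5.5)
Start from local flatness: pick a quaternionic affine chart $\eta$ from a neighbourhood $U$ of $x_0$ onto a ball $\mathbb{B}\subseteq \H^n$ centred at $0$, with $\eta(x_0)=0$. By the existence of local potentials (\cite{Alesker-Verbitsky (2006),Banos}), after shrinking $\mathbb{B}$ there is a smooth $u_0$ on $\mathbb{B}$ with $\eta^*\partial\partial_J u_0=\Omega\vert_U$. The task is to modify $u_0$ by a pluriharmonic-type term, i.e. a function $h$ with $\partial\partial_J h=0$, so that $u:=u_0+h$ has a strict local minimum at $0$.

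The key steps are as follows. First, replace $u_0$ by $u_0-u_0(0)-\ell$, where $\ell$ is the real-linear part of the Taylor expansion of $u_0$ at $0$. Every real affine function $\ell$ on $\H^n=\R^{4n}$ satisfies $\partial\partial_J\ell=0$, since $\partial\partial_J$ is, in the flat coordinates and up to the constant of Sroka's remark, a second order operator with constant coefficients built from the quaternionic Hessian, and it kills affine functions. So the new $u$ still satisfies $\partial\partial_J u=\Omega$, with $u(0)=0$ and $du(0)=0$. Second, consider the real Hessian $D^2u(0)$. Its quaternionic part, i.e. the average over the unit quaternions $q\mapsto D^2u(0)(vq,vq)$, is positive definite because $\Omega$ is q-positive. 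However, $D^2u(0)$ itself need not be positive definite. Third, correct this by subtracting a real quadratic form $Q$ with $\partial\partial_J Q=0$. The quadratic forms annihilated by the quaternionic Hessian are exactly those whose restriction to every right quaternionic line $\{vq\}$ is harmonic in $q\in\H\cong\R^4$. Split the space of real quadratic forms as the hyperhermitian ones $\oplus$ the kernel, and let $Q$ be the kernel component of $D^2u(0)$. Then $D^2(u-Q)(0)$ is the hyperhermitian part, which on each quaternionic line is a positive multiple of $|q|^2$ and is therefore positive definite. Finally, Taylor's theorem gives $u(x)-u(0)\ge c|x|^2-C|x|^3>0$ on a small punctured ball $\mathbb{B}'$, which is the claim with $u(x_0)$ meaning $u(\eta(x_0))$.

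I expect the main obstacle to be the linear-algebra step: showing that real quadratic forms on $\H^n$ decompose as hyperhermitian forms (those of the form $\Re\,\bar x^tAx$ with $A$ quaternionic hermitian) plus the kernel of $\partial\partial_J$, and that the hyperhermitian projection of $D^2u(0)$ is exactly the positive form determined by $\Omega(0)$. To settle this, I would work with Alesker's quaternionic Hessian $\left(\frac{\partial^2 u}{\partial\bar q_i\partial q_j}\right)$. For a quadratic form $x\mapsto\Re\,\bar x^tAx$ this Hessian returns $A$ up to a constant, which gives both the surjectivity onto hyperhermitian matrices and the splitting via the kernel. Positivity of $\Omega(0)$ then means $A>0$, so the form is bounded below by $c|x|^2$.
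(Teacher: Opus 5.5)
Your proposal is correct and is essentially the paper's argument. The paper also subtracts from a local potential $v$ the real part of an explicit polynomial, all of whose pieces are killed by $\partial\partial_J$: the linear Taylor terms, the $(2,0)+(0,2)$ quadratic terms, and the $J$-anti-invariant part of the complex Hessian at $0$. What is left is the hyperhermitian part $\mathcal{L}^{\H}(v)$, which is positive by strict quaternionic plurisubharmonicity, and Taylor's theorem then gives the strict minimum. The only difference is presentation: the paper writes this out in complex coordinates for $I$, while you phrase the same kernel/hyperhermitian splitting invariantly, via averaging over unit quaternions or via Alesker's quaternionic Hessian.
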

\begin{proof}
    Let $\eta=(z_1,\dots,z_{2n})$ be a coordinate chart defined on a neighbourhood $U$ of $x_0$ onto $\mathbb{B}$ such that $\eta(x_0)=0$ and let $v\in C^\infty(\mathbb{B})$ be a strictly quaternionic psh function such that $\eta^* \partial \partial_J v= \Omega\vert_U$. 
    
    Let $h\colon \mathbb{B} \to \R$ be the real part of the function
    \[
    f(z):=-2 \sum_{k=1}^{2n} z_k \frac{\partial v}{\partial z_k}(0)-\sum_{j,k=1}^{2n} z_jz_k \frac{\partial^2 v}{\partial z_j \partial z_k}(0) -\frac{1}{2}\sum_{j,k=1}^{2n} z_j\bar z_k \left( \frac{\partial^2 v}{\partial z_j \partial \bar z_k}(0)- \sum_{p,q=1}^{2n}J_j^{\bar q}J_{\bar k}^p\frac{\partial^2 v}{\partial z_p \partial \bar z_q}(0) \right).
    \]
    We compute
    \[
    \begin{split}
    \partial \partial_J f&=-\frac{1}{2}\partial \partial_J \sum_{j,k=1}^{2n} z_j\bar z_k \left( \frac{\partial^2 v}{\partial z_j \partial \bar z_k}(0)-\sum_{p,q=1}^{2n} J_j^{\bar q}J_{\bar k}^p\frac{\partial^2 v}{\partial z_p \partial \bar z_q}(0) \right)\\
    &=-\frac{1}{2}\partial J^{-1} \sum_{j,k=1}^{2n} z_j\bar z_k \left( \frac{\partial^2 v}{\partial z_j \partial \bar z_k }(0)- \sum_{p,q=1}^{2n}J_j^{\bar q}J_{\bar k}^p\frac{\partial^2 v}{\partial z_p \partial \bar z_q}(0) \right) d\bar z_k\\
    &=\frac{1}{2}\partial \sum_{j,k,l=1}^{2n} z_j \left( \frac{\partial^2 v}{\partial z_j \partial \bar z_k}(0)- \sum_{p,q=1}^{2n}J_j^{\bar q}J_{\bar k}^p\frac{\partial^2 v}{\partial z_p \partial \bar z_q}(0) \right) J^{\bar k}_{l} d z_l\\
    &= \frac{1}{2}\sum_{j,k,l=1}^{2n}  \left( \frac{\partial^2 v}{\partial z_j \partial \bar z_k}(0)- \sum_{p,q=1}^{2n} J_j^{\bar q}J_{\bar k}^p\frac{\partial^2 v}{\partial z_p \partial \bar z_q}(0) \right) J^{\bar k}_{l} dz_j\wedge d z_l\\
    &= \frac{1}{2}\sum_{j,k,l=1}^{2n} J^{\bar k}_{l}\frac{\partial^2 v}{\partial z_j \partial \bar z_k}(0)dz_j\wedge d z_l+\frac{1}{2} \sum_{j,q,l=1}^{2n}  J_j^{\bar q}\frac{\partial^2 v}{\partial z_l \partial \bar z_q}(0) dz_j\wedge d z_l=0\,.
    \end{split}
    \]
    Similarly, $\partial \partial_J \bar f=0$ and so $h$ is $\partial \partial_J$-closed. Define $ u:=v +h$. It follows that 
    \[
    \eta^*\partial \partial_J u=\eta^* \partial \partial_J v= \Omega\vert_U\,.
    \]
    It remains to show that $0$ is a strict local minimum for $u$ in $\mathbb{B}'$. For any $z=(z_1,\dots,z_{2n})\in \mathbb{B}$ we have
    \[
    \begin{split}
    u(z)- u(0)&=\sum_{k=1}^{2n}z_k \frac{\partial  u}{\partial z_k}(0)+\sum_{k=1}^{2n}\bar z_k \frac{\partial u}{\partial \bar z_k}(0)+\frac{1}{2}\sum_{j,k=1}^{2n}z_jz_k \frac{\partial^2  u}{\partial z_j \partial z_k}(0)\\
    &\quad +\sum_{j,k=1}^{2n}z_j\bar z_k \frac{\partial^2 u}{\partial z_j \partial \bar z_k}(0)+\frac{1}{2}\sum_{j,k=1}^{2n}\bar z_j\bar z_k \frac{\partial^2  u}{\partial \bar z_j \partial \bar z_k}(0)+ o(\|z\|^2)\\
    &=\sum_{j,k=1}^{2n}z_j\bar z_k \frac{\partial^2  u}{\partial z_j \partial \bar z_k}(0)+ o(\|z\|^2)\\
    &=\sum_{j,k=1}^{2n}z_j\bar z_k \left( \frac{\partial^2 v}{\partial z_j \partial \bar z_k}(0)- \frac{1}{2}\frac{\partial^2 v}{\partial z_j \partial \bar z_k}(0)+\frac{1}{2} \sum_{p,q=1}^{2n}J_j^{\bar q}J_{\bar k}^p\frac{\partial^2 v}{\partial z_p \partial \bar z_q}(0) \right)+ o(\|z\|^2) \\
    &=\frac{1}{2}\sum_{j,k=1}^{2n}z_j\bar z_k \left( \frac{\partial^2 v}{\partial z_j \partial \bar z_k}(0)+\sum_{p,q=1}^{2n}J_j^{\bar q}J_{\bar k}^p\frac{\partial^2 v}{\partial z_p \partial \bar z_q}(0) \right)+ o(\|z\|^2)\,.
    \end{split}
    \]
    Since $v$ is strictly quaternionic psh we have $\partial \partial_J v >0$ and a simple calculation shows that
    \[
\mathcal{L}^{\H}(v):=\frac{1}{2}\left(\frac{\partial^2 v}{\partial z_j \partial \bar z_k} + J_j^{\bar q}J_{\bar k}^p \frac{\partial^2 v}{\partial z_p \partial \bar z_q}\right)dz^j\otimes d\bar z^k
\]
is a positive tensor (see for instance \cite[Eqn. 3.5]{Sroka24}). This concludes the proof.
\end{proof}

\begin{proof}[Proof of Theorem $\ref{Thm:continuity}$]
    We argue by contradiction assuming that $\phi$ is not continuous. Equivalently, $d:= \sup(\phi-\phi_*)>0$, where $\phi_*$ is the lower semicontinuous regularisation of $\phi$, i.e.
    \begin{equation}\label{eqn:lsc_reg}
    \phi_*(x):=\sup_{\stackrel{U \text{ open}}{x\in U\subseteq M} } \inf_U \phi\,, \qquad x\in M\,.
    \end{equation}
    Thanks to Theorem \ref{Thm:boundedness} we know that $\phi-\phi_*$ is a bounded upper semicontinuous function. Therefore, by compactness, $\phi-\phi_*$ attains its supremum at some point $x_0\in M$. By definition of $d$ we have that $\phi_*$ attains its minimum on the compact set $\{x\in M \mid \phi(x)-\phi_*(x)=d\}$. Thus $x_0$ may be chosen so that
    \begin{equation}\label{eqn:x_0}
\phi(x_0)=\min_{\{\phi(x)-\phi_*(x)=d\}} \phi\,.
\end{equation}

    By Lemma \ref{Lem:potential} there exist a local chart $\eta \colon U \to \mathbb{B}:=\mathbb{B}_1(0)$ centred at $x_0$, an open ball $\mathbb{B}'\subseteq \mathbb{B}$ and a potential function $u\in C^\infty(\mathbb{B})$ for the HKT form $\Omega$ such that
    \[
    b:=\inf_{\partial \mathbb{B}'} u-u(0)>0\,.
    \]
    Set $\tilde \phi:=\phi \circ \eta^{-1}$. Now, for any $a\in [0,d]$ we define
    \[
    E(a):=\{ x \in \bar{\mathbb{B}}' \mid \tilde \phi(x)-\tilde \phi_*(x)\geq d-a \}\,, \qquad c(a):=\tilde \phi(0)-\inf_{E(a)}\tilde \phi \geq 0\,.
    \]
    We now show that $\lim_{j\to +\infty}c(1/j)=0$.
    Note that the sets $E(a)$ are compact, hence, by lower semicontinuity $\tilde \phi_*$ achieves its infimum over $E(1/j)$ at some $y_j\in E(1/j)$ for all $j\geq 1$. Denote by $y\in M$ the accumulation point of $\{y_j\}$. We must have $y\in E(0)$, indeed
    \[
    \tilde \phi(y)\geq \limsup_{j\to +\infty} \tilde \phi(y_j) \geq \liminf_{j\to +\infty} \tilde \phi(y_j) \geq \liminf_{j\to +\infty} \left(\tilde  \phi_*(y_j)+d-1/j \right) \geq \tilde \phi_*(y)+d\,.
    \]
    Additionally, \eqref{eqn:x_0} tells us that $\tilde \phi_*(y)=\tilde \phi(y)-d\geq \tilde \phi(0)-d= \tilde  \phi_*(0)$ and so
    \[
    \begin{split}
    0\leq \limsup_{j\to +\infty} c(1/j)&=\tilde \phi(0)-\liminf_{j\to +\infty} \inf_{E(1/j)} \tilde \phi \leq \tilde \phi(0)-\liminf_{j\to +\infty }\inf_{E(1/j)} \tilde \phi_*-d\\
    &=\tilde \phi_*(0)-\liminf_{j\to +\infty }\tilde \phi_*(y_j)\leq \tilde \phi_*(0)-\tilde \phi_*(y)\leq 0\,.
    \end{split}
    \]
    Now, since $c(a)$ is non-decreasing there exists $a_0\in (0,d)$ such that
    \begin{equation}\label{eqn:c(a_0)}
    c(a_0)< \frac{1}{3}b\,.
    \end{equation}
    Set $\psi:=u+\tilde \phi$. Since $\phi$ is bounded, we may add a constant to $\psi$ in order to guarantee $\psi>0$ on $\mathbb{B}$ as well as $\psi(0)>d$. Fix $t>1$ such that
    \begin{equation}\label{eqn:a_0}
    t\psi_*(0)+d-\psi(0)=(t-1)(\psi(0)-d)<a_0<(t-1)\left(\psi(0)-d+\frac{2}{3}b\right)\,.
    \end{equation}
    By Proposition \ref{prop_convolution}, up to shrinking $\mathbb{B}$, there exists a sequence $(\psi_j)_{j\in \N} \subseteq \QPSH(\mathbb{B})\cap C^\infty(\mathbb{B}) $ that decreases to $\psi$. Set $w:=t\psi+d-a_0$. By the first inequality of \eqref{eqn:a_0}, there exists another constant $a_1>0$ such that
    \[
    w_*(0)+a_1=t\psi_*(0)+d-a_0+a_1<\psi(0)\leq \psi_j(0)\,, \qquad \text{for all } j\geq 1\,.
    \]
    Hence, by definition of lower semicontinuous regularisation, it follows that the set
    \[
    W(j,c):=\{x\in \bar{\mathbb{B}}' \mid \psi_j(x)-w(x)>c\}
    \]
    is non-empty for all $c\in (0,a_1) $. As a consequence we have
    \begin{equation}\label{eqn:supcontrol}
    \sup_{\mathbb{B}'}(\psi_j-w)>\frac{1}{2}a_1\,, \qquad \text{for all }j\geq 1\,.
    \end{equation}
    On the other hand, there exists an open neighbourhood $V$ of $\partial \mathbb{B}'$ and an index $j_0$ such that
    \begin{equation}
        \label{eqn:nhoodboundary}
    \psi_j-w<0\,,
    \end{equation}
    on $V$ for all $j\geq j_0$. To prove this, choose a point $z\in \partial \mathbb{B}'\cap E(a_0)$. From \eqref{eqn:c(a_0)} we deduce
    \[
    \psi_*(z)\geq \inf_{\partial \mathbb{B}'} u+ \tilde \phi_*(z)\geq u(0)+b+\tilde \phi(z)-d\geq  u(0)+b+\tilde \phi(0)-c(a_0)-d \geq \psi(0)-d+\frac{2}{3}b\,.
    \]
    Then, from the second inequality in \eqref{eqn:a_0} we have $(t-1)\psi_*(z) >a_0$ and consequently
    \[
    \psi(z)\leq \psi_*(z)+d<t\psi_*(z)+d-a_0=w_*(z)\,.
    \]
    Lemma \ref{Cor:Hartogs} now implies the existence of an open neighbourhood $V_1$ of $\partial\mathbb{B}' \cap E(a_0)$ and an index $j_1$ such that \eqref{eqn:nhoodboundary} holds on $V_1$ for all $j\geq j_1$. Furthermore, by construction on $\partial \mathbb{B}'\setminus( \partial \mathbb{B}'\cap V_1)$
    \[
    \psi-t\psi_*\leq \psi-\psi_*<d-a_0.
    \]
    In particular, another application of Lemma \ref{Cor:Hartogs}, this time on $\partial \mathbb{B}'\setminus( \partial \mathbb{B}'\cap V_1)$ implies that there exists an open neighbourhood $V_2$ of $\partial \mathbb{B}'\setminus( \partial \mathbb{B}'\cap V_1) $ over which \eqref{eqn:nhoodboundary} holds for all $j\geq j_2$ for some index $j_2$. Setting $V:=V_1\cup V_2$ and $j_0:=\max(j_1,j_2)$ we have that $\eqref{eqn:nhoodboundary}$ holds on $V$ for all $j\geq j_0$, as claimed.
    This implies that there exists a smaller ball $\mathbb{B}''\Subset \mathbb{B}'$ such that $  W(j,c) \subset \bar{\mathbb{B}}'' $ whenever $j\geq j_0$.
    
    Now, since $\phi$ solves \eqref{eq_QMA}, and $\psi_j-w<0$ on $V$ for all $j\geq j_0$, we see that the functions $w$ and $\psi_j$ satisfy the hypotheses of Proposition \ref{Prop:supCap} on $\mathbb{B}'$. We deduce that there exist constants $C_j$ such that
    \begin{equation}\label{eqn:AI_will_destroy_us}
        \sup_{\mathbb{B}'}(\psi_j-w)\leq \frac{a_1}{4} + C_j\cdot \mathrm{Cap^{WK}}\bigl(W(j,a_1/4),\mathbb{B}'\bigr)^{\alpha/n}.
    \end{equation}
    We then follow the proof of Proposition \ref{Prop:supCap} to show that the constants $C_j$ are uniform in $j\in\N$. Indeed, as $(\psi_j)_{j\in \N}$ is decreasing, the family of functions $f_j(s):=\mathrm{Cap^{WK}}\bigl(W(j,s+a_1/4),\mathbb{B}'\bigr)^{1/n}$ is decreasing in $j$. Thus, for any $\alpha, B$ such that \eqref{eq_boundedness} holds for $f_j$ for any $j\in\N$, we get a uniform $S>0$ such that $\sup_{\mathbb{B}'}(\psi_j-w)\leq S+\frac{a_1}{4}$. Therefore, since the constants $B, \alpha$ appearing in the proof of Proposition \ref{Prop:supCap} do not depend on $j$, we conclude that the $C_j$'s can be taken to be uniform in $j$. 

    Hence, recalling \eqref{eqn:supcontrol}, from \eqref{eqn:AI_will_destroy_us} we deduce that
    \begin{equation}\label{eq:capbound}
    C\cdot \mathrm{Cap^{WK}}\bigl(W(j,a_1/4),\mathbb{B}'\bigr)^{\alpha/n}>\frac{1}{4}a_1>0\,, \qquad \text{for all }j\geq j_0\,.
    \end{equation}
    On the other hand, since
    \[
    W(j,a_1/4)\subset \left\{ x\in \bar{\mathbb{B}}' \mid \psi_j(x)-\psi(x)-d+a_0> \frac{1}{4}a_1 \right\}\cap \bar{\mathbb{B}}''\,, \qquad \text{for all }j\geq 1\,,
    \]
    using the monotonicity of the capacity, we claim that
    \begin{equation}\label{eq:capconv}
    \lim_{j\to +\infty} \mathrm{Cap ^{WK}}\bigl(W(j,a_1/4),\mathbb{B}'\bigr)=0\,.
    \end{equation}
    To prove this, we set $\epsilon:=d-a_0+\frac{1}{4}a_1>0$ and note that
    \[
    \begin{split}
    \mathrm{Cap^{WK}}\bigl(\bar{\mathbb{B}}'' \cap \{\psi_j-\psi > \epsilon\},\mathbb{B}' \bigr)&= \sup\left\{ \int_{\bar{\mathbb{B}}'' \cap \{\psi_j-\psi > \epsilon\} } (\partial \partial_J h)^n \mid h\in \QPSH(\mathbb{B}'), \, 0\leq h\leq 1 \right\}\\
    & \leq \sup\left\{ \epsilon^{-1} \int_{\bar{\mathbb{B}}''} (\psi_j-\psi)(\partial \partial_J h)^n \mid h \in \QPSH(\mathbb{B}'), \, 0\leq h\leq 1 \right\}\,.
    \end{split}
    \]
    Taking the limit for $j\to +\infty$ we deduce \eqref{eq:capconv}:
    \[
    0\leq \lim_{j\to +\infty} \mathrm{Cap ^{WK}}\bigl(W(j,a_1/4),\mathbb{B}'\bigr)\leq \lim_{j\to +\infty} \mathrm{Cap^{WK}}\bigl(\bar{\mathbb{B}}''\cap \{\psi_j-\psi > \epsilon\},\mathbb{B}' \bigr)=0\,,
    \]
    thanks to \cite[Lem. 3.2]{Wan-Zhang}. The contradiction now follows from \eqref{eq:capbound} and \eqref{eq:capconv}, thus concluding the proof.
    \end{proof}

\section{Uniqueness of solutions}\label{Sec 8}
The purpose of the present section is to prove that solutions to the quaternionic Monge--Amp\`ere equation are unique, namely:
\begin{thm}\label{thm:Uniqueness}
Let $(M^n,I,J,K,\Omega)$ be a compact locally flat HKT manifold such that $K_I(M)$ is holomorphically trivial and let $ \phi, \psi \in \QPSH(M,\Omega)$ be solutions to \eqref{eq_QMA}. Then $\phi=\psi$.
\end{thm}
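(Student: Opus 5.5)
The plan is to reduce to bounded solutions and then run a Calabi/Błocki-type integration by parts argument, using that the uniqueness statement is invariant under the normalisation $\int_M\phi\,\Omega^n=0$.

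\emph{Step 1: reduction to bounded solutions.} Since $\MA_\phi=\mathrm{e}^F\Omega^n$ has total mass $\int_M \mathrm{e}^F\Omega^n=1$, any solution $\phi\in\QPSH(M,\Omega)$ lies in $\mathcal{E}(M,\Omega)$, and likewise for $\psi$. I would then check that the proof of Theorem~\ref{Thm:boundedness} only uses $\phi\in\mathcal{E}(M,\Omega)$, not $\mathcal{E}^1$:
\begin{itemize}
\item Lemma~\ref{Lem_CapMA} only uses the comparison principle Proposition~\ref{Comparison}, which holds in $\mathcal{E}(M,\Omega)$, applied to the bounded competitor $v=tu-s-t$.
\item Proposition~\ref{prop:NonPluripolar} and Lemma~\ref{Lem:volcap} need no finite energy assumption.
\end{itemize}
Hence $\phi,\psi\in L^\infty(M)$, and all mixed Monge--Amp\`ere products of $\phi$ and $\psi$ are the Bedford--Taylor-type currents of Section~\ref{sec 3}.

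\emph{Step 2: the first energy identity.} Set $u:=\phi-\psi$, a difference of bounded $\Omega$-quaternionic psh functions, and $T_k:=\Omega_\phi^k\wedge\Omega_\psi^{n-1-k}$. Since both functions solve the same equation,
\[
0=\int_M u\,(\MA_\phi-\MA_\psi)=\sum_{k=0}^{n-1}\int_M u\,\partial\partial_J u\wedge T_k=-\sum_{k=0}^{n-1}\int_M \partial u\wedge\partial_J u\wedge T_k .
\]
The last equality is the integration by parts justified by Proposition~\ref{Prop:approx} and Theorem~\ref{thm:cont}, as in Lemma~\ref{lem:fakegradient}. Each term is nonnegative by Lemma~\ref{lem:fakegradient}, so every $\int_M\partial u\wedge\partial_J u\wedge T_k$ vanishes. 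In particular this holds for $\Omega_\psi^{n-1}$ and for $\Omega_\phi^{n-1}$.

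\emph{Step 3: Błocki-type induction, the main obstacle.} The goal is to replace the factors $\Omega_\phi,\Omega_\psi$ by $\Omega$ one at a time. I would prove by induction on $j$ that
\[
\int_M\partial u\wedge\partial_J u\wedge\Omega^j\wedge\Omega_\phi^{a}\wedge\Omega_\psi^{b}=0 \qquad\text{for all } a+b=n-1-j .
\]
For the inductive step, write $\Omega=\Omega_\psi-\partial\partial_J\psi$ in one factor. The term with $\Omega_\psi$ vanishes by the induction hypothesis. The remaining term, $\int_M\partial u\wedge\partial_J u\wedge\partial\partial_J\psi\wedge S$, is integrated by parts and then bounded by a Cauchy--Schwarz inequality for the positive pairing $(\alpha,\beta)\mapsto\int_M\alpha\wedge\partial_J\beta\wedge S$ on $\partial$-exact forms. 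This gives a bound by a product of square roots, one factor being an integral already known to vanish.

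I expect this step to be the hardest. The Cauchy--Schwarz inequality and the integration by parts have to be justified for bounded, non-smooth potentials. I would first prove the inequality for smooth approximants from Proposition~\ref{Prop:approx}, where q-positivity of $\partial\alpha\wedge\partial_J\alpha$ gives it pointwise. I would then pass to the limit using Theorem~\ref{thm:cont} together with the bilinear expansion
\[
\partial\phi\wedge\partial_J\psi=\tfrac12\bigl(\partial\partial_J(\phi\psi)-\phi\,\partial\partial_J\psi-\psi\,\partial\partial_J\phi\bigr).
\]
Care is also needed with the $\partial$ versus $\partial_J$ symmetry: here I would use q-reality and the $\partial$-closedness of the currents involved, and that $\int_M\partial T=0$.

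\emph{Step 4: conclusion.} At $j=n-1$ the induction gives $\int_M\partial u\wedge\partial_J u\wedge\Omega^{n-1}=0$. Locally, in a flat chart, $\partial u\wedge\partial_J u\wedge\Omega^{n-1}$ is a positive multiple of $|\nabla u|^2_g\,\Omega^n$. This identification holds in the distributional sense, with $u\in W^{1,2}$ following from the finiteness of this integral for differences of bounded quaternionic psh functions. Hence $\nabla u=0$ weakly, and $u$ is constant almost everywhere. The normalisation $\int_M\phi\,\Omega^n=\int_M\psi\,\Omega^n=0$ forces this constant to be $0$, so $\phi=\psi$ almost everywhere. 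Finally, two $\Omega$-quaternionic psh functions that agree almost everywhere agree everywhere, since their local convolution regularisations coincide (Proposition~\ref{prop_convolution}, as used in the proof of Proposition~\ref{prop:1}). Therefore $\phi=\psi$.
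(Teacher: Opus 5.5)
Your proof is correct. Its skeleton is the paper's: boundedness, the vanishing of $\int_M u\,(\MA_\phi-\MA_\psi)$, integration by parts, and Lemma~\ref{cor:CS}. The core step, however, is organised differently.

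The paper runs B\l ocki's quantitative argument. It sets $f=(\phi-\psi)/2$ and $h=(\phi+\psi)/2$, and normalises $\phi,\psi\le 0$ and $\int_M(-h)\Omega_h^n\ge 1$. For $T=\Omega_h^l\wedge\Omega^{n-2-l}$ it bounds $\int_M\partial f\wedge\partial_J f\wedge\Omega\wedge T$ by a constant times $\bigl(\int_M\partial f\wedge\partial_J f\wedge\Omega_h\wedge T\bigr)^{1/2}\bigl(\int_M(-h)\Omega_h^2\wedge T\bigr)^{1/2}$. Iterating this controls $\int_M\partial f\wedge\partial_J f\wedge\Omega^{n-1}$ by a power of $\int_M\partial f\wedge\partial_J f\wedge\Omega_h^{n-1}$. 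The equation is used only at the very end, to show that this last integral vanishes.

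You use the equation first, so that every mixed term $\int_M\partial u\wedge\partial_J u\wedge\Omega_\phi^k\wedge\Omega_\psi^{n-1-k}$ vanishes. You then trade one factor at a time for $\Omega$. Since one Cauchy--Schwarz factor is exactly zero at each step, you need neither $h$, nor the normalisation of $\int_M(-h)\Omega_h^n$, nor any constants. The paper's route gives an inequality valid for arbitrary suitably normalised bounded pairs, which is the shape of a stability estimate. Yours is shorter but purely qualitative.

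When you write out Step~3, make the Cauchy--Schwarz step explicit. After integrating by parts, the term becomes $\int_M\partial u\wedge\partial_J\psi\wedge(\Omega_\phi-\Omega_\psi)\wedge S$, where $S=\Omega^j\wedge\Omega_\phi^a\wedge\Omega_\psi^b$ has bidegree $(2n-4,0)$. Lemma~\ref{cor:CS} must then be applied separately with the $(2n-2,0)$-currents $\Omega_\phi\wedge S$ and $\Omega_\psi\wedge S$, not with $S$ itself. This is exactly what lets the induction hypothesis kill one factor.

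Your Step~1 check, that Theorem~\ref{Thm:boundedness} only needs $\phi\in\mathcal{E}(M,\Omega)$, is valid but unnecessary. As the paper notes, $\phi\in L^1(\mathrm{e}^F\Omega^n)=L^1(\MA_\phi)$ automatically, so $\phi,\psi\in\mathcal{E}^1(M,\Omega)$ and the theorem applies as stated.
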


Uniqueness in the classical K\"ahler case was established through a series of successive improvements \cite{Calabi,Blocki,Guedj-Zeriahi 2007,Dinew}. Since in our main theorem, the datum $F$ is assumed to be smooth, it is enough to adapt the argument in \cite{Blocki} to the quaternionic case.

Here we need a Cauchy-Schwarz type inequality on locally flat HKT manifolds. If $S$ is a q-closed $(2n-2,0)$-current, we can define the $\partial_J$-closed $(2n-1,0)$-current $\partial_J \psi\wedge S:= \partial_J(\psi S)$ for any $\psi\in \QPSH(M,\Omega)\cap L^\infty(M)$ (cf. section \ref{sec 3}). 
    Then, for any $\varphi\in \QPSH(M, \Omega)\cap L^\infty(M)$, the $(2n,0)$-current $\partial \varphi\wedge \partial_J \psi\wedge S$ can be naturally defined as
    $$
    \partial \varphi \wedge \partial_J \psi \wedge S:= \partial (\varphi \partial_J\psi\wedge S ) - \varphi \partial\left(\partial_J \psi \wedge S\right)=\partial (\varphi \partial_J\psi\wedge S ) - \varphi \partial\partial_J \psi \wedge S\,.
    $$

\begin{lem}\label{cor:CS}
Let $T$ be a q-closed q-positive $(2n-2,0)$-current. If $\phi,\psi$ are linear combinations of bounded $\Omega$-quaternionic psh functions, then the following Cauchy-Schwarz type inequality holds:
    $$
    \left\lvert \int_M \partial \phi\wedge \partial_J \psi \wedge T\right\rvert\leq \left(\int_M \partial \phi \wedge \partial_J \phi  \wedge T \right)^{1/2}\left( \int_M \partial \psi \wedge \partial_J \psi \wedge T \right)^{1/2}\,.
    $$
\end{lem}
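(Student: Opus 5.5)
The plan is the standard argument for a Cauchy--Schwarz inequality: show that the bilinear form
\[
B(\phi,\psi):=\int_M \partial\phi\wedge\partial_J\psi\wedge T
\]
is real, symmetric and positive semidefinite on the real vector space $V$ of linear combinations of bounded $\Omega$-quaternionic psh functions. Once this is known, the inequality follows in the usual way: for $t\in\R$, expand $0\le B(\phi+t\psi,\phi+t\psi)=B(\phi,\phi)+2tB(\phi,\psi)+t^2B(\psi,\psi)$ and take the discriminant. If $B(\psi,\psi)=0$, letting $t\to\pm\infty$ forces $B(\phi,\psi)=0$.

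Positivity is exactly Lemma~\ref{lem:fakegradient}, since $\phi+t\psi\in V$. For this I first have to check that the definition of $\partial\phi\wedge\partial_J\psi\wedge S$ given just before the lemma agrees with the earlier definition
\[
\partial\phi\wedge\partial_J\phi\wedge T=\tfrac12\partial\partial_J(\phi^2)\wedge T-\phi\,\partial\partial_J\phi\wedge T
\]
when $\phi=\psi$. The two expressions agree after integration. Using that $\int_M\partial(\cdot)=0$ (holomorphicity of $\Theta$), both reduce to $-\int_M\phi\,\partial\partial_J\phi\wedge T$, and this is all that is needed. The same computation gives
\[
B(\phi,\psi)=-\int_M\phi\,\partial\partial_J\psi\wedge T,
\]
which is well defined: writing $\psi$ as a combination of bounded $\Omega$-psh functions, $\partial\partial_J\psi\wedge T$ is a difference of positive measures and $\phi$ is bounded Borel. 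This also shows bilinearity.

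The main step is symmetry and reality of $B$, i.e.
\[
\int_M\phi\,\partial\partial_J\psi\wedge T=\int_M\psi\,\partial\partial_J\phi\wedge T .
\]
I would prove it by smoothing, as in the proof of Lemma~\ref{lem:fakegradient}. Proposition~\ref{Prop:approx} gives smooth $\phi_\epsilon,\psi_\epsilon$, combinations of quaternionic psh functions, converging monotonically in the sense needed for Theorem~\ref{thm:cont}, so that $\phi_\epsilon\,\partial\partial_J\psi_\epsilon\wedge T\to\phi\,\partial\partial_J\psi\wedge T$ weakly. For smooth functions the identity is integration by parts twice: $\int\phi\,\partial\partial_J\psi\wedge T=-\int\partial\phi\wedge\partial_J\psi\wedge T$, using q-closedness of $T$ and $\int\partial=0$. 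Then one needs $\partial\phi\wedge\partial_J\psi=\partial\psi\wedge\partial_J\phi$ as $(2,0)$-forms modulo terms that vanish against $T$. This follows from $\partial_J=J^{-1}\bar\partial J$: on functions, $\partial\psi\wedge\partial_J\phi-\partial\phi\wedge\partial_J\psi$ is $\partial\partial_J$ of something, or equivalently the pairing $(\alpha,\beta)\mapsto\alpha\wedge J^{-1}\bar\beta$ is Hermitian-symmetric in the q-real sense. Alternatively, symmetry follows from $\partial\partial_J$ being a symmetric operator with respect to $\int_M\cdot\wedge T$: for smooth $f,g$, $\int f\,\partial\partial_J g\wedge T=\int g\,\partial\partial_J f\wedge T$, since moving $\partial$ and then $\partial_J$ across uses only $\partial T=\partial_J T=0$ and the anticommutation $\partial\partial_J=-\partial_J\partial$. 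I expect that checking this, and the vanishing of $\int_M\partial_J(\cdot)$ (through $J$ applied to $\bar\Theta$ and the q-reality of $T$), is the delicate point. Reality then follows from q-reality of $T$ together with conjugation, $\overline{\partial\partial_J f}=J\,\partial\partial_J f$ for real $f$.

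Finally, pass to the limit $\epsilon\to0$ in the identity and in the quadratic inequality via Theorem~\ref{thm:cont}. Alternatively, and more simply, apply the discriminant argument directly to the smooth approximants, where everything is classical, and take limits at the end.
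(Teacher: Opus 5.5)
Your proposal is correct and follows the paper's route. The paper simply notes that $\Phi(\phi,\psi)=\int_M\partial\phi\wedge\partial_J\psi\wedge T$ is $\R$-bilinear, symmetric and positive semidefinite, with positivity coming from Lemma~\ref{lem:fakegradient}, and leaves implicit the symmetry that you verify by smoothing and integrating by parts. One small correction: the pointwise difference $\partial\psi\wedge\partial_J\phi-\partial\phi\wedge\partial_J\psi$ equals $\partial(\psi\,\partial_J\phi)+\partial_J(\psi\,\partial\phi)$, not a $\partial\partial_J$-exact form, but it still integrates to zero against the q-closed $T$, so your ``symmetric operator'' argument is the one to use.
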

\begin{proof}
It immediately follows from Lemma \ref{lem:fakegradient} as the form $\Phi(\varphi,\psi):=\int_M\partial \varphi\wedge \partial_J \psi \wedge T$ is $\R$-bilinear, symmetric and positive semidefinite.
\end{proof}

\begin{proof}[Proof of Theorem $\ref{thm:Uniqueness}$]
We first observe that $\phi,\psi\in \mathcal{E}^1(M,\Omega)$. Indeed $\phi\in L^1(\MA_\phi)=L^1(\mathrm{e}^F\Omega^n)$ since $\phi$ is $\Omega$-quaternionic psh, and similarly for $\psi$. Moreover, replacing $\varphi, \psi$ by their translations $\varphi-\sup_M \varphi, \psi-\sup_M \psi$, we can assume that $\varphi, \psi\leq 0 $ as it suffices to prove that $\varphi-\psi$ is constant.

Note that, by Theorem \ref{Thm:boundedness}, $\phi,\psi$ are globally bounded. Set $f=(\phi-\psi)/2,$ $ h=(\phi+\psi)/2$. By Proposition \ref{prop:cptness} we have $h\in \mathcal{E}^1(M,\Omega)$, so that we can assume without loss of generality that  $\int_M(-h)\Omega_h^n\geq 1$. Letting $T$ be a q-closed $(2n-4,0)$-current we have
    $$
    \int_M \partial f \wedge \partial_J h \wedge \partial \partial_J f \wedge T=\int_M \partial f \wedge \partial_J f \wedge \partial \partial_J h \wedge T 
    $$
    as both sides coincide with $-\int_M f \partial \partial_J h \wedge \partial \partial_J f \wedge T$. We deduce that 
    \begin{align*}
        \int_M \partial f\wedge \partial_J f\wedge \Omega \wedge T&=\int_M \partial f\wedge \partial_J f\wedge \Omega_h \wedge T- \int_M \partial f\wedge \partial_J f\wedge \partial \partial_J h \wedge T\\
        &=\int_M \partial f\wedge \partial_J f\wedge \Omega_h \wedge T- \int_M \partial f\wedge \partial_J h \wedge \partial\partial_J f  \wedge T\\
        &=\int_M \partial f\wedge \partial_J f\wedge \Omega_h \wedge T- \int_M \partial f\wedge \partial_J h \wedge \left(\frac{\Omega_\phi-\Omega_\psi}{2}\right)  \wedge T\,.
    \end{align*}
    By Lemma \ref{cor:CS}, since $\Omega_\phi\leq 2 \Omega_h$, we have 
    \[
    \begin{split}
    \left\lvert \int_M \partial f\wedge \partial_J h \wedge \Omega_\phi  \wedge T\right\rvert&\leq 2  \left(\int_M \partial f\wedge \partial_J f \wedge \Omega_h  \wedge T \right)^{1/2}\left( \int_M \partial h\wedge \partial_J h \wedge \Omega_h  \wedge T \right)^{1/2}\\
    &\leq 2  \left(\int_M \partial f\wedge \partial_J f \wedge \Omega_h  \wedge T \right)^{1/2}\left( \int_M (-h)\Omega_h^2  \wedge T \right)^{1/2}\,.
    \end{split}
    \]
    Similarly we can control $\int_M \partial f\wedge \partial_J h \wedge \Omega_\psi  \wedge T$. Moreover, by integrating by parts we infer 
    $$
    \int_M \partial f \wedge \partial_J f \wedge \Omega_h\wedge T= \frac{1}{4}\int_M (\psi-\phi)\left(\Omega_\phi-\Omega_\psi\right)\wedge \Omega_h\wedge T\leq \int_M (-h)\Omega_h^2\wedge T \,,
    $$
    where we used $(-h)\Omega_h+\frac{1}{4}(\phi-\psi)(\Omega_\phi-\Omega_\psi)=-\frac{1}{2}\left(\psi\Omega_\phi+\phi \Omega_\psi\right)\geq 0$. Hence
    \begin{equation}\label{ineq1}
        \int_M \partial f \wedge \partial_J f \wedge \Omega \wedge T
        \leq  5\left(\int_M \partial f\wedge \partial_J f\wedge \Omega_h \wedge T\right)^{1/2}\left(\int_M(-h)\Omega_h^2\wedge T\right)^{1/2}\,.
    \end{equation}
    For $T=\Omega_h^l\wedge \Omega^{n-2-l}$, where $l=0,\dots,n-2$, we also have that
    \begin{equation}\label{ineq2}
    \int_M(-h)\Omega_h^2\wedge T\leq\int_M(-h)\Omega_h^n\,.
    \end{equation}
    Thus, applying iteratively the inequalities \eqref{ineq1}, \eqref{ineq2} to $T=\Omega_h^l \wedge \Omega^{n-2-l}$ for all $l=0,\dots,n-2$ we deduce
    \[
    \begin{split}
        \int_M \partial f\wedge \partial_J f \wedge \Omega^{n-1}&\leq 5^{\sum_{j=0}^{n-2} \frac{1}{2^j}}\left(\int_M \partial f\wedge \partial_J f \wedge \Omega_h^{n-1}\right)^{\frac{1}{2^{n-1}}}\left(\int_M(-h)\Omega_h^n\right)^{\sum_{j=1}^{n-1}\frac{1}{2^j}}\\
        &\leq 25 \left(\int_M \partial f\wedge \partial_J f \wedge \Omega_h^{n-1}\right)^{\frac{1}{2^{n-1}}}\int_M(-h)\Omega_h^n\,,
    \end{split}
    \]
    where we also used that $\int_M (-h)\Omega_h^n\geq 1$. But, since $\binom{n-1}{l}\leq 2^{n-1}$,
    $$
    \int_M \partial f \wedge \partial_J f \wedge \Omega_h^{n-1}\leq \int_M \partial f \wedge \partial_J f \wedge \sum_{l=0}^{n-1}\Omega_\phi^l \wedge \Omega_\psi^{n-1-l}=\int_M \frac{-f}{2}\left(\Omega_\phi^n-\Omega_\psi^n\right)=0\,.
    $$
    Hence we have
    $$
    \int_M \partial f \wedge \partial_J f\wedge \Omega^{n-1}=0\,,
    $$
    i.e. $\partial f=0$, which concludes the proof.
\end{proof}

\appendix
\section{Technical Results}\label{sec:Appendix}
In this appendix, we collect some technical results used in the paper. Although some of them are already known in the literature, we provide proofs for the reader’s convenience.    

\subsection{Approximations of quaternionic psh functions.}
In the classical K\"ahler setting, $\omega$-psh functions on compact manifolds can be approximated by smooth $\omega$-psh functions; see \cite[Thm.~1]{Blocki-Kolodziej}. To the best of our knowledge, no analogous result is currently known for $\Omega$-quaternionic psh functions on compact HKT manifolds. The main technical obstruction to adapting the argument of \cite{Blocki-Kolodziej} to the quaternionic setting appears to be, as is often the case, the lack of a quaternionic Liouville-type theorem.

In this section, we prove that every bounded $\Omega$-quaternionic psh function can be approximated by smooth functions that are plurisubharmonic with respect to a suitable multiple of $\Omega$. As a first step we consider the local case: 

\begin{prop}[{\cite[Prop. 2.1.(5)]{Wan 2020}}]\label{prop_convolution}
Let $A\subseteq \H^n$ be a domain, $ \phi\in \QPSH(A) $ and denote $\phi_\epsilon=\phi\star \chi_\epsilon $ the standard regularisation on $ A_\epsilon=\{ x\in A \mid \mathrm{dist}(x,\partial A)>\epsilon \} $ via mollification. Then $ \phi_\epsilon\in \QPSH(A_\epsilon)\cap C^\infty(A_\epsilon) $ and it decreases to $ \phi $ as $ \epsilon \searrow 0 $.
\end{prop}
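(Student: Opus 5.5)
We need to prove: for $\phi\in\QPSH(A)$, the mollification $\phi_\epsilon=\phi\star\chi_\epsilon$ is smooth and quaternionic psh on $A_\epsilon$, and it decreases to $\phi$ as $\epsilon\searrow0$. We take $\chi$ radial, nonnegative, supported in the unit ball, with integral one, and set $\chi_\epsilon(x)=\epsilon^{-4n}\chi(x/\epsilon)$.

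\emph{Smoothness.} Since $\phi$ is quaternionic psh it is subharmonic on $\H^n\cong\R^{4n}$ (as recalled in Section \ref{Sec 2}), hence $\phi\in L^1_{\rm loc}(A)$. Differentiation under the integral sign then gives $\phi_\epsilon\in C^\infty(A_\epsilon)$.

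\emph{Quaternionic plurisubharmonicity.} Write
\[
\phi_\epsilon(x)=\int_{\H^n}\phi(x-z)\chi_\epsilon(z)\,d\lambda(z).
\]
Fix $x\in A_\epsilon$ and $v\in\H^n$, and restrict to the affine right quaternionic line $y\mapsto x+vy$. For each fixed $z$, the map $y\mapsto\phi(x-z+vy)$ is the restriction of $\phi$ to a translated right quaternionic line. It is therefore subharmonic in $y\in\H\cong\R^4$ or identically $-\infty$, wherever defined.

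Subharmonicity on a line is characterised by upper semicontinuity together with the sub-mean value property on small spheres. Upper semicontinuity is automatic here, since $\phi_\epsilon$ is continuous. For the sub-mean value property, we integrate the sub-mean value inequalities of $y\mapsto\phi(x-z+vy)$ against the weight $\chi_\epsilon(z)\,dz$ and apply Fubini. The application of Fubini is legitimate because $\phi$ is bounded above on compacts, so the integrands are bounded above, and $\phi\in L^1_{\rm loc}$ controls the negative part. This yields the sub-mean value inequality for $y\mapsto\phi_\epsilon(x+vy)$, so $\phi_\epsilon$ is subharmonic on every right quaternionic line. Finally $\phi_\epsilon\not\equiv-\infty$, since it is finite everywhere, being an average of an $L^1_{\rm loc}$ function. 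Hence $\phi_\epsilon\in\QPSH(A_\epsilon)$.

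\emph{Monotonicity and convergence.} These are purely properties of subharmonic functions in $\R^{4n}$, applied with a radial kernel. Writing the convolution in polar coordinates gives
\[
\phi_\epsilon(x)=\int_0^1 M_\phi(x,\epsilon r)\,\tilde\chi(r)\,dr,
\]
where $M_\phi(x,\rho)$ is the spherical mean of $\phi$ and $\tilde\chi\ge0$ has integral one. For subharmonic $\phi$ the spherical mean $M_\phi(x,\rho)$ is non-decreasing in $\rho$, so $\phi_\epsilon$ is non-decreasing in $\epsilon$; that is, it decreases as $\epsilon\searrow0$. Moreover $M_\phi(x,\rho)\ge\phi(x)$, so $\phi_\epsilon\ge\phi$. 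Upper semicontinuity gives $\limsup_{\rho\to0}M_\phi(x,\rho)\le\phi(x)$. Combining these, $\phi_\epsilon(x)\searrow\phi(x)$ at every point, including points where $\phi(x)=-\infty$.

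The main point requiring care is the quaternionic psh step: the sub-mean value property must be verified on each right quaternionic line, and $\phi$ may equal $-\infty$ on some translated lines. This causes no problem, because an identically $-\infty$ function satisfies the sub-mean value inequality trivially, and the $L^1_{\rm loc}$ bound ensures the averaged function is finite.
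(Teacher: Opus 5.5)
Your proposal is correct and follows essentially the same route as the paper. Classical subharmonic theory on $\R^{4n}$ gives smoothness and the monotone convergence, and quaternionic plurisubharmonicity follows by integrating the sub-mean inequality on translated right quaternionic lines against $\chi_\epsilon$ and exchanging the order of integration; the paper uses ball means where you use spherical means, which makes no difference.
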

\begin{proof}
    As $\varphi$ is subharmonic, the sequence $\varphi_\varepsilon$ is subharmonic and decreases to $\varphi$ as $\varepsilon\searrow 0$. To conclude the proof it then remains to show that $\varphi_\varepsilon$ satisfies the submean inequality when restricted on an affine right quaternionic line. Namely, we want
    $$
    \phi_\epsilon(x)\leq \frac{1}{\lvert \mathbb{B}\rvert}\int_{\mathbb{B}} \phi_\epsilon(x+vy) \, d\lambda_1(y)
    $$
    for any $x\in A_\epsilon$ and any $v\in \H^n$ such that $x+v y\in A_\epsilon$ for any $y\in\mathbb{B}$, where $\mathbb{B}$ is the $4$-dimensional unit ball and $d\lambda_r$ is the Lebesgue measure on $\R^{4r}$. As $\phi$ is quaternionic psh, identifying $\H^n\simeq \R^{4n}$ it follows that 
    $$
    \varphi_\epsilon(x)= \int_{\R^{4n}}\varphi(x-a)\chi_\epsilon(a)\,d\lambda_n(a)\leq \frac{1}{\lvert \mathbb{B} \rvert}\int_{\R^{4n}} \int_{\mathbb{B}}\varphi(x-a+vy) \chi_\epsilon(a) \,d\lambda_1(y) d\lambda_n(a)
    $$
    for any $v\in\H^n$ such that $\lVert v\rVert\leq \mathrm{dist}(x,\partial A)- \epsilon$ so that $x-a+vy\in A$ for any $y\in \mathbb{B}$ and for any $a\in \R^{4n}$ such that $\|a\|\leq \epsilon$. Switching the integrals we deduce that
    $$
    \varphi_\epsilon(x)\leq \frac{1}{\lvert \mathbb{B}\rvert}\int_{\mathbb{B}} \int_{\R^{4n}}\varphi(x-a+vy) \chi_\epsilon(a) \,d\lambda_n(a) d\lambda_1(y)=  \frac{1}{\lvert \mathbb{B}\rvert}\int_{\mathbb{B}} \varphi_\epsilon(x+vy) \, d\lambda_1(y)\,,
    $$
    concluding the proof.
\end{proof}
Now we focus on the main result of the present subsection. 
\begin{prop}\label{Prop:approx}
    Let $\varphi\in \QPSH(M,\Omega)\cap L^\infty(M)$, then there exists a constant $C>0$, depending only on $(M, \Omega),\|\phi\|_{L^\infty(M)}$, and a family $\{\varphi_\epsilon\}_{\varepsilon>0}\subset\QPSH(M,C\Omega)\cap C^{\infty}(M)$ such that $\varphi_{\epsilon}\searrow \varphi$.
\end{prop}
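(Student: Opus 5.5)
We will glue local convolutions with a partition of unity. The loss of positivity caused by the gluing is absorbed by enlarging $\Omega$ to $C\Omega$.

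\emph{Setup.} Cover $M$ by finitely many quaternionic charts $U_1,\dots,U_N$ with smooth potentials $u_k$, $\Omega|_{U_k}=\partial\partial_J u_k$. Choose relatively compact open sets $V_k\Subset W_k\Subset U_k$ with $\bigcup V_k=M$. By Proposition \ref{prop_convolution}, the functions $\psi_{k,\epsilon}:=(u_k+\phi)\star\chi_\epsilon$ are smooth and quaternionic psh on $W_k$ for $\epsilon$ small, and decrease to $u_k+\phi$. Set
\[
\phi_{k,\epsilon}:=\psi_{k,\epsilon}-u_k .
\]
Since $u_k$ is smooth, $u_k\star\chi_\epsilon\to u_k$ in $C^2(\bar W_k)$. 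Hence $\phi_{k,\epsilon}-\phi\star\chi_\epsilon\to 0$ in $C^2$, and in particular $\phi_{k,\epsilon}$ is $\Omega$-quaternionic psh on $W_k$ up to an error $\delta_\epsilon\Omega$ with $\delta_\epsilon\to0$.

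\emph{Gluing.} Take a smooth partition of unity $\{\rho_k\}$ subordinate to $\{V_k\}$ and define
\[
\tilde\phi_\epsilon:=\sum_k\rho_k\,\phi_{k,\epsilon}.
\]
Then
\[
\partial\partial_J\tilde\phi_\epsilon=\sum_k\rho_k\,\partial\partial_J\phi_{k,\epsilon}
+\sum_k\bigl(\partial\rho_k\wedge\partial_J\phi_{k,\epsilon}+\partial\phi_{k,\epsilon}\wedge\partial_J\rho_k\bigr)
+\sum_k\phi_{k,\epsilon}\,\partial\partial_J\rho_k .
\]
The first sum is at least $-\Omega$. The last sum is bounded by $C\|\phi\|_\infty\Omega$, since $|\phi_{k,\epsilon}|\le\|\phi\|_\infty+C$. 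In the middle sum, use $\sum_k\partial\rho_k=0$ to replace $\phi_{k,\epsilon}$ by $\phi_{k,\epsilon}-\phi_{l,\epsilon}$ on overlaps. These differences are $(u_l-u_k)\star\chi_\epsilon-(u_l-u_k)$ plus a smooth quantity, so they converge to $0$ in $C^1$ and the cross terms are controlled uniformly.

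This gradient control is the main obstacle. The first derivatives of $\phi_\epsilon$ themselves are \emph{not} uniformly bounded; only the differences $\phi_{k,\epsilon}-\phi_{l,\epsilon}=u_l\star\chi_\epsilon-u_l-(u_k\star\chi_\epsilon-u_k)$ are smooth and small. This identity holds because $\phi\star\chi_\epsilon$ is the same in each flat chart up to affine changes of coordinates; I would choose the charts with affine quaternionic transition maps (Sommese) and a translation-invariant mollifier, so that convolution commutes with the transitions. With this choice the cross terms are $O(\epsilon)$, and therefore $\tilde\phi_\epsilon\in\QPSH(M,C\Omega)\cap C^\infty(M)$ with $C=C(M,\Omega,\|\phi\|_\infty)$.

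\emph{Monotonicity.} Since each $\phi_{k,\epsilon}$ decreases to $\phi$ on $V_k$, the convex combination $\tilde\phi_\epsilon$ decreases to $\phi$ only up to the smooth errors $u_k\star\chi_\epsilon-u_k=O(\epsilon^2)$. To restore monotonicity I would instead take
\[
\phi_\epsilon:=\tilde\phi_\epsilon+A\epsilon^2
\]
along a sequence $\epsilon_j\searrow0$ chosen so that $\phi_{\epsilon_{j+1}}\le\phi_{\epsilon_j}$. The constant shift $A\epsilon^2$ does not affect $C\Omega$-plurisubharmonicity, and pointwise convergence to $\phi$ follows from that of each $\psi_{k,\epsilon}$.
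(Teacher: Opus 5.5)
\textbf{There is a genuine gap}, at exactly the step you call the main obstacle, and it cannot be closed the way you propose.

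Your identity $\phi_{k,\epsilon}-\phi_{l,\epsilon}=(u_k\star\chi_\epsilon-u_k)-(u_l\star\chi_\epsilon-u_l)$ requires $\phi\star\chi_\epsilon$ to be the same function whether the convolution is taken in chart $k$ or in chart $l$. Translation invariance of the kernel only handles the translational part of the transitions. If $\eta_l=T\circ\eta_k$ with $T(x)=Ax+b$ and $A\in\mathrm{GL}(n,\H)$, then
\[
\bigl((f\circ T^{-1})\star\chi_\epsilon\bigr)\circ T=f\star\chi^A_\epsilon,\qquad \chi^A_\epsilon(y):=|\det A|\,\chi_\epsilon(Ay),
\]
and this equals $f\star\chi_\epsilon$ only if the kernel is $A$-invariant.

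Sommese gives affine transitions, not isometric ones. A compactly supported kernel invariant under all transitions exists only if their linear parts generate a relatively compact group. In that case the Euclidean metrics of suitably normalised charts glue to a compatible flat hyperk\"ahler metric, which is Alesker's already known case and precisely what the theorem is meant to go beyond. On a Hopf manifold $(\H^n\setminus\{0\})/\langle x\mapsto 2x\rangle$, for instance, some transition must have $|\det A|\neq 1$, so no kernel is invariant under it.

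On such an overlap, in the coordinates of chart $k$, $\phi_{k,\epsilon}-\phi_{l,\epsilon}$ equals $\phi\star(\chi_\epsilon-\chi^A_\epsilon)$ up to $C^2$-small smooth terms. Its gradient is only bounded by $C\|\phi\|_\infty/\epsilon$, and nothing keeps it bounded for a merely bounded $\phi$: bounded quaternionic psh functions need not be Lipschitz, or even continuous. So the middle sum in your expansion of $\partial\partial_J\tilde\phi_\epsilon$ is an indefinite term that is not controlled uniformly in $\epsilon$. It cannot be absorbed into $C\Omega$, because the only positive term, $\sum_k\rho_k\,\partial\partial_J\psi_{k,\epsilon}$, gives no control on first derivatives. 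Linear gluing would need uniform $C^1$ control of the local regularisations, which an $L^\infty$ bound does not provide.

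The missing idea is to glue non-linearly, so that first derivatives appear only inside a nonnegative quadratic expression. The paper keeps your local $\phi_{k,\epsilon}$, writes $\rho_k=\theta_k^2$, and sets $\phi_\epsilon:=\log\bigl(\sum_k\theta_k^2e^{\phi_{k,\epsilon}}\bigr)$. Put $S:=\sum_k\theta_k^2e^{\phi_{k,\epsilon}}$ and $\alpha_k:=2\partial\theta_k+\theta_k\partial\phi_{k,\epsilon}$. Then all gradient terms of $\partial\partial_J\phi_\epsilon$ collect into
\[
\frac{\sum_k e^{\phi_{k,\epsilon}}\alpha_k\wedge J^{-1}\bar\alpha_k}{S}-\frac{\sum_{j,k}\theta_j\theta_k e^{\phi_{j,\epsilon}+\phi_{k,\epsilon}}\alpha_j\wedge J^{-1}\bar\alpha_k}{S^2}\geq 0 ,
\]
which is nonnegative by the Lagrange identity. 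The remaining terms satisfy $\theta_k^2\partial\partial_J\phi_{k,\epsilon}\geq-\theta_k^2\Omega$ and $2\theta_k\partial\partial_J\theta_k-2\partial\theta_k\wedge\partial_J\theta_k\geq-C_1\Omega$. With $L:=\sup_{k,\epsilon}\|\phi_{k,\epsilon}\|_\infty$ and $S\geq e^{-L}$, this gives $\partial\partial_J\phi_\epsilon\geq-C\Omega$, where $C$ depends only on the cover and on $\|\phi\|_\infty$. No comparison of regularisations across charts is ever needed, so arbitrary affine transitions are harmless.

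Finally, your $+A\epsilon^2$ correction is unnecessary. Each $\phi_{k,\epsilon}=\psi_{k,\epsilon}-u_k$ already decreases to $\phi$, because you subtract the unmollified $u_k$. Both the linear and the log-sum-exp gluings are monotone in the $\phi_{k,\epsilon}$, and $\log\sum_k\theta_k^2e^{\phi}=\phi$.
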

\begin{proof}

   Let $\{B_j\}_{j=1}^N$ be a finite cover on $M$ such that $\Omega\vert_{B_j}=\partial \partial_J u_j$ for some $u_j\in C^\infty(B_j)$ and the norms $\|u_j\|_{L^{\infty}(B_j)}$  are uniformly bounded in $j$. Let also assume that there exist $B_j'\Subset B_j$ such that $\{B_j'\}_{j=1}^N$ is still a cover of $M$. Let also $\{\rho_j\}$ be a partition of unity subordinated to the cover $\{B_j'\}$. Define $\phi_{j,\epsilon}:= (\phi \vert_{B_j}+u_j)\star\chi_\epsilon -u_j$. By Proposition \ref{prop_convolution}, for $\epsilon>0$ small enough, $\phi_{j,\epsilon}\in \QPSH(B_j',\Omega)\cap C^\infty(B_j')$ and $\phi_{j,\epsilon}$ is uniformly bounded in $j,\epsilon$ on $B_j'$. 
   We then set
    $$
    \phi_\varepsilon:=\log\left(\sum_j \rho_j e^{\phi_{j,\varepsilon}}\right)\,.
    $$
    We claim that there exists a constant $C>0$ such that $\phi_\varepsilon \in \QPSH(M,C\Omega)$.
    
    Without loss of generality we can assume that $\rho_k=\theta_k^2$ for $\theta_k$ smooth function, and we set $\alpha_k:=2\partial \theta_k +\theta_k \partial \phi_{k,\epsilon}$.
    Then, using 
    \[
    J^{-1}\overline{\alpha}_k=2\partial_J \theta_k+\theta_k \partial_J \phi_{k,\epsilon}\,,
    \]
    we obtain
    \[
    \partial_J \phi_\epsilon= \frac{\sum_k (2\theta_k\partial_J \theta_k +\theta_k^2 \partial_J \phi_{k,\epsilon})\mathrm{e}^{\phi_{k,\epsilon}}}{\sum_k \theta_k^2 \mathrm{e}^{\phi_{k,\epsilon}}}=\frac{\sum_k \theta_kJ^{-1}\overline{ \alpha}_k\mathrm{e}^{\phi_{k,\epsilon}}}{\sum_k \theta^2_k \mathrm{e}^{\phi_{k,\epsilon}}}
    \]
    and 
    \[
    \begin{split}
    \partial \partial_J \phi_\epsilon
    &= \frac{\sum_k(-2\partial \theta_k \wedge \partial_J \theta_k +2\theta_k\partial \partial_J \theta_k+\theta_k^2 \partial \partial_J \phi_{k,\epsilon} + \alpha_k \wedge J^{-1}\overline{\alpha}_k)\mathrm{e}^{\phi_{k,\epsilon}}}{\sum_k \theta_k^2 \mathrm{e}^{\phi_{k,\epsilon}}}\\
    &\qquad -\frac{\sum_{j,k}\theta_j\theta_k\alpha_j \wedge J^{-1}\overline{\alpha}_k\mathrm{e}^{\phi_{j,\epsilon}}\mathrm{e}^{\phi_{k,\epsilon}}}{(\sum_k \theta_k^2 \mathrm{e}^{\phi_{k,\epsilon}})^2}\,.
    \end{split}
    \]
    Since the $\rho_k$'s are smooth, we have $2\theta_k\partial \partial_J \theta_k-2\partial \theta_k \wedge \partial_J \theta_k\geq -C_1\Omega$ for a constant $C_1>0$ uniform in $k$. Also, we have $\partial \partial_J \phi_{k,\epsilon} \geq - \Omega $. Furthermore the Legendre identity  
    $$
    \left(\sum_j a_j^2\right)\left(\sum_k b_k^2\right)- \sum_{j,k}a_j b_ja_kb_k=\sum_{j<k} \lvert a_jb_k-a_kb_j\rvert^2
    $$
    applied to $(1,0)$-forms gives
    $$
    \left(\sum_j\alpha_j\wedge J^{-1}\overline{\alpha}_je^{\phi_{j,\epsilon}}\right)\left(\sum_k \theta_k^2e^{\phi_{k,\epsilon}}\right)- \sum_{j,k} \theta_je^{\phi_{j,\epsilon}}\alpha_j \wedge \theta_k e^{\phi_{k,\epsilon}}J^{-1}\overline{\alpha}_k=\sum_{j<k} \sigma_{j,k}\wedge J^{-1}\overline{\sigma}_{j,k} \geq 0\,,
    $$
    where $\sigma_{j,k}=\theta_ke^{\phi_{k,\epsilon}/2}\alpha_je^{\phi_{j,\epsilon}/2}-\theta_je^{\phi_{j,\epsilon}/2}\alpha_ke^{\phi_{k,\epsilon}/2}$. 
    It follows
    $$
    \partial\partial_J \phi_\epsilon\geq -\frac{1+C_1\sum_k e^{\varphi_{k,\epsilon}}}{\sum_k \theta_k^2 e^{\varphi_{k,\epsilon}}}\Omega \geq  
    -C \Omega
    $$
    for a constant $C$ which does not depend on $\epsilon$, where in the last inequality we used that $\phi_{k,\epsilon}$ is uniformly bounded in $\epsilon$ and $k$.
\end{proof}

\subsection{Sequences of quaternionic psh functions}
In this subsection we focus on sequences of quaternionic psh functions.  
\begin{prop}[{\cite[Prop. 2.1]{Wan 2020}}]\label{3}
Let $A\subseteq \H^n$ be a domain. The following properties hold:
\begin{enumerate}
\item If $ (\phi_j)_{j\in \N} \subseteq \QPSH(A) $ is a decreasing sequence then $ \lim_j \phi_j $ is either quaternionic plurisubharmonic or constant $ -\infty $.

\vspace{0.1cm}
\item If $ (\phi_j)_{j\in J}\subseteq \QPSH(A) $ is a family such that $ \phi=\sup_{j\in J}\phi_j $ is locally bounded from above, then also the upper semicontinuous regularisation $ \phi^*(x)=\limsup_{A\ni y\to x}\phi(y) $ belongs to $\QPSH(A)$.
\end{enumerate}
\end{prop}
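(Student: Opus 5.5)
Both items reduce to the corresponding statements for subharmonic functions, applied on $A\subseteq\R^{4n}$ and on every affine right quaternionic line. This is the same strategy as in the complex case.

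\emph{Item (1).} Set $\phi:=\lim_j\phi_j$. It is a decreasing limit of upper semicontinuous functions, hence upper semicontinuous. Fix $x\in A$ and $v\in\H^n$, and consider the restrictions $y\mapsto\phi_j(x+vy)$ on the open set $\{y\in\H : x+vy\in A\}$. Each restriction is subharmonic or identically $-\infty$, so on every connected component the decreasing limit $y\mapsto\phi(x+vy)$ is subharmonic or identically $-\infty$, by the classical one-variable (four real dimensions) result. Thus $\phi$ has the required behaviour on every right quaternionic line. It remains to show the dichotomy on $A$ itself: either $\phi\not\equiv-\infty$, or $\phi\equiv-\infty$. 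Since quaternionic psh functions are subharmonic on $A$, $\phi$ is a decreasing limit of subharmonic functions on the connected open set $A$. By the classical theorem, $\phi$ is therefore either subharmonic (in particular $\not\equiv-\infty$) or identically $-\infty$. This proves (1).

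\emph{Item (2).} This is the step requiring care, because the upper semicontinuous regularisation on $A$ does not commute with restriction to a line. I would use Choquet's lemma. There is a countable subfamily $(\phi_{j_k})$ with $(\sup_k\phi_{j_k})^*=\phi^*$, so we may assume the family is countable. Replacing $\phi_{j_k}$ by $\max(\phi_{j_1},\dots,\phi_{j_k})$, which is still quaternionic psh because maxima of functions subharmonic on every line are subharmonic on every line, we may further assume the sequence is increasing.

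Now I would check the sub-mean value property on lines directly. Fix $x$, $v$, and $r>0$ such that the closed quaternionic disc $\{x+vy : |y|\le r\}$ lies in $A$. Each $\phi_k$ satisfies
$$
\phi_k(x)\le \frac{1}{|\mathbb{B}_r|}\int_{\mathbb{B}_r}\phi_k(x+vy)\,d\lambda_1(y)\le \frac{1}{|\mathbb{B}_r|}\int_{\mathbb{B}_r}\phi^*(x+vy)\,d\lambda_1(y).
$$
Taking the supremum over $k$ gives the sub-mean inequality for $\phi$ at the centre $x$, with $\phi^*$ inside the integral. Next, for $x'\to x$ and $v'\to v$, Fatou's lemma in reverse (valid since $\phi^*$ is locally bounded above) yields
$$
\limsup\frac{1}{|\mathbb{B}_r|}\int_{\mathbb{B}_r}\phi^*(x'+v'y)\,d\lambda_1(y)\le\frac{1}{|\mathbb{B}_r|}\int_{\mathbb{B}_r}\phi^*(x+vy)\,d\lambda_1(y)
$$
by upper semicontinuity of $\phi^*$. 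Taking $\limsup_{x'\to x}$ with $v'=v$ then gives $\phi^*(x)\le$ the average of $\phi^*$ over the disc. Since $\phi^*$ is upper semicontinuous, the sub-mean inequality at every point and every small radius shows that its restriction to each line is subharmonic or $\equiv-\infty$. Finally, $\phi^*\ge\phi_j\not\equiv-\infty$, so $\phi^*\not\equiv-\infty$ and $\phi^*\in\QPSH(A)$.

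The main obstacle I anticipate is exactly this non-commutation of the regularisation with restriction. It is resolved by proving the sub-mean inequality on the full family of nearby lines and passing to the $\limsup$ via upper semicontinuity, rather than regularising along a single line.
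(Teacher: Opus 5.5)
Your proof is correct. For item (1) it coincides with the paper's argument: restrict to affine right quaternionic lines, apply the classical statement for decreasing limits of subharmonic functions, and apply it also on $A$ itself.

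For item (2) you take a genuinely different route. The paper, after the Choquet and increasing-maxima reduction, mollifies. The functions $\psi_l\star\chi_\varepsilon$ are quaternionic psh by Proposition \ref{prop_convolution} and increase in $l$, so their limit $\psi_\varepsilon$ is a continuous quaternionic psh function with $\phi\le\phi^*\le\psi_\varepsilon\le\phi\star\chi_\varepsilon$. Letting $\varepsilon\searrow 0$, item (1) and the $L^1_{\mathrm{loc}}$ convergence of $\phi\star\chi_\varepsilon$ identify the decreasing limit with $\phi^*$.

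You instead verify the sub-mean inequality for $\phi^*$ directly on each line. You correctly identify the one delicate point: the $\limsup$ defining $\phi^*(x)$ runs over $x'$ in a full $4n$-dimensional neighbourhood. You handle it by using the inequality on the parallel discs $x'+v\mathbb{B}_r$, then reverse Fatou together with upper semicontinuity of $\phi^*$.

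Your route is more elementary and self-contained. It uses neither mollification, nor item (1), nor the almost-everywhere identification step. In fact the Choquet and increasing-maxima reductions are superfluous in your argument. The inequality $\phi_j(x')\le\frac{1}{|\mathbb{B}_r|}\int_{\mathbb{B}_r}\phi^*(x'+vy)\,d\lambda_1(y)$ holds for every member of an arbitrary family and passes to the supremum directly.

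What your route costs is an appeal to the classical criterion that an upper semicontinuous function satisfying the solid-ball sub-mean inequality at every point is subharmonic. The paper's route instead reuses the regularisation machinery it relies on throughout. As a by-product, it yields continuous quaternionic psh functions $\psi_\varepsilon$ decreasing to $\phi^*$.
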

\begin{proof}
    We recall that subharmonic functions on an open set of $\R^{4n}$ satisfy the properties of the Proposition. Thus (1) directly follows by restriction to affine right quaternionic lines.

    Let us now prove (2). By Choquet's Lemma \cite[Lem. 4.31]{Guedj-Zeriahi book} there exists a countable subfamily $\{j_k\}_k $ such that $\varphi=\sup_{k\in\N}\varphi_{j_k}$. Set $\psi_l:=\max_{k=1,\dots,l}\varphi_{j_k}$. By restriction to affine right quaternionic lines, we obtain that $\psi_l\in \QPSH(A)$. Moreover $\psi_l$ is now increasing in $l\in\N$ and $\varphi=\sup_l \psi_l$. Let $\psi_{l,\varepsilon}:=\psi_l\star\chi_\varepsilon $ be the standard regularisation via mollification. By Proposition \ref{prop_convolution} we know that $\psi_{l,\varepsilon}\in \QPSH(A)\cap C^\infty(A)$ is such that $(l,\varepsilon)\to \psi_{l,\varepsilon}$ is increasing in $l$ and in $\varepsilon$. In particular the function $\psi_{\varepsilon}:=\lim_{l\to +\infty}\psi_{l,\varepsilon}$ is continuous, locally bounded from above and it satisfies the submean inequality when restricted to any affine right quaternionic line, i.e. $\psi_{\varepsilon}$ is quaternionic psh. By construction we have
    $$
    \varphi\leq \varphi^*\leq \psi_{\varepsilon}\leq \varphi\star \chi_\varepsilon\,.
    $$
    Since $\varphi\in L^1_{\rm loc}$ the sequence $\varphi\star \chi_\varepsilon$ converges to $\varphi$ in $L^1_{\rm loc}$ as $\varepsilon\to 0$, while $\psi_\varepsilon$ decreases to a quaternionic psh function $\psi$ by $(1)$. Hence $\varphi^*=\psi\in \QPSH(A)$ and (2) follows.      
\end{proof}


Next, we observe how some local properties of quaternionic psh functions can be generalised to $\Omega$-quaternionic psh functions on compact locally flat HKT manifolds.

\begin{prop}\label{prop_basic}
Let $(M,I,J,K,\Omega)$ be a compact locally flat HKT manifold:
\begin{enumerate}

\item If $(\phi_j)_{j\in \N} $ is a decreasing sequence in $\QPSH(M,\Omega)$ then either it converges uniformly to $-\infty$ or $ \lim_j \phi_j \in \QPSH(M,\Omega)$.

\vspace{0.1cm}
\item If $ (\phi_j)_{j\in \N} $ is a sequence in $\QPSH(M,\Omega)$ such that $ \varphi=\sup_{j\in \N}\varphi_j $ is bounded from above, then the upper semicontinuous regularisation $ \varphi^* $ belongs to $ \QPSH(M,\Omega) $.
\vspace{0.1cm}
\item If $(\phi_j)_{j\in \N} $ is a sequence in $\QPSH(M,\Omega)$ convergent to $\phi\in \QPSH(M,\Omega)$ then the sequence $\psi_j:=(\sup_{l\geq j}\phi_l)^*$ decreases to $\phi$.
\item If $(\phi_j)_{j\in \N}$ is a sequence in $\QPSH(M,\Omega)$ which is uniformly bounded from above and which does not converge uniformly to $-\infty$, then it admits a subsequence $(\phi_{j_k})_{k\in\N} $ that converges to $\phi\in \QPSH(M,\Omega)$ in $L^1(M)$.
\end{enumerate}
\end{prop}
\begin{proof}
    The first three points are immediate consequences of Proposition \ref{3} since on any local quaternionic chart such that $\Omega=\partial \partial_J u$ the functions $u+\phi_j$ are quaternionic psh.

    Also for the last point it is enough to work locally. So let us assume that $\phi_j\in \QPSH(A)$, where $A$ is a domain of $\H^n$ and suppose that they do not converge locally uniformly to $-\infty$.
    Without loss of generality we may suppose $\phi_j \leq 0$. Since $(\phi_j)_{j\in \N}$ does not converge to $-\infty$ locally uniformly, there exist a constant $C>0$ and a compact subset $K\subseteq A$ such that
    $\limsup_{j\to +\infty} \sup_K \phi_j >-C$. Up to a subsequence, there exist points $x_j\in K$ such that $\phi_j(x_j) \geq -2C$ and $x_j\to x\in K$. Fix a ball $\mathbb{B}\subseteq A$ centred at $x$. For $j$ large enough there exists a ball $\mathbb{B}_j$ centred at $x_j$ and such that $\mathbb{B}\subseteq \mathbb{B}_j \subseteq A$. Since quaternionic psh functions are subharmonic, it follows that
\[
\int_{\mathbb{B}} \phi_j d\lambda \geq \int_{\mathbb{B}_j} \phi_j d\lambda \geq \lambda(\mathbb{B}_j)\phi_j(x_j) \geq -2C\lambda(\mathbb{B}_j)\,,
\]
where $\lambda$ is the Lebesgue measure on $A$.

Next, consider the set $X$ of points in $A$ that have a neighbourhood $U\subseteq A$ such that $\int_U \phi_jd\lambda $ is uniformly bounded from below. Clearly $X$ is open and non-empty by what was proved above, we aim to show that it is also closed. Let $y$ be a point in the closure of $X$ and fix a ball $\mathbb{B}_\epsilon(y)$ centred at $y$ of radius $\epsilon>0$ small enough so that $\mathbb{B}_\epsilon(y)\subseteq A$. There exists a point $z\in \mathbb{B}_\epsilon(y)\cap X$ and thus $z$ has a neighbourhood $U\subseteq A$ such that $\int_U \phi_jd\lambda$ is uniformly bounded from below. In particular, there exist points $z'_j\in \mathbb{B}_\epsilon(y)$ close to $z$ such that $\phi_j(z'_j)>-C'$ for all $j$ and we infer that $\int_{\mathbb{B}_{\epsilon+\sigma}(z'_j)}\phi_jd\lambda$ is uniformly bounded for any $\sigma>0$ small. Choosing the points $z_j$ sufficiently close to $z$ we have that $\mathbb{B}_{\epsilon+\sigma}(z_j')\supset \mathbb{B}_{\epsilon}(z)$ for any $j\in\N$. We deduce that $\mathbb{B}_{\varepsilon}(z)$ is a neighborhood of $y$ and that $\int_{\mathbb{B}_{\varepsilon}(z)}\varphi_jd\lambda\geq \int_{\mathbb{B}_{\varepsilon+\sigma}(z_j)}\varphi_jd\lambda$ is uniformly bounded in $j\in\N$.
We conclude that $y\in X$ and thus $X$ is closed. Consequently, $X=A$ and the sequence $\phi_j$ is bounded in $L^1_{\mathrm{loc}}(A)$.
%
%

We can now proceed similarly to the complex setting (cf. \cite[Thm. 1.46.(2)]{Guedj-Zeriahi book}).
Indeed the sequence of non-negative measures $\mu_j:=(-\phi_j)\lambda$ is bounded in the weak topology of Radon measures on $A$ and thus admits a convergent subsequence. By the classical theory of distribution the positive limit Radon measure $\mu$ can be identified with a distribution that we will denote $-\psi$. For any non-negative $\alpha\in \Lambda^{2n-2,2n}(A)$ with compact support we then have
\begin{equation}
    \label{eqn:2}
    \begin{split}
        0\leq (\partial \partial_J \phi_j)(\alpha)&= \int_A \phi_j \partial \partial_J \alpha= \int_A \phi_j f_\alpha \, d\lambda
        \\
        &=-\int_A f_\alpha\, d\mu_j \longrightarrow -\int_A f_\alpha \, d\mu= \psi(f_\alpha)=(\partial \partial_J\psi)(\alpha)\,,
    \end{split}
\end{equation}
where we clearly set $\partial\partial_J \alpha=f_\alpha d\lambda$ for $f_\alpha\geq 0$ and where we recall that
$$
(\partial \partial_J \psi)(\alpha):=\psi(f_\alpha)\,,
$$
by the usual extension of derivation to distributions (as in \eqref{eqn:curr}) and the duality between top forms and functions given by the Lebesgue measure, i.e. $ \psi(f\, d\lambda):= \psi(f). $
Letting $\chi_\epsilon$ be the standard mollifiers as in Proposition \ref{prop_convolution}, we set $\psi_\epsilon(y):=\psi(\chi_{\epsilon,y})$ for $y\in A_\epsilon, $ where $\chi_{\epsilon,y}(x):=\chi_\epsilon(y-x)$ for $x\in \R^{4n}$. Then
\[
\begin{split}
    (\partial \partial_J \psi_\epsilon)(\alpha)&=\int_{A_\epsilon} \psi_\epsilon (y) f_\alpha(y)\,d\lambda(y)=\int_{A_\epsilon} \psi(\chi_{\epsilon,y})f_\alpha(y) \, d\lambda(y)\\
    &=\int_{A_\epsilon} \left(-\int_{A} \chi_{\epsilon,y}(x)\, d\mu(x)\right) f_\alpha(y)\, d\lambda(y)=-\int_{A} \left(\int_{A_\epsilon} \chi_{\epsilon}(y-x)f_\alpha(y)\, d\lambda(y) \right) \, d\mu(x)\\
    &=-\int_A \left(\int_{\mathbb{B}_\epsilon}\chi_\epsilon(y)f_\alpha(y+x)\,d\lambda(y)\right)\, d\mu(x)\,,
\end{split}
\]
where in the last equality we made a change of variables, we extended $f_\alpha=0$ outside $A$, observing that for $\epsilon>0$ small enough $d(\mathrm{Supp}(f_\alpha), \partial A)>\epsilon$, and we set $\mathbb{B}_\epsilon\subset \R^{4n}$ for the ball of radius $\epsilon$. Note that we also used the translation invariance of the Lebesgue measure. We then set $(\tau_yf_\alpha)(x):=f_\alpha(x+y)$ and similarly $(\tau_y\alpha)(x):=\alpha(x+y)$, and observe that $\tau_y\alpha$ (and $\tau_yf_\alpha$) has compact support in $A$ for any $y\in \mathbb{B}_\epsilon$ if $\epsilon>0$ is small enough. It follows that
\begin{align*}
    (\partial\partial_J\psi_\epsilon)(\alpha)&= \int_{\mathbb{B}_\epsilon}\chi_{\epsilon}(y)\left(-\int_A  f_{\alpha}(y+x)d\mu(x)\right) \, d\lambda(y)\\
    &= \int_{A_\epsilon}\chi_\epsilon(y) \psi(\tau_yf_\alpha) d\lambda(y)= \int_{A_\epsilon}\chi_{\epsilon}(y)(\partial \partial_J \psi)(\tau_y\alpha) d\lambda(y)\,.
\end{align*}
Since $\tau_y\alpha$ is q-positive if $\alpha$ is q-positive, from \eqref{eqn:2} we obtain that $\partial \partial_J \psi_\epsilon\geq 0$ as a current. To conclude the proof we can now proceed as in Proposition \ref{prop:1}. Namely, $\psi_\epsilon\in \QPSH(A_\epsilon) $ by \cite[Prop. 2.1.6]{Alesker 2003},  and we claim that the sequence $(\psi_\epsilon)_\epsilon$ is non-decreasing in $\epsilon$. Since $\psi_\epsilon $ is subharmonic, the convolution $\psi_\epsilon \star \chi_\tau $ is non-decreasing in $\tau$. It follows that $\psi_\epsilon \star \chi_\tau=\psi_\tau \star \chi_\epsilon $ is non-decreasing in $\epsilon$ and taking the limit as $\tau \to 0$ we deduce that $\psi_\epsilon$ must be non-decreasing as well (cf. Proposition \ref{prop_convolution}). Let $\phi$ be the limit of $\psi_\epsilon$ as $\epsilon \searrow 0$ and note that $\phi=\psi$ as distributions, so that $\phi$ cannot be identically $-\infty$. Thanks to Proposition \ref{3} we then deduce that $\phi\in \QPSH(A)$. In particular $\phi_j$ converges to $\phi$ weakly, i.e. against continuous functions with compact support in $A$.

In order to conclude the proof we need to show that $\phi_j \to \phi$ in $L^1_{\mathrm{loc}}(A)$. Fix a compact subset $K\subset A$, and a continuous function $\eta$ such that $\eta\vert_K=1$ and $0\leq \eta \leq 1$ on $A$, then we have
\begin{equation}\label{inH}
\int_{K} |\phi_j-\phi| \, d \lambda \leq  \int_{A} \eta (\phi_j\star \chi_\epsilon-\phi_j) \, d \lambda  + \int_{A} \eta |\phi_j\star \chi_\epsilon-\phi\star \chi_\epsilon | \, d \lambda + \int_{A} \eta (\phi \star \chi_\epsilon -\phi) \, d \lambda \,,
\end{equation}
where we used that $\phi_j \star \chi_\epsilon \geq \phi_j$ and $\phi \star \chi_\epsilon \geq \phi$ (see again Proposition \ref{prop_convolution}). Since we proved that the sequence $(\phi_j)_{j\in \N}$ is bounded in $L^1_{\mathrm{loc}}(A)$, for fixed $z\in A_\epsilon$, $r\in (0,\epsilon/2)$ and $x,y\in \mathbb{B}_r(z)$ we deduce
\[
|\phi_j \star \chi_\epsilon(x)-\phi_j \star \chi_\epsilon(y) | \leq \| \phi_j \|_{L^1({\mathbb{B}_r(z))}}\sup_{|t|\leq \epsilon }|\chi(x-t)-\chi(y-t)| \,.
\]
In particular, by Ascoli-Arzelà Theorem $\phi_j \star \chi_\epsilon \to \phi \star \chi_\epsilon $ locally uniformly on $A$ as $j\to +\infty$. As a consequence, taking the $\limsup$ as $j\to +\infty$ of the inequality \eqref{inH} we get
\[
\limsup_{j\to+ \infty}\int_{K} |\phi_j-\phi| \, d \lambda  \leq 2 \int_A \eta (\phi \star \chi_\epsilon-\phi) \, d\lambda \,,
\]
where we also used that $\phi_j$ converges to $\phi$ weakly. Taking the limit as $\epsilon \searrow 0$ concludes the proof, as the monotone convergence theorem guarantees that $\int_A \eta (\phi \star \chi_\epsilon - \phi) \, d \lambda \to 0$.
\end{proof}


\subsection{Further technical results}
In this subsection we prove two technical local results used in the proof of Theorem \ref{Thm:continuity}. 

\begin{lem}\label{Cor:Hartogs}
Let $A\subseteq \H^n$ be a domain. Let $(\phi_j)_{j\in \N} \subset \QPSH(A)\cap C^0(A)$ be a uniformly bounded sequence that converges pointwise to a positive function $\phi\in \QPSH(A)$. If there exist $c>0$ and $t>1$ such that
\[
\phi-t\phi_*<c
\]
on a compact set $K\subseteq A$, then there exists an index $j_0\in \N $ and open neighbourhood $U\subseteq A$ of $K$ such that
\[
\phi_j-t\phi<c
\]
on $U$, for all $j\geq j_0$.
\end{lem}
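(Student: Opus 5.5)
The plan is to run the classical Hartogs argument via sub-mean values, but pointwise in $x\in K$, and to use the lower semicontinuity of $\phi_*$ in place of the continuity of the comparison function. Set $M_0:=\sup_j\sup_A|\phi_j|$; since $\phi_j\to\phi$ pointwise, also $|\phi|\leq M_0$. The target inequality involves the function $t\phi+c$, which is only upper semicontinuous. So I will control it from below by $t\phi_*+c$, which is lower semicontinuous, and control $\phi_j$ from above by ball averages, which behave well under pointwise convergence.

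Fix $x\in K$ and choose $\delta>0$ with $\phi(x)-t\phi_*(x)<c-4\delta$.

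\emph{Step 1: lower bound for $t\phi$.} By lower semicontinuity of $\phi_*$ there is a neighbourhood $N_1$ of $x$ on which $t\phi_*(y)>t\phi_*(x)-\delta$. Since $\phi\geq\phi_*$, this gives $t\phi(y)>t\phi_*(x)-\delta$ on $N_1$.

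\emph{Step 2: upper bound for $\phi_j$.} Quaternionic psh functions are subharmonic on $\R^{4n}$. Hence for a small radius $r$ we have
\[
\phi_j(y)\leq \frac{1}{\lambda(\mathbb{B}_r)}\int_{\mathbb{B}_r(y)}\phi_j\,d\lambda
\]
for all $y$ close to $x$. Also, the averages of $\phi$ over $\mathbb{B}_r(x)$ converge to $\phi(x)$ as $r\to0$, so I choose $r$ such that this average is below $\phi(x)+\delta$.

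\emph{Step 3: uniform convergence of the averages.} By dominated convergence, using the uniform bound $M_0$ and pointwise convergence, the average of $\phi_j$ over $\mathbb{B}_r(y)$ converges to the average of $\phi$ over $\mathbb{B}_r(y)$. I need this convergence to be uniform in $y$ near $x$. This follows because all these averages are equi-Lipschitz in $y$: their variation is bounded by
\[
M_0\,\lambda\bigl(\mathbb{B}_r(y)\,\triangle\,\mathbb{B}_r(y')\bigr)/\lambda(\mathbb{B}_r)\leq C(r)\,M_0\,|y-y'|.
\]
A finite $\delta$-net argument on a small closed ball around $x$ then upgrades pointwise convergence in $y$ to uniform convergence.

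\emph{Step 4: conclusion at $x$.} Combining the above, there are a neighbourhood $N_x$ of $x$ and an index $j_x$ such that for $y\in N_x$ and $j\geq j_x$,
\[
\phi_j(y)\leq \phi(x)+3\delta<t\phi_*(x)+c-\delta<t\phi(y)+c.
\]

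\emph{Step 5: covering $K$.} By compactness, finitely many of the $N_x$ cover $K$. I take $U$ to be their union and $j_0$ to be the maximum of the corresponding indices $j_x$.

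The main obstacle is Step 3, the uniformity in $y$ of the convergence of the ball averages; it is handled by the equicontinuity estimate above. Note that neither the positivity of $\phi$ nor the continuity of the $\phi_j$ seems essential to this argument; only the uniform bound and subharmonicity are used.
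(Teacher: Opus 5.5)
Your proof is correct and follows essentially the same route as the paper. At each $x\in K$ you combine a lower bound for $t\phi$, coming from the lower semicontinuous regularisation, with a uniform upper bound for $\phi_j$ near $x$, and then cover $K$ by finitely many such neighbourhoods. The only difference is in the upper bound: the paper quotes Hartogs' lemma for subharmonic functions, applied on a small neighbourhood $V$ with the constant bound $\sup_V\phi$, whereas your Steps 2--3 reprove the needed special case directly via ball averages, dominated convergence and equicontinuity, which works here thanks to the uniform bound on the $\phi_j$.
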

As in \eqref{eqn:lsc_reg}, $\varphi_*$ denotes the lower semicontinuous regularisation of $\varphi$.
\begin{proof}
Take $x_0\in K$. Then there exists $0<c_1<c$ such that $
\phi(x_0)-t\phi_*(x_0)<c_1$. Since $\phi$ is upper semicontinuous there exists an open neighbourhood $V$ of $x_0$ such that
\begin{equation}\label{Hart1}
\phi(x)<t\phi_*(x_0)+c_1 \text{ for all }x\in V\,.
\end{equation}
On the other hand, by definition of lower semicontinuous regularisation, up to shrinking $V$, we may assume that there exists $c_1<c_2<c$ such that
\begin{equation}\label{Hart2}
t\phi_*(x_0)-c_2+c_1<t\inf_V \phi\,.
\end{equation}
Thus, combining \eqref{Hart1} and \eqref{Hart2} we obtain
\begin{equation}\label{Hart3}
\phi(x)<t\inf_V \phi + c_2 \text{ for all }x\in V\,.
\end{equation}
By Hartogs' Lemma for subharmonic functions (see e.g. \cite[Thm. 2.6.4]{Klimek}), for each $\epsilon >0$ and each compact neighbourhood $K_0 \subseteq V$ of $x_0$ there exists an index $j_0$ such that $\phi_j\leq \sup_V \phi +\epsilon $ on $K_0$ for any $j\geq j_0$. Choosing $\epsilon=c-c_2$ and recalling \eqref{Hart3} we obtain
\[
\phi_j(x)\leq \sup_V \phi +c-c_2<t\inf_V \phi + c \leq t\phi(x)+c
\]
for all $x\in K_0$ and $j\geq j_0$. Since $K$ is compact and the point $x_0\in K$ was chosen arbitrarily the inequality remains true on an open neighbourhood $U$ of $K$, up to increasing $j_0$.
\end{proof}

\begin{prop}\label{Prop:supCap}
Let $A\subseteq \H^n$ be a bounded domain with the standard HKT structure $\Omega_0$. Let $w \in \QPSH(A)\cap L^\infty(\bar A)$ and $\psi \in \QPSH(A)\cap C^0(\bar A)$ be such that
\[
\liminf_{A\ni x\to y}(w(x)-\psi(x)) \geq 0 \qquad \text{for any }y\in \partial A\,.
\]
If 
\[
(\partial \partial_J w)^n=h\,\Omega^n_0 \qquad \text{on }A\,,
\]
for some $h\in L^\infty(A)$, then for any $\alpha \in (0,1)$ there exists a constant $C>0$ such that
\[
\sup_A(\psi-w)\leq \epsilon +C \cdot \mathrm{Cap^{WK}}\bigl(\{\psi-w>\epsilon\},A\bigr)^{\alpha/n}\,,
\]
for any $\epsilon >0$.
\end{prop}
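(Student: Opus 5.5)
The plan is to mimic Ko\l odziej's local argument \cite{K98} (cf.\ \cite[Lem.~2.4]{EGZ}) in the relative setting of $A$, using the de Giorgi Lemma \ref{Lem:EGZ}. Fix $\epsilon>0$ and set
\[
f(s):=\mathrm{Cap^{WK}}\bigl(\{\psi-w>\epsilon+s\},A\bigr)^{1/n},\qquad s\geq 0 .
\]
This $f$ is decreasing and right-continuous by the continuity of capacity along increasing unions. It vanishes for $s\geq \sup_A(\psi-w)$, which is finite since $\psi$ is continuous on $\bar A$ and $w$ is bounded. Once we show $tf(s+t)\leq Bf(s)^{1+\alpha'}$ for $0\leq t\leq 1$, with $B$ and $\alpha'$ independent of $\epsilon$, Lemma \ref{Lem:EGZ} and the bound \eqref{eqn:S} (taking $s_0=0$, or handling separately the case $f(0)^{\alpha'}\geq (2B)^{-1}$, where the claimed inequality is trivial after enlarging $C$ by the a priori bound on $\sup(\psi-w)$) give $f(s)=0$ for $s\geq C f(0)^{\alpha'}$. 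Vanishing capacity then forces the set to have measure zero, hence by upper semicontinuity arguments $\sup_A(\psi-w)\leq \epsilon+Cf(0)^{\alpha'}$. Here $\alpha'$ is chosen so that $n\alpha'$ relates to $\alpha$; in the end we need the exponent $\alpha/n$ on $\mathrm{Cap^{WK}}$, i.e.\ $f(0)^{\alpha}$, so we take $\alpha'=\alpha$ in the volume–capacity estimate below.

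\textbf{Key step: the analogue of Lemma \ref{Lem_CapMA}.} Let $u\in\QPSH(A)$ with $0\leq u\leq 1$ be a competitor for the capacity of $E_{s+t}:=\{\psi-w>\epsilon+s+t\}$. Consider $v:=\psi+t(u-1)-\epsilon-s$ (after a harmless normalisation of $u$ making it a negative competitor if needed). Then
\[
E_{s+t}\subseteq\{w<v\}\subseteq E_s ,
\]
and on $\partial A$ we have $\liminf(w-v)\geq \epsilon+s>0$, so $\{w<v\}\Subset A$. The local comparison principle for bounded quaternionic psh functions (the local form of Propositions \ref{Prop:max_princ} and \ref{Prop:cps_principle}, valid via \cite{Wan-Kang,Wan-Zhang}) gives
\[
t^n\int_{E_{s+t}}(\partial\partial_J u)^n\leq\int_{\{w<v\}}(\partial\partial_J v)^n\leq\int_{\{w<v\}}(\partial\partial_J w)^n\leq \|h\|_\infty\,\lambda(E_s).
\]
Here we use $(\partial\partial_J v)^n\geq t^n(\partial\partial_J u)^n$, which follows from multilinearity and positivity since $\psi$ is quaternionic psh. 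Taking the supremum over $u$ yields $t^n f(s+t)^n\leq \|h\|_\infty\lambda(E_s)$.

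\textbf{Volume–capacity.} By \cite[Lem.~9]{Sro} (as in Lemma \ref{Lem:volcap}, now directly for the relative capacity on the bounded domain $A$), for every $\alpha\in(0,1)$ we have $\lambda(E)\leq C_\alpha\mathrm{Cap^{WK}}(E,A)^{1+\alpha}$. Hence $tf(s+t)\leq B f(s)^{1+\alpha}$ with $B=(\|h\|_\infty C_\alpha)^{1/n}$, uniform in $\epsilon$, and Lemma \ref{Lem:EGZ} gives $S\leq \frac{2B}{1-2^{-\alpha}}f(0)^{\alpha}$, i.e.\ $\sup(\psi-w)\leq \epsilon+C\,\mathrm{Cap^{WK}}(\{\psi-w>\epsilon\},A)^{\alpha/n}$.

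\textbf{Main obstacle.} The delicate point is the comparison principle on the relatively compact set $\{w<v\}$ in a domain rather than on a compact manifold, with boundary values controlled only through a $\liminf$. I would handle this by applying the local comparison principle of Wan--Kang (the quaternionic Bedford--Taylor comparison theorem for bounded functions with $\liminf_{\partial}(w-v)\geq0$). If needed, one first proves it for $v-\delta$ and then lets $\delta\to0$, approximating $w$ by decreasing smooth functions using Proposition \ref{prop_convolution} and Theorem \ref{thm:cont}. A secondary care point is checking that $\lambda(\{\psi-w>\epsilon+S\})=0$ really gives a pointwise bound; this should follow from the sub-mean value property of $\psi$ on small balls together with the fact that $w$ is upper semicontinuous, possibly after replacing $\epsilon$ by a slightly larger value.
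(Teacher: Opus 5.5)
Your proposal is correct and follows the paper's proof essentially step by step. It uses the same $f$, the same competitor $\psi+t(u-1)-\epsilon-s$ with the Wan--Kang comparison principle, Sroka's volume--capacity bound, and Lemma \ref{Lem:EGZ} split into the cases $f(0)^\alpha<(2B)^{-1}$ and $f(0)^\alpha\geq(2B)^{-1}$; in the second case your appeal to the a priori bound $\sup_{\bar A}\psi-\inf_A w$ makes the $\epsilon$-independence of $C$ more explicit than in the paper. Your ``secondary care point'' needs no sub-mean value argument: since $\psi$ is continuous and $w$ is upper semicontinuous, $\{\psi-w>\epsilon+S\}$ is open, and an open set of zero capacity has zero Lebesgue measure by the volume--capacity estimate, hence is empty.
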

\begin{proof}
For $s\geq 0$ define
\[
f(s):=\mathrm{Cap^{WK}}\bigl(\{w-\psi<-s-\epsilon\}, A\bigr)^{1/n}\,.
\]
The function $f$ is right-continuous, decreasing and satisfies $\lim_{s\to +\infty}f(s)=0$ (because $w$ and $\psi$ are bounded). Now, for any $t\in [0,1]$, the following inequality holds
\begin{equation}\label{ineq}
t^nf(s+t)^n\leq \int_{\{w-\psi<-s-\epsilon\}} (\partial \partial_J w)^n\,.
\end{equation}
The proof is similar to that of Lemma \ref{Lem_CapMA}. Take $u\in \QPSH(A)$ such that $0\leq u\leq 1$ and note the inclusions
\[
\{w-\psi<-s-t-\epsilon\} \subseteq \left\{ w -\psi< tu-s-t-\epsilon \right\} \subseteq \{w-\psi <-s-\epsilon\}\,.
\]
Thanks to these, the fact that $(\partial \partial_J (tu))^n\leq(\partial \partial_J (\psi+tu-s-t-\epsilon))^n $ and the comparison principle of \cite[Lem. 2.5]{Wan-Kang} we deduce
\[
\begin{split}
t^n\int_{\{w-\psi<-s-t-\epsilon\}}(\partial \partial_J u)^n&\leq \int_{\{w-\psi<-s-t-\epsilon\}}(\partial \partial_J (\psi+tu-s-t-\epsilon))^n\\
&\leq\int_{\{ w -\psi< tu-s-t-\epsilon\}}(\partial \partial_J (\psi+tu-s-t-\epsilon))^n\\
&\leq\int_{\{ w -\psi< tu-s-t-\epsilon\}}(\partial \partial_J w)^n \leq \int_{\{ w -\psi<-s-\epsilon\}}(\partial \partial_J w)^n \,.
\end{split}
\]
Taking the supremum over all $u$'s yields \eqref{ineq}. Now, by assumption $(\partial \partial_J w)^n=h\, \Omega^n_0$ and thus, for any $\alpha \in (0,1)$, thanks to \cite[Lem. 9]{Sro}, we have
\[
\begin{split}
t^nf(s+t)^n&\leq \int_{\{w-\psi<-s-\epsilon\}} h\,\Omega^n_0\leq \|h\|_{L^\infty} \int_{\{w-\psi<-s-\epsilon\}}\Omega^n_0\\
&\leq C_\alpha \|h\|_{L^\infty} \mathrm{Cap^{WK}} \bigl(\{w-\psi<-s-\epsilon\},A\bigr)^{1+\alpha}=C_\alpha \|h\|_{L^\infty}f(s)^{n(1+\alpha)} \,,
\end{split}
\]
where $C_\alpha>0$ is a constant only depending on $A, \Omega_0$ and $\alpha$. Hence, $f$ satisfies \eqref{eq_boundedness} with $B=(C_\alpha \|h\|_{L^\infty})^{1/n}$. We can now apply Lemma \ref{Lem:EGZ} and deduce that there exists $S>0$ such that 
\[
\sup_A (\psi-w)\leq S+\epsilon\,.
\]
If  $f(0)^\alpha\geq (2B)^{-1}$ we deduce immediately the desired inequality:
\[
\sup_A (\psi-w)\leq S+\epsilon\leq 2B S f(0)^\alpha+\epsilon=2B S \mathrm{Cap^{WK}}\bigl(\{w-\psi<-\epsilon\},A\bigr)^{\alpha/n}+\epsilon\,.
\]
If instead $f(0)^\alpha<(2B)^{-1}$ we can take $s_0=0$ in \eqref{eqn:S} and conclude 
\[
\sup_A (\psi-w)\leq S+\epsilon\leq \frac{2B}{1-2^{-\alpha}}\mathrm{Cap^{WK}}\bigl(\{w-\psi<-\epsilon\},A\bigr)^{\alpha/n}+\epsilon\,. \qedhere
\]
\end{proof}


\begin{thebibliography}{[99]}
		
\bibitem{Alesker 2003}
{\sc S. Alesker}, Non-commutative linear algebra and plurisubharmonic functions of quaternionic variables, {\em Bull. Sci. Math.}, {\bf 127}(1), 1--35, 2003.

\bibitem{Alesker}
{\sc S. Alesker}, Quaternionic Monge-Ampère equations. {\em J. Geom. Anal.} {\bf 13}, no. 2, 205--238, 2003.
		
\bibitem{Alesker 2012}
{\sc S. Alesker}, Pluripotential theory on quaternionic manifolds, {\em J. Geom. Phys.}, {\bf 62}(5), 1189--1206, 2012.
		
\bibitem{Alesker (2013)}
{\sc S. Alesker}, Solvability of the quaternionic Monge-Amp\`{e}re equation on compact manifolds with a flat hyperK\"{a}hler metric, {\em Adv. Math.}, {\bf 241}, 192--219, 2013.
		
		
\bibitem{Alesker-Shelukhin (2017)}
{\sc S. Alesker, E. Shelukhin}, A uniform estimate for general quaternionic Calabi problem (with appendix by Daniel Barlet), {\em Adv. Math.}, {\bf 316}, 1--52, 2017.
		
\bibitem{Alesker-Verbitsky (2006)}
{\sc S. Alesker, M. Verbitsky}, Plurisubharmonic functions on hypercomplex manifolds and HKT-geometry, {\em J. Geom. Anal.}, {\bf 16}, 375--399, 2006.
		
\bibitem{Alesker-Verbitsky (2010)}
{\sc S. Alesker, M. Verbitsky}, Quaternionic Monge-Amp\`{e}re equations and Calabi problem for HKT-manifolds, {\em Israel J. Math.}, {\bf 176}, 109--138, 2010.


		
		
\bibitem{Banos}
{\sc B. Banos, A. Swann}, Potentials for Hyper-K\"ahler Metrics with Torsion, {\em Classical and Quantum Gravity}, {\bf 21}(13), 3127--3135, 2004.
		
\bibitem{Barberis-Fino}
{\sc M. L. Barberis, A. Fino}, New HKT manifolds arising from quaternionic representations, {\em Math. Z.}, {\bf 267}, 717--735, 2011.

\bibitem{Bedford-Taylor 1976}
{\sc E. Bedford, B. A. Taylor}, The Dirichlet problem for a complex Monge–Amp\`{e}re equation, {\em Invent. Math.}, {\bf 37}(1), 1--44, 1976.

\bibitem{Bedford-Taylor 1982}
{\sc E. Bedford, B. A. Taylor}, A new capacity for plurisubharmonic functions, {\em Acta Math.}, {\bf 149}, no. 1-2, 1--40, 1982.

\bibitem{BGV}
{\sc L. Bedulli, G. Gentili, L. Vezzoni}, The parabolic quaternionic Calabi-Yau equation on hyperkähler manifolds. {\em Rev. Mat. Iberoam.} {\bf 40} (2024), no. 6, 2291--2310.

\bibitem{Berman-Boucksom}
{\sc R. J. Berman, S. Boucksom}, Growth of balls of holomorphic sections and energy at equilibrium, {\em Invent. Math.}, {\bf 181} (2), 337--394, 2010.


\bibitem{Berman-Boucksom-Guedj-Zeriahi}
{\sc R. J. Berman, S. Boucksom, V. Guedj, A. Zeriahi}, A variational approach to complex Monge-Amp\`{e}re equations, {\em Publ. Math. Inst. Hautes \'{E}tudes Sci.} {\bf 117}, 179--245, 2013.

		
%
%
%

\bibitem{Blocki}
{\sc Z. B\l ocki}, Uniqueness and stability for the complex Monge-Ampère equation on compact K\"ahler manifolds, {\em Indiana Univ. Math. J.} {\bf 52} (2003), no. 6, 1697--1701.

\bibitem{Blocki-Kolodziej}
{\sc Z. B\l ocki and S. Ko\l odziej}, On regularization of plurisubharmonic functions on manifolds, {\em Proc. Amer. Math. Soc.} {\bf 135}, no. 7, 2089--2093, 2007.

%
		
\bibitem{Calabi}
{\sc E. Calabi}, On K\"ahler manifolds with vanishing canonical class, Algebraic geometry and topology. A symposium in honor of S. Lefschetz, 78--89. Princeton University Press, Princeton, N. J., 1957.
		

		


\bibitem{Cho-Choi}
{\sc Y.-W. Cho, Y.-J. Choi}, Continuity of solutions to complex Monge-Ampère equations on compact K\"ahler spaces. {\em Math. Ann.} {\bf 393}, no. 1, 807--830, 2025.

		
%
%
%

%
%

\bibitem{Dinew}
{\sc S. Dinew}, Uniqueness in $\mathscr{E}(X,\omega) $, {\em J. Funct. Anal.} {\bf 256} no. 7, 2113--2122, 2009.

\bibitem{DinewSroka}
{\sc S. Dinew, M. Sroka}, On the Alesker-Verbitsky conjecture on hyperK\"ahler manifolds, {\em Geom. Funct. Anal.} {\bf 33}, no. 4, 875--911, 2023.
		
\bibitem{dotti-fino1}
{\sc I. Dotti, A. Fino}, Abelian hypercomplex 8-dimensional nilmanifolds, {\em Ann. Glob. Anal. and Geom.} {\bf 18}, 47--59, 2000.
		
\bibitem{dotti-fino2}
{\sc I. Dotti, A. Fino}, Hyperk\"ahler torsion structures invariant by nilpotent Lie groups, {\em Classical Quantum Gravity} {\bf 19}, 551--562, 2002.

\bibitem{EGZ}
{\sc P. Eyssidieux, V. Guedj, A. Zeriahi}, Singular K\"ahler-Einstein metrics, {\em J. Amer. Math. Soc.} {\bf 22}, no. 3, 607--639, 2009.
		
%
%
		
\bibitem{GF}
{\sc A. Fino, G. Grantcharov}, Properties of manifolds with skew-symmetric torsion and special holonomy, {\em Adv. Math.} {\bf 189}, no. 2, 439--450, 2004.

\bibitem{GL}
{\sc G. Gentili, M. Lejmi}, On balanced HKT manifolds, {\em New York J. Math.} \textbf{31}, 1118--1139, 2025.


\bibitem{GentiliVezzoni}
{\sc G. Gentili, L. Vezzoni}, The quaternionic Calabi conjecture on abelian hypercomplex nilmanifolds viewed as tori fibrations. {\em Int. Math. Res. Not. IMRN} {\bf 2022}, no. 12, 9499--9528.
		
\bibitem{GV}
{\sc G. Gentili, L. Vezzoni}, A remark on the quaternionic Monge-Amp\`{e}re equation on foliated manifolds, {\em Proc. Amer. Math. Soc.} {\bf 151}, 1263--1275, 2023.

\bibitem{GV2}
{\sc G. Gentili, L. Vezzoni}, A remark on the second order estimates for the quaternionic Calabi-Yau problem on hyperk\"ahler manifolds, {\em Complex Manifolds} {\bf 13}, no. 1, article no. 20250020, 2026.

%

\bibitem{GLV}{\sc G. Grantcharov, M. Lejmi, M. Verbitsky},
Existence of HKT metrics on hypercomplex manifolds of real dimension 8, {\em Adv. Math.} {\bf 320}, 1135--1157, 2017.
		
\bibitem{Grantcharov-Poon (2000)}
{\sc G. Grantcharov, Y. S. Poon}, Geometry of hyperK\"{a}hler connections with torsion, {\em Comm. Math. Phys.} {\bf 213}(1), 19--37, 2000.
		


%
%
%
	
\bibitem{Guedj-Zeriahi 2005}
{\sc V. Guedj, A. Zeriahi}, Intrinsic capacities on compact K\"ahler manifolds, {\em J. Geom. Anal.}, {\bf 15}, no. 4, 607--639, 2005.

\bibitem{Guedj-Zeriahi 2007}
{\sc V. Guedj, A. Zeriahi}, The weighted Monge-Amp\`{e}re energy of quasiplurisubharmonic functions, {\em J. Funct. Anal.}, {\bf 250}, 442--482, 2007.

\bibitem{Guedj-Zeriahi book}
{\sc V. Guedj, A. Zeriahi}, Degenerate complex Monge-Ampère equations, EMS Tracts Math., 26 European Mathematical Society (EMS), Z\"urich, 2017.

%
%
%
%
		
\bibitem{Howe-Papadopoulos (1996)}
{\sc P. S. Howe, G. Papadopoulos}, Twistor spaces for hyper-K\"{a}hler manifolds with torsion, {\em Phys. Lett. B.} {\bf 379}, 80--86, 1996.
		
%
\bibitem{Ivanov}
{\sc S. Ivanov, A. Petkov}, HKT manifolds with holonomy ${\rm SL}(n,\H)$, {\em Int. Math. Res. Not. IMRN} {\bf 16}, 3779--3799, 2012.  
		


\bibitem{Kato}
{\sc M. Kato}, Compact differentiable $ 4 $-folds with quaternionic structures. {\em  Math. Ann.}, {\bf 248}, no. 1, 79--96, 1980. Erratum in {\em  Math. Ann.}, {\bf 283}, no. 2, 352, 1989.

\bibitem{Klimek}
{\sc M. Klimek}, Pluripotential theory.
{\em London Math. Soc. Monogr. (N.S.), 6
Oxford Sci. Publ.} The Clarendon Press, Oxford University Press, New York, 1991. xiv+266 pp.
		

\bibitem{K98}
{\sc S. Ko\l{}odziej}, The complex Monge-Amp\`{e}re equation, {\em Acta Math.}, {\bf 180}, no. 1, 69--117, 1998.

\bibitem{Kolodziej}
{\sc S. Ko\l{}odziej}, The Monge-Amp\`{e}re Equation on Compact K\"ahler Manifolds, {\em Indiana Univ. Math. J.}, {\bf 52}, no. 3, 667--686, 2003.

%
		
\bibitem{Lejmi-Weber}
{\sc M. Lejmi, P. Weber}, Quaternionic Bott–Chern Cohomology and existence of HKT metrics, {\em Q. J. Math.} {\bf 68} (3), 705--728, 2017.
		


\bibitem{Lu-Nguyen}
{\sc H. C. Lu, V. D. Nguyen}, Degenerate complex Hessian equations on compact K\"ahler manifolds, {\em Indiana Univ. Math. J.} {\bf 64}, 1721--1745, 2015.
%
%
		
\bibitem{Obata (1956)}
{\sc M. Obata}, Affine connections on manifolds with almost complex, quaternionic or Hermitian structures, {\em Japan. J. Math.} {\bf 26}, 43--79, 1956.
		
%


%
%




\bibitem{Sommese}
{\sc A. Sommese}, Quaternionic Manifolds, {\em Math. Ann.} {\bf 212}, 191--214, 1975.

		
\bibitem{Sro}
{ \sc M. Sroka}, Weak solutions to the quaternionic Monge-Amp\`ere equation, {\em Anal. PDE} {\bf 13}(6), 1755--1776, 2020.
		
\bibitem{Sroka}
{\sc M. Sroka}, The $ C^0 $ estimate for the quaternionic Calabi conjecture, {\em Adv. Math.} {\bf 370}, 107237, 2020.

\bibitem{Sroka24}
{\sc M. Sroka}, Sharp uniform bound for the quaternionic Monge–Ampère equation on hyperhermitian manifolds. {\em Calc. Var. Partial Differential Equations} {\bf63} (2024), no. 4, article no. 102, 14 pp.
		
%
		
\bibitem{Swann}
{\sc A. Swann}, Twisting Hermitian and hypercomplex geometries, {\em Duke Math. J.} {\bf 155}, no. 2, 403--431, 2010. 
%
%
%
%
%
%
%
%
		
\bibitem{Verbitsky (2002)}
{\sc M. Verbitsky}, HyperK\"{a}hler manifolds with torsion, supersymmetry and Hodge theory, {\em Asian J. Math.} {\bf 6}(4), 679--712, 2002.
		
\bibitem{Verbitsky (2007)}
{\sc M. Verbitsky},	Hypercomplex manifolds with trivial canonical bundle and their holonomy. {\em Moscow Seminar on Mathematical Physics.} II, 203--211, { \em  Amer. Math. Soc. Transl. Ser. 2}, {\bf 221}, Adv. Math. Sci., {\bf 60}, Amer. Math. Soc., Providence, RI, 2007.

\bibitem{Verbitsky (2009)}
{\sc M. Verbitsky}, Balanced HKT metrics and strong HKT metrics on hypercomplex manifolds, {\em Math. Res. Lett.} {\bf 16}, no. 4, 735--752, 2009.

\bibitem{Verbitsky (2010)}
{\sc M. Verbitsky}, Positive forms on hyperk\"ahler manifolds, {\em Osaka J. Math.} {\bf 47}, no. 2, 353--384, 2010.

\bibitem{Wan 2017}
{\sc D. Wan}, The continuity and range of the quaternionic Monge-Amp\`{e}re operator on quaternionic space, {\em Math. Z.} {\bf 285} no. 1-2, 461--478, 2017. 

\bibitem{Wan 2019}
{\sc D. Wan}, Quaternionic Monge-Amp\`{e}re operator for unbounded plurisubharmonic functions, {\em Ann. Mat. Pura Appl.} (4) {\bf 198}, no. 2, 381--398, 2019.

\bibitem{Wan 2019b}
{\sc D. Wan}, The domain of definition of the quaternionic Monge-Amp\`{e}re operator, {\em Math. Nachr.} {\bf 292} no. 5, 1161--1173, 2019. 

\bibitem{Wan 2020}
{\sc D. Wan}, A variational approach to the quaternionic Monge-Amp\`{e}re equation, {\em Ann. Mat. Pura Appl.} (4) {\bf 199}, no. 6, 2125--2150, 2020.

\bibitem{Wan-Kang}
{\sc D. Wan, Q. Kang}, Potential theory for quaternionic plurisubharmonic functions, {\em Mich. Math. J.} {\bf 66}, 3--20, 2017.

\bibitem{Wan-Wang}
{\sc D. Wan, W. Wang}, On the quaternionic Monge-Amp\`{e}re operator, closed positive currents and Lelong-Jensen type formula on the quaternionic space, {\em Bull. Sci. Math.}, {\bf 141}, 267--311, 2017.

\bibitem{Wan-Zhang}
{\sc D. Wan, W. Zhang}, Quasicontinuity and maximality of quaternionic plurisubharmonic functions, {\em J. Math. Anal. Appl.} {\bf 424}, no. 1, 86--103, 2015.

\bibitem{Wang (2021)}
{\sc W. Wang}, The quaternionic Monge-Amp\`{e}re operator and plurisubharmonic functions on the Heisenberg group, {\em Math. Z.} {\bf 298}, no. 1-2, 521--549, 2021.
		
\bibitem{Yau}
 {\sc S.-T. Yau}, On the Ricci curvature of a compact K\"ahler manifold and the complex Monge-Amp\`ere equation. I. {\em Comm. Pure Appl. Math.} {\bf 31}, no. 3, 339--411, 1978.
		

		
		
\end{thebibliography}
\end{document}